\newtcbox{\mybox}[1][]{enhanced, colframe=black, colback=orange!45, 
       nobeforeafter, tcbox raise base, shrink tight, extrude by=1mm, #1}
\newcommand{\bs}{\boldsymbol}
\newcommand{\beqn}{\begin{equation}}
\newcommand{\eeqn}{\end{equation}}
\newtheorem{remark}{Remark}[section]
\newtheorem{corollary}{Corollary}[section]
\newtheorem{theorem}{Theorem}[section]
\newtheorem{proposition}[theorem]{Proposition}
\def\blackbox{\leavevmode\vrule height 5pt width 4pt depth 0pt\relax}
\newenvironment{proof}{\begin{trivlist}
\item[]\hspace{0cm}{\bf Proof:}
\hspace{0cm} }{\hfill $\blackbox$
\end{trivlist}}
\def\R{{\mathbb R}}
\definecolor{gray1}{gray}{0.25}
\newcommand\restr[2]{{
\left.\kern-\nulldelimiterspace 
#1 
\right|_{#2} 
}}
\newcommand{\mK}{\mathsf{K}}
\newcommand{\mW}{\mathsf{W}}
\def\R{{\mathbb R}}
\newcommand{\norm}[1]{\Vert{#1}\Vert}	
\renewcommand{\v}{\ensuremath{\boldsymbol}}	
\begin{document}
\title{A Recipe for Learning Variably Scaled Kernels via Discontinuous Neural Networks}

\author{
	G. Audone$^{\dagger,}$\thanks{Dipartimento di Scienze Matematiche \lq\lq Giuseppe Luigi Lagrange\rq\rq, Politecnico di Torino, Italy \\
    $^{\dagger}$Gruppo Nazionale per il Calcolo Scientifico INdAM, Italy}
	\quad F. Della Santa$^{\dagger,*}$
	\quad E. Perracchione$^{\dagger,*}$
    \quad S. Pieraccini$^{\dagger,*}$\\ 
{\tt gianluca.audone@polito.it; francesco.dellasanta@polito.it}\\ {\tt emma.perracchione@polito.it; sandra.pieraccini@polito.it}
}		

\maketitle

\section*{Abstract}
The efficacy of interpolating via Variably Scaled Kernels (VSKs) is known to be dependent on the definition of a \emph{proper} scaling function, but no numerical recipes to construct it are available. Previous works suggest that such a function should mimic the target one, but no theoretical evidence is provided. This paper fills both the gaps: it proves that a scaling function reflecting the target one may lead to enhanced approximation accuracy, and it provides a user-independent tool for learning the scaling function by means of Discontinuous Neural Networks ($\delta$NN), i.e., NNs able to deal with possible discontinuities. Numerical evidence supports our claims, as it shows that the key features of the target function can be clearly recovered in the learned scaling function.

\section{Introduction}

    Variably Scaled Kernels (VSKs) were introduced in 2015 \cite{Bozzini2015199} with the main purpose of improving the stability of kernel interpolants thanks to the definition of a scaling function which was originally thought of as a continuous version of the shape parameter, whose value affects accuracy and stability indicators (see e.g. \cite{driscoll2002interpolation,fornberg2004stable,ling2022stochastic,marchetti2021extension,rippa1999algorithm}). Later works show that the use of VSKs might be valuable when the target function of the scattered data interpolation problem is characterized by discontinuities. Indeed in such cases, defining a scaling function with the same key features of the target one, e.g. the same discontinuities, results in a VSK basis sharing the same discontinuities with the target; e.g., we refer the reader to \cite{vskmpi,romani,rossini_drna}. This fact suggests using VSKs with scaling functions which somehow mimic the target ones as, intuitively, this allows to directly insert in the kernel basis itself some prior information about the sought solution, and especially in applied fields, this turns out to be a relevant issue \cite{Perracchione2023}. Nevertheless, this intuition lacks both theoretical and numerical evidence.

    This work addresses these issues by providing theoretical justification for the claim, demonstrating by means of  the Lebesgue functions that a scaling function reflecting the behaviour of the target may lead to enhanced approximation accuracy. As a further confirmation of our claim, we propose a user-independent way of choosing the scaling function by training a Discontinuous Neural Network ($\delta$NN) able to learn also possibly discontinuous scaling functions directly from data \cite{DellaSanta2023}. Our results confirm the theoretical findings: the learned scaling function closely resembles the target function. This data-driven approach, namely in what follows $\delta$NN-VSKs, offers a user-independent and adaptive solution for meshfree approximation tasks. 

    The paper is organised as follows. In Section \ref{notation} we recall the basics of kernel-based approximation. Section \ref{vskdelta} is the core of the paper presenting the theory and the numeric intuition behind the $\delta$NN-VSKs. Experiments and conclusions are respectively offered in Sections \ref{esperimenti} and \ref{Conclusion}.

\section{Brief review of kernel-based interpolation}
\label{notation}

    We consider symmetric reproducing kernels $\kappa: \Omega \times  \Omega \longrightarrow \mathbb{R}$ for $$H_{\kappa} = \mathrm{span} \left\{ \kappa(\cdot,\boldsymbol{x}), \hskip 0.2cm \boldsymbol{x} \in \Omega\subset\mathbb{R}^d \right\},$$ which is a pre-Hilbert space equipped with an inner product $\left(\cdot,\cdot\right)_{H_{\kappa} }$. Those are kernels so that (refer e.g. to \cite[Definition 2.6, p. 32]{Fasshauer15})  $\kappa(\boldsymbol{x}, \boldsymbol{z}) = \kappa(\boldsymbol{z}, \boldsymbol{x})$, $\boldsymbol{x},\boldsymbol{z} \in \Omega$, with $\kappa(\cdot, \boldsymbol{x}) \in H_{\kappa}$, and 
    $$
    \left(f,\kappa(\cdot, \boldsymbol{x})\right)_{H_{\kappa} } = f(\boldsymbol{x}), \quad \boldsymbol{x}\in \Omega.
    $$
    The native space ${\cal	N}_{\kappa}$ of the kernel $\kappa$ is then defined as the completion of $H_\kappa$ with respect to the norm $||\cdot||_{H_{\kappa}} = \sqrt{(\cdot, \cdot)_{H_{\kappa}}}$, and hence for all $f \in H_\kappa$ we have that $||f||_{{\cal	N}_{\kappa}}= ||f||_{H_{\kappa}}$; here and throughout all this section, we refer the reader to the monographs \cite{Fasshauer15,Wendland05} for more details. 

    A common subclass of reproducing kernels, which is frequently used in the framework of scattered data interpolation problems, is given by radial kernels. To any radial kernel, we are able to uniquely associate a Radial Basis Function (RBF) $ \phi: \mathbb{R}_{+} \longrightarrow\mathbb{R}$, where $\mathbb{R}_{+}= [0,+\infty)$, and (possibly) a shape parameter $\varepsilon>0$ such that, for all $\boldsymbol{x},\boldsymbol{z}
    \in \Omega$,
    \begin{equation*} 
        \kappa(\boldsymbol{x},\boldsymbol{z})= \kappa_{\varepsilon}(\boldsymbol{x},\boldsymbol{z})=  \phi( \varepsilon ||\boldsymbol{x}-\boldsymbol{z}||_2)=\phi_{\varepsilon}( ||\boldsymbol{x}-\boldsymbol{z}||_2)=  \phi(r),
    \end{equation*}
    where $r=||\boldsymbol{x}-\boldsymbol{z}||_2$. For the sake of notation simplicity, we may omit the dependence of the kernel on the shape parameter, which is also referred to as scale parameter in the machine learning literature.

    In order to formally introduce the scattered data interpolation problem, we consider a function $f:\Omega\longrightarrow\mathbb{R}$, with $\Omega\subset\mathbb{R}^d$, and an associated set of {functional} values ${F}_n=\{ f(\boldsymbol{x_i})\}_{i=1}^{n}$ sampled at a data nodes set  ${{X}_n=\{\boldsymbol{x}_i\}_{i=1}^{n}\subset \Omega}$.   Given these sets, the goal consists in computing an approximation of the unknown function $f$, called $P_f$, by imposing the $n$ interpolation constraints, i.e., 
    \begin{equation}\label{interp_cond}
    P_{f}(\boldsymbol{x}_i) = f(\boldsymbol{x_i}), \quad i= 1,\ldots, n.
    \end{equation}
    Assuming that the interpolating function $P_f$ can be written as
    \begin{equation}
        \label{eq1}
        P_f\left( \boldsymbol{x}\right)= \sum_{k=1}^{n} c_k \kappa \left( \boldsymbol{x} , \boldsymbol{x}_k  \right), \quad \boldsymbol{x} \in \Omega,
    \end{equation}
    the coefficients $\{c_i\}_{i=1}^{n}$ are the solution of the linear system 
    $$\mK \boldsymbol{c} = \boldsymbol{f},$$
    where $  \boldsymbol{c}= \left(c_1, \ldots,
    c_n\right)^{\intercal}$, $  \boldsymbol{f} =\left(f_1, \ldots , f_n\right)^{\intercal}$, being $f_i = f(\boldsymbol{x}_i)$, and $
    \mK_{ik}= \kappa \left( \boldsymbol{x}_i , \boldsymbol{x}_k  \right),$ $i,k=1, \ldots, n$.
    Letting $\kappa$ be a symmetric and strictly positive definite kernel, the collocation matrix is positive definite, and equivalently to \eqref{eq1}, we may write the nodal form of the interpolant as  
    \begin{equation*}
        P_f\left( \boldsymbol{x}\right)=  \boldsymbol{\kappa}^{\intercal}({\boldsymbol{x}} ) \mK^{-1} \boldsymbol{f}, \quad \boldsymbol{x} \in \Omega,
    \end{equation*}
    where $$
    \boldsymbol{\kappa}^{\intercal}({\boldsymbol{x}} ) = \left(\kappa(\boldsymbol{x},\boldsymbol{x}_1), \ldots,\kappa (\boldsymbol{x},\boldsymbol{x}_n) \right).$$

    {Even if here we focus on interpolation, we point out that the kernel theory is not limited to interpolation, indeed the Ansatz conditions could be generalized to any linearly independent set of continuous linear functionals $\{ \chi_1, \ldots, \chi_n \}$, so that equation \eqref{interp_cond} becomes $\chi_i P_{f} = \chi_i f,$ $i= 1,\ldots, n.$}
    
\section{Investigating and learning the scaling function for VSKs}
\label{vskdelta}

    The idea behind VSKs, first introduced in the interpolation context in \cite{Bozzini2015199} and later adapted to least squares approximation \cite{Esfahani202338},  is to introduce a new kernel basis. {This basis can be interpreted as a kind of feature augmentation strategy. Specifically, the original aim was to replace the shape parameter of the basis function with a continuous shape function. The authors of \cite{Bozzini2015199} demonstrated that this approach is equivalent to defining a standard kernel in a higher-dimensional space—effectively adding features via a scaling function denoted here by $\bar{f}$—and then projecting it back onto the original dimension.}

    {More formally, let} $\Sigma \subset  \mathbb{R}^m$, $m>0, m \in \mathbb{N}$, and let $\kappa: \tilde{\Omega}\times \tilde{\Omega}\longrightarrow\R$,  {$\tilde{\Omega}=\Omega \times \Sigma \subset  \mathbb{R}^{d+m}$}, be a continuous radial positive definite kernel. Given a scaling function $\bar{f}: \Omega \longrightarrow \Sigma,$ a VSK $\kappa_{\bar{f}}: \Omega  \times \Omega \longrightarrow \mathbb{R}$ is defined as
    \begin{equation*}
        \kappa_{\bar{f}}(\bs{{x}},\bs{{z}})=\kappa\left((\bs{{x}},\bar{f}(\bs{x})),(\bs{z},\bar{f}(\bs{z}))\right)=:\kappa(\tilde{\bs{{x}}},\tilde{\bs{{z}}}),
    \end{equation*}
    for $\bs{x},\bs{z}\in\Omega$. Then, the VSK interpolant is simply given by $P^{{\bar{f}}}_f\left({\bs{x}}\right) := P_f\left(\tilde{\bs{{x}}}\right)$,
    or, equivalently, 
    \begin{equation}\label{eq:VSK_interpol}
         P^{{\bar{f}}}_f\left({\bs{x}}\right) :=   \sum_{k=1}^{n} c_k \kappa_{{\bar{f}}}( {\bs{x}},{\bs{x}}_k)  = \sum_{k=1}^{n} c_k \kappa(\tilde{\bs{{x}}},\tilde{\bs{{x}}}_k).
    \end{equation}

    {As radial kernels only depend on the distance of the nodes}, the VSK interpolant turns out to be easy to implement. Indeed, the coefficients $c_1, \ldots, c_n$ of the interpolant $P_f^{{\bar{f}}}$ are computed by solving
    \begin{equation}\label{sistema}
          \underbrace{\begin{pmatrix}
	       \kappa(\tilde{\bs{x}}_1,\tilde{\bs{x}}_1) & \cdots & \kappa(\tilde{\bs{x}}_1,\tilde{\bs{x}}_n)\\
	       \vdots & & \vdots\\
	       \kappa(\tilde{\bs{x}}_n,\tilde{\bs{x}}_1) & \cdots & \kappa( \tilde{\bs{x}}_n,\tilde{\bs{x}}_n)\\ 
	\end{pmatrix}}_{\mK_{{\bar{f}}}}
    \begin{pmatrix}
        c_1 \\ \vdots \\ c_n 
    \end{pmatrix}
        =
    \begin{pmatrix}
        f_1  \\ \vdots \\ f_n
    \end{pmatrix}.
    \end{equation} 
    {In the discrete setting, the approximant is then evaluated at $N$ points denoted by $\Xi = \{\bs{\xi}_k\}_{k=1}^{N}$. Practically, we consider $m=1$, and hence the VSK interpolant, after defining the augmented sets of nodes 
    \begin{equation}\label{augmented}
        \{\tilde{\bs{x}}_i\}_{i=1}^n = \{(\bs{x}_i, \bar{f}(\bs{x}_i))\}_{i=1}^n, \quad \textrm{and} \quad \{\tilde{\bs{\xi}_k}\}_{k=1}^{N} = \{(\bs{\xi}_k, \bar{f}(\bs{\xi}_k))\}_{k=1}^{N} \subset \mathbb{R}^{d+1}, 
    \end{equation}
    and computing the coefficients as in \eqref{sistema}, is defined as the projection on $\mathbb{R}^d$ of a standard interpolant in $\mathbb{R}^{d+1}$, i.e,  
    \begin{equation}
    \label{interpolant}
        P_{f}^{\bar{f}}(\bs{\xi}_k) = P_{f} (\tilde{\bs{\xi}}_k) = \sum_{i=1}^n c_i \kappa(\tilde{\bs{\xi}}_k, \tilde{\bs{x}}_i),
    \end{equation}
    with $k=1,\ldots,N$.}

    {The advantages of using VSKs, i.e., their easy implementation and the fact that we can superimpose some prior information, are briefly summarized in the following remark.}

    \begin{remark}\label{remark0}
        Because of their easy implementation, VSKs have already been used in many applications whose goals consist in recovering functions with steep gradients,  high oscillations and/or discontinuities. In case of discontinuities, intuitively, if the scaling function ${\bar f}$ is chosen so that it has discontinuities at the same edges of $f$, so is for the radial kernel $\kappa_{\bar f}$ and hence for $P_f^{\bar{f}}$. We refer the reader to \cite{vskmpi} for further details on the characterization of native spaces induced by the discontinuous kernels. 
    \end{remark}

    Hence, following the intuition of Remark \ref{remark0}, we are interested in proposing bounds that highlight the dependence of the error on the scaling function (see Subsection \ref{sec:theoretic_res}) and in providing an algorithm to automatically define such a scaling function (see Subsection \ref{sec:delta}). {In order to make the presentation clearer we want to stress the fact that the scheme used to find the nearly optimal scaling function $\bar{f}$, based on the so-called $\delta$NNs, could be thought of as an independent step of the analysis carried out in the next subsection. Indeed, the use of $\delta$NNs is primarily due to numerically support the claim that the scaling function should resemble the target one, even if we do not have theoretical evidence about the fact that such choice is definitely the best one. Nevertheless, independently of the results reported in the next subsection, our $\delta$NN algorithm returns a learned function $\bar{f}$ that definitely has the same key features of the target.}

    {Before going into details and in order to set our final goal, we summarize, in Flowchart 1, the main steps needed to compute the VSK interpolant (well-known in literature already), with the difference that here the scaling function will be no longer an input of the scheme but the result of the $\delta$NN optimization procedure.}

    \begin{figure}[ht]
    \centering
    \begin{tikzpicture}[
      node distance=2.8cm and 2.5cm,
      >=Latex,
      every node/.style={align=center, font=\small},
      arrow/.style={draw, ->, thick},
      inputnode/.style={draw, fill=blue!20, circle, minimum size=2cm},
      processnode/.style={draw, fill=green!20, rectangle, minimum height=1.2cm, minimum width=2.5cm},
      optionalnode/.style={draw, fill=orange!20, rectangle, minimum height=1.2cm, minimum width=2.5cm},
      outputnode/.style={draw, fill=red!20,  rectangle, minimum height=1.2cm, minimum width=2.5cm}
        ]

    \node[inputnode] (input) at (0, 0) {\textbf{Inputs}:\\
        Nodes \\Functional values \\ Evaluation points \\ Kernel\\
    };

    \node[processnode, right=of input] (augmented) {Define the augmented sets \\of points as in \eqref{augmented}};

    \node[processnode, below=of augmented] (matrix) {Define the VSK matrix, and \\
    compute the coefficients as in \eqref{sistema}};

    \node[outputnode] (interpolant) at (0, -4) {\textbf{Outputs}:\\
    Evaluation of the VSK \\interpolant as in \eqref{interpolant}};

    \draw[arrow, bend left=60] (input) to node[above]  {\mybox[extrude bottom by=0.1cm]{Define the scaling function}
    }
    (augmented);

    \draw[arrow] (augmented) -- (matrix);
    \draw[arrow] (matrix) -- (interpolant);

    \end{tikzpicture}
    \captionsetup{labelformat=empty, name=Diagramma} 
    \caption{\textbf{Flowchart 1:} {Diagram for the VSK-based algorithm.}}
    \end{figure}
    \setcounter{figure}{0}

\subsection{VSKs: investigating the scaling function via Lagrange bases}\label{sec:theoretic_res}

    Following the insights of Remark \ref{remark0}, the authors of previous papers (e.g., see \cite{vskmpi,rossini_drna,Perracchione2023}) claim that if the scaling function ${\bar{f}}$ \emph{somehow mimics} the target ${{f}}$ or some of its key features, then the VSK interpolant outperforms the standard one. Nevertheless, no theoretical evidence has been provided and no numerical tests were conducted to determine the \emph{optimal} scaling function. For the first item, we point out that for any set of distinct points $\{\boldsymbol{x}_i\}_{i=1}^n$, there exist functions (known as Lagrange or cardinal bases) $\varphi^{\bar{f}}_j \in \textrm{span} \{\kappa_{\bar{f}}(\cdot,{\boldsymbol{x}}_j), j=1,\ldots,n\}$ so that the VSK interpolant can be written as
    \begin{align*}
           P_{f}^{\bar{f}}(\boldsymbol{x}) = \sum_{j=1}^n f_j \varphi_{j}^{\bar{f}}(\boldsymbol{x}),
    \end{align*}
    being $\varphi_{j}^{\bar{f}}(\boldsymbol{x}):=\varphi_{j}(\tilde{\boldsymbol{x}})$. Such functions are computed by solving the linear system 
$$
\mK_f \tilde{\boldsymbol{\varphi}}=\tilde{\boldsymbol{\kappa}},
$$ 
where $\tilde{\boldsymbol{\varphi}} = (\varphi_1(\tilde{\boldsymbol{x}}),\ldots,\varphi_n(\tilde{\boldsymbol{x}}))^{\intercal}$ and the vector on the right-hand side is given by $\tilde{\boldsymbol{\kappa}} = (\kappa(\tilde{\boldsymbol{x}},\tilde{\boldsymbol{x}}_1),\ldots,\kappa(\tilde{\boldsymbol{x}},\tilde{\boldsymbol{x}}_n))^{\intercal}$.

Such Lagrange basis allows us to formally introduce error bounds for the VSK interpolant that depend on the \emph{similarities} between $\bar{f}$ and $f$.  

    \begin{proposition}     \label{prop_lagr} 
            {Let ${\bar{f}}$ be a scaling function and let $\kappa_{\bar{f}}: \Omega \times \Omega \longrightarrow \R $ be the VSK corresponding to the strictly positive definite and symmetric kernel $\kappa: \tilde{\Omega} \times \tilde{\Omega} \longrightarrow \R$. Let $P_{f}^{\bar{f}}: \Omega \longrightarrow \mathbb{R}$ be the VSK interpolant}. Then, the following point-wise error bound holds true
    \begin{align} \label{eq:lagrange}
          \left|\left(f - P_{f}^{\bar{f}}\right)(\boldsymbol{x})\right| & \leq \left|\left(f - P_{\bar{f}}^{\bar{f}}\right)(\boldsymbol{x})\right| + \| \bs{f} - \bs{\bar{f}}  \|_{\infty} \lambda_{\bar{f}}(\boldsymbol{x}), 
          \end{align}
          where $$\lambda_{\bar{f}}(\boldsymbol{x}) = \sum_{j=1}^{n} | \varphi^{\bar{f}}_j (\boldsymbol{x}) |, $$
    is the Lebesgue function for the VSK interpolant, {and $\boldsymbol{f}-\bs{\bar{f}} = (f(\boldsymbol{x}_1)-\bar{f}(\boldsymbol{x}_1), \ldots, f(\boldsymbol{x}_n)-\bar{f}(\boldsymbol{x}_n))^{\intercal}=(f_1-\bar{f}_1, \ldots, f_n-\bar{f}_n)^{\intercal}$}.
    \end{proposition}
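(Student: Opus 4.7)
The plan is to exploit the standard Lebesgue-type decomposition in the VSK setting. I would first add and subtract the VSK interpolant of the scaling function itself, namely $P_{\bar{f}}^{\bar{f}}(\boldsymbol{x})$, writing
\[
f(\boldsymbol{x}) - P_{f}^{\bar{f}}(\boldsymbol{x}) = \bigl(f(\boldsymbol{x}) - P_{\bar{f}}^{\bar{f}}(\boldsymbol{x})\bigr) + \bigl(P_{\bar{f}}^{\bar{f}}(\boldsymbol{x}) - P_{f}^{\bar{f}}(\boldsymbol{x})\bigr),
\]
and then bound the two terms separately by the triangle inequality.

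The first term is exactly $(f-P_{\bar{f}}^{\bar{f}})(\boldsymbol{x})$, which is the first summand on the right-hand side of \eqref{eq:lagrange} and requires no further work. For the second term, I would exploit the linearity of the VSK interpolation operator with respect to the data. Because the coefficients solve $\mK_{\bar{f}}\boldsymbol{c}=\boldsymbol{f}$ and the Lagrange basis $\{\varphi_j^{\bar{f}}\}_{j=1}^n$ satisfies $\varphi_j^{\bar{f}}(\boldsymbol{x}_i)=\delta_{ij}$ by construction (solving $\mK_{\bar{f}}\tilde{\boldsymbol{\varphi}}=\tilde{\boldsymbol{\kappa}}$), one has the cardinal representation
\[
P_{g}^{\bar{f}}(\boldsymbol{x}) = \sum_{j=1}^{n} g(\boldsymbol{x}_j)\,\varphi_j^{\bar{f}}(\boldsymbol{x})
\]
for any data vector associated to a function $g$. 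Applying this identity to both $g=f$ and $g=\bar{f}$ and subtracting yields
\[
P_{\bar{f}}^{\bar{f}}(\boldsymbol{x}) - P_{f}^{\bar{f}}(\boldsymbol{x}) = \sum_{j=1}^{n}\bigl(\bar{f}_j - f_j\bigr)\varphi_j^{\bar{f}}(\boldsymbol{x}).
\]

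To conclude, I would take absolute values, pull out the maximum $\|\boldsymbol{f}-\bar{\boldsymbol{f}}\|_{\infty}$, and recognize the remaining sum as the Lebesgue function $\lambda_{\bar{f}}(\boldsymbol{x})$, i.e.
\[
\left|P_{\bar{f}}^{\bar{f}}(\boldsymbol{x}) - P_{f}^{\bar{f}}(\boldsymbol{x})\right| \leq \|\boldsymbol{f}-\bar{\boldsymbol{f}}\|_{\infty}\sum_{j=1}^{n}\bigl|\varphi_j^{\bar{f}}(\boldsymbol{x})\bigr| = \|\boldsymbol{f}-\bar{\boldsymbol{f}}\|_{\infty}\,\lambda_{\bar{f}}(\boldsymbol{x}),
\]
which combined with the triangle inequality gives \eqref{eq:lagrange}.

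There is no real obstacle here: the argument is a direct translation of the classical Lebesgue-function bound for polynomial interpolation to the VSK framework, and the only subtlety is making precise that the cardinal basis $\varphi_j^{\bar{f}}$ defined through $\mK_{\bar{f}}\tilde{\boldsymbol{\varphi}}=\tilde{\boldsymbol{\kappa}}$ indeed provides a linear representation of $P_g^{\bar{f}}$ in terms of the samples $\{g(\boldsymbol{x}_j)\}_{j=1}^n$; this follows from the invertibility of $\mK_{\bar{f}}$ guaranteed by the strict positive definiteness of $\kappa$ on $\tilde{\Omega}$.
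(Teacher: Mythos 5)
Your proposal is correct and follows essentially the same route as the paper: the same add-and-subtract of $P_{\bar{f}}^{\bar{f}}$, the triangle inequality, and the observation that linearity of the VSK interpolation operator in the data together with the cardinal representation gives $P_{f}^{\bar{f}}-P_{\bar{f}}^{\bar{f}}=\sum_{j}(f_j-\bar{f}_j)\varphi_j^{\bar{f}}$, bounded by $\|\boldsymbol{f}-\bar{\boldsymbol{f}}\|_{\infty}\lambda_{\bar{f}}(\boldsymbol{x})$. If anything, your write-up is slightly more careful than the paper's, which states this last step with an equality sign where an inequality belongs.
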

    \begin{proof}
          We trivially have that 
        \begin{equation*}
        \begin{split}
    \left|\left(f - P_{f}^{\bar{f}}\right)(\boldsymbol{x})\right| & = \left|\left(f - P_{\bar{f}}^{\bar{f}}\right)(\boldsymbol{x})+\left(P_{\bar{f}}^{\bar{f}} - P_{f}^{\bar{f}}\right)(\boldsymbol{x})\right| \\
     & \leq \left|\left(f - P_{\bar{f}}^{\bar{f}}\right)(\boldsymbol{x})\right|+\left|\left(P_{{f}}^{\bar{f}} - P_{\bar{f}}^{\bar{f}}\right)(\boldsymbol{x})\right|. 
    \end{split}
    \end{equation*}
    To bound the second term in the above inequality, we observe that 
    \begin{align*}
    \left|\left(P_{{f}}^{\bar{f}} - P_{\bar{f}}^{\bar{f}}\right)(\boldsymbol{x})\right| & = \left|P^{\bar{f}}_{f-{\bar{f}}}(\boldsymbol{x})\right| = \sum_{j=1}^n | (f_j-{\bar{f}}_j)  \varphi^{\bar{f}}_j(\boldsymbol{x}) | \leq  \| \bs{f}-\bs{\bar{f}} \|_{\infty} \lambda_{\bar{f}}(\boldsymbol{x}), 
    \end{align*}
    and the thesis follows.
\end{proof}

Equivalently to \eqref{eq:lagrange}, we might write 
    \begin{align*}
       \left|\left(f - P_{f}^{\bar{f}}\right)(\boldsymbol{x})\right| & \leq \left|\left(f - P_{\bar{f}}^{\bar{f}}\right)(\boldsymbol{x})\right| + \| \bs{f} - \bs{\bar{f}}  \|_{\infty} \Lambda_{\bar{f}}, 
    \end{align*}
    where $\Lambda_{\bar{f}}$ is the Lebesgue constant given by \cite{Brutman}
    \[
        \Lambda_{\bar{f}} = \sup_{\boldsymbol{x} \in \mathbb{R}^d} \sum_{j=1}^{n} | \varphi_j^{\bar{f}} (\boldsymbol{x}) |.
    \]    
    {We point out that to get rid of the second term in the right-hand side of \eqref{eq:lagrange}, we only need to set the nodal values of $\bar{f}$ equal to the ones of $f$;  this could be trivially achieved since the interpolation conditions are known.} Nevertheless, the first term suggests that the scaling function $\bar{f}$ should be close to $f$ on the whole domain $\Omega$, as in this way $P_{\bar{f}}^{\bar{f}}$ should approach $P_{f}^{\bar{f}}$ and $f$ in turn, following classical error bounds based either on the power function or on the fill-distance (see \cite[Theorem 11.4 and Theorem 11.13]{Wendland05}). {Moreover, we stress that taking $f\equiv \bar{f}$ is an unrealistic choice, as $f$ is unknown, and we do not have any theoretical nor numerical evidence that $|(f-P_f^f)(\boldsymbol{x}) | \leq |(f-P_f^{\bar{f}})(\boldsymbol{x})| $ for each $\bar{f}$ and $\boldsymbol{x} \in \Omega$.} 
    
    Finally, in terms of native spaces error bounds, we have the following corollary. 

    \begin{corollary}
        Let $\kappa_{\bar{f}}: \Omega \times \Omega \longrightarrow \R $ be the VSK associated to $\kappa: \tilde{\Omega} \times \tilde{\Omega} \longrightarrow \R$ and $f \in N_{\kappa}^{\bar{f}}$, then
        \begin{align*} 
                \left|\left(f - P_{f}^{\bar{f}}\right)(\boldsymbol{x})\right| & \leq \left(\|f\|_{N_{\kappa}^{\bar{f}}} + \boldsymbol{\bar{f}}^{\intercal} \mK_{{\bar{f}}}^{-1} \boldsymbol{\bar{f}}  \right)\| \kappa_{\bar{f}}\|_{N_{\kappa}^{\bar{f}}} + \| \bs{f}-\bs{\bar{f}}  \|_{\infty} \lambda_{\bar{f}}(\boldsymbol{x}). 
        \end{align*}
\end{corollary}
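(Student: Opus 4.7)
Starting from the pointwise bound of Proposition \ref{prop_lagr}, the term $\|\bs{f}-\bs{\bar f}\|_{\infty}\lambda_{\bar f}(\bs x)$ already matches the second summand of the claim, so the task reduces to controlling $|(f-P_{\bar f}^{\bar f})(\bs x)|$ by native-space quantities only. I would \emph{not} try to relate $P_{\bar f}^{\bar f}$ to $P_f^{\bar f}$ (that would loop back to \eqref{eq:lagrange}); instead, a plain triangle split
$$|(f-P_{\bar f}^{\bar f})(\bs x)| \leq |f(\bs x)| + |P_{\bar f}^{\bar f}(\bs x)|$$
lets me dispatch each summand via the reproducing property of $\kappa_{\bar f}$ in $N_\kappa^{\bar f}$.

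Since $f\in N_\kappa^{\bar f}$, $P_{\bar f}^{\bar f}\in N_\kappa^{\bar f}$, and $\kappa_{\bar f}(\cdot,\bs x)\in N_\kappa^{\bar f}$, the reproducing identity $g(\bs x)=(g,\kappa_{\bar f}(\cdot,\bs x))_{N_\kappa^{\bar f}}$ followed by Cauchy--Schwarz gives
$$|f(\bs x)|\leq \|f\|_{N_\kappa^{\bar f}}\,\|\kappa_{\bar f}(\cdot,\bs x)\|_{N_\kappa^{\bar f}}, \qquad |P_{\bar f}^{\bar f}(\bs x)|\leq \|P_{\bar f}^{\bar f}\|_{N_\kappa^{\bar f}}\,\|\kappa_{\bar f}(\cdot,\bs x)\|_{N_\kappa^{\bar f}}.$$
The remaining ingredient is the standard algebraic identity
$$\|P_{\bar f}^{\bar f}\|_{N_\kappa^{\bar f}}^2 = \bs{\bar f}^\intercal \mK_{\bar f}^{-1}\bs{\bar f},$$
obtained by expanding $P_{\bar f}^{\bar f}=\sum_k c_k\kappa_{\bar f}(\cdot,\bs x_k)$ with $\bs c = \mK_{\bar f}^{-1}\bs{\bar f}$ and invoking $(\kappa_{\bar f}(\cdot,\bs x_i),\kappa_{\bar f}(\cdot,\bs x_j))_{N_\kappa^{\bar f}}=\kappa_{\bar f}(\bs x_i,\bs x_j)$. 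Summing the two Cauchy--Schwarz estimates and plugging the result into \eqref{eq:lagrange} produces the stated inequality, with the convention that $\|\kappa_{\bar f}\|_{N_\kappa^{\bar f}}$ denotes $\|\kappa_{\bar f}(\cdot,\bs x)\|_{N_\kappa^{\bar f}}=\sqrt{\kappa_{\bar f}(\bs x,\bs x)}$.

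The main obstacle here is cosmetic rather than substantive: the Cauchy--Schwarz step naturally delivers the prefactor $\sqrt{\bs{\bar f}^\intercal \mK_{\bar f}^{-1}\bs{\bar f}}$, whereas the statement quotes the unsquared quadratic form. I would simply flag this as a notational convention (reading $\bs{\bar f}^\intercal \mK_{\bar f}^{-1}\bs{\bar f}$ as shorthand for the native-space norm of $P_{\bar f}^{\bar f}$); no further analytic work is required, as every estimate used follows directly from the reproducing structure of $N_\kappa^{\bar f}$ and the definition of the VSK interpolant through \eqref{sistema}.
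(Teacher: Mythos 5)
Your proof is correct and follows essentially the same route as the paper's: the paper applies the reproducing property and Cauchy--Schwarz to $f - P_{\bar{f}}^{\bar{f}}$ as a whole and then splits the norm via the triangle inequality, whereas you split pointwise first and apply Cauchy--Schwarz to each piece --- a trivial rearrangement resting on the same identity for $\|P_{\bar{f}}^{\bar{f}}\|_{N_{\kappa}^{\bar{f}}}$ and the same isometry of native spaces from \cite{Bozzini2015199}. Your flag about the square root is well taken: Cauchy--Schwarz indeed produces $\sqrt{\bs{\bar{f}}^{\intercal}\mK_{\bar{f}}^{-1}\bs{\bar{f}}}$, and the paper's own proof asserts the unsquared identity $\|P_{\bar{f}}^{\bar{f}}\|_{N_{\kappa}^{\bar{f}}}=\bs{\bar{f}}^{\intercal}\mK_{\bar{f}}^{-1}\bs{\bar{f}}$, so the imprecision you noticed lies in the source, not in your argument.
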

\begin{proof}
    In \cite[Theorem 2]{Bozzini2015199} the authors proved that the native spaces $N_{\kappa}$ and $N_{\kappa}^{\bar{f}}$ are isometric. Then, by the Cauchy-Schwarz inequality and the reproducing property, we have that  
        \begin{equation*}
        \begin{split}
    \left|\left(f - P_{\bar{f}}^{\bar{f}}\right)(\boldsymbol{x})\right| & = \left( f - P_{\bar{f}}^{\bar{f}} \ , \ \kappa_{\bar{f}} (\cdot,\boldsymbol{x}) \right)_{N_{\kappa}^{\bar{f}}} \\
    & \leq \| f - P_{\bar{f}}^{\bar{f}}\|_{N_{\kappa}^{\bar{f}}}  \| \kappa_{\bar{f}}\|_{N_{\kappa}^{\bar{f}}} \\
     & \leq \left( \| f \|_{N_{\kappa}^{\bar{f}}} + \|P_{\bar{f}}^{\bar{f}}\|_{N_{\kappa}^{\bar{f}}} \right) \| \kappa_{\bar{f}}\|_{N_{\kappa}^{\bar{f}}}, 
    \end{split}
    \end{equation*} 
    and as $\|P_{\bar{f}}^{\bar{f}}\|_{N_{\kappa}^{\bar{f}}}=\boldsymbol{\bar{f}}^{\intercal} \mK_{{\bar{f}}}^{-1} \boldsymbol{\bar{f}}$, the thesis follows.
\end{proof}

    We conclude this subsection by pointing out that we have some theoretical evidence of the fact that the function $\bar{f}$ should approximate ${f}$. However, it is worth clarifying that no optimal properties have been proved and, as the function $f$ is in practice unknown, we do not have any criteria or algorithm to automatically define the \emph{best} scaling function. Hence we implement a NN that confirms our findings and returns a safe (i.e., nearly-optimal) scaling function $\bar{f}$. More precisely, numerically we will observe that for the practical implementation of the $\delta$NN-VSKs the demanding pointwise convergence of the function $\bar{f}$ to ${f}$ can typically be relaxed, as reproducing some of the key features of the target usually leads to accurate approximations. Further comments are provided later in Subsecion \ref{sintetici}.

\subsection{$\delta$NN-VSKs: learning nearly-optimal scaling functions via $\delta$NNs}
\label{sec:delta}

    The NN that we consider is referred to as $\delta$NN and its first usage \cite{DellaSanta2023} was devoted to detect the discontinuity interfaces while approximating discontinuous functions. Such NN is characterized by the introduction of learnable discontinuities in the layers, legitimated by the universal approximation theorems \cite{Kidger20202306}, and hence might return a discontinuous output. {Before introducing the discontinuous layers that characterize $\delta$NNs, we recall that a standard Fully-Connected (FC) layer, from $n_{\rm in}$ units to $n_{\rm out}$ units, can be described by the function 
\begin{equation}\label{eq:FCL}
	\mathcal{L}(\v{x}) = \v{\sigma} \left( \mW^{\intercal} \v{x} + \v{b} \right)\,, \quad   \v{x}\in\R^{n_{\rm in}}\,,
\end{equation}
    where $\mW\in\R^{n_{\rm in}\times n_{\rm out}}$ is the weight matrix, $\v{b}\in\R^{n_{\rm out}}$ is the bias vector, and $\v{\sigma}$ is the element-wise application of the activation function $\sigma:\R\rightarrow\R$.
    
    The characterizing function of a discontinuous layer is a function $\delta\mathcal{L}:\R^{n_{\rm in}}\rightarrow\R^{n_{\rm out}}$ such that
\begin{equation}\label{eq:deltaL}
    \mathcal{L}(\v{x}) = \v{\sigma} \left( \mW^{\intercal} \v{x} + \v{b} \right) + \v{\alpha} \odot \v{\mathcal{H}} \left( \mW^{\intercal} \v{x} + \v{b} \right)\,, \quad   \v{x}\in\R^{n_{\rm in}}\,,
\end{equation}
    where $\v{\mathcal{H}}$ is the element-wise application of the Heaviside function and $\v{\alpha}\in\R^{n_{\rm out}}$ is the vector of {\it trainable discontinuity jumps}. In brief, for each $j=1,\ldots, n_{\rm out}$, equation \eqref{eq:deltaL} is equivalent to add to the $j$-th element of \eqref{eq:FCL} a jump of height $\alpha_j$ if the $j$-th row of $\mW^{\intercal}\v{x}+\v{b}$ is non-negative (otherwise, no jump is applied). See Figure \ref{fig:examplejumps} for a visual example of discontinuous layers with $n_{\rm in}=n_{\rm out}=1$.
}

\begin{figure}[htb]
	\centering
	\includegraphics[width=0.45\textwidth]{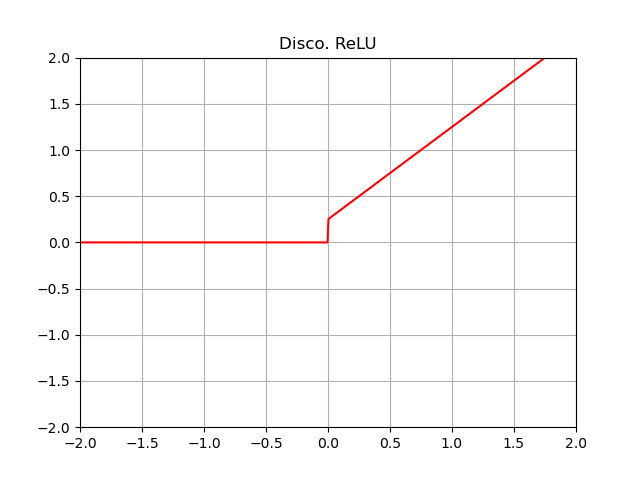}
	\includegraphics[width=0.45\textwidth]{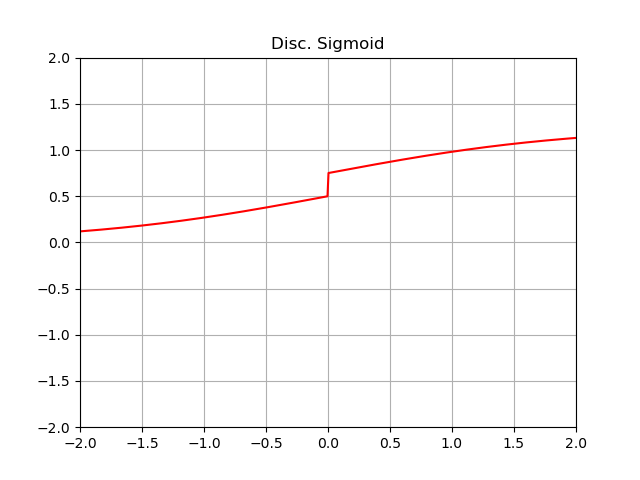}
	\\
	\quad
	\\
	\includegraphics[width=0.45\textwidth]{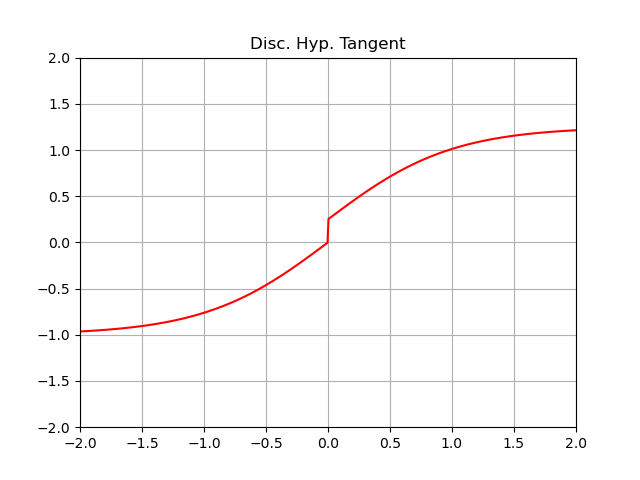}
	\includegraphics[width=0.45\textwidth]{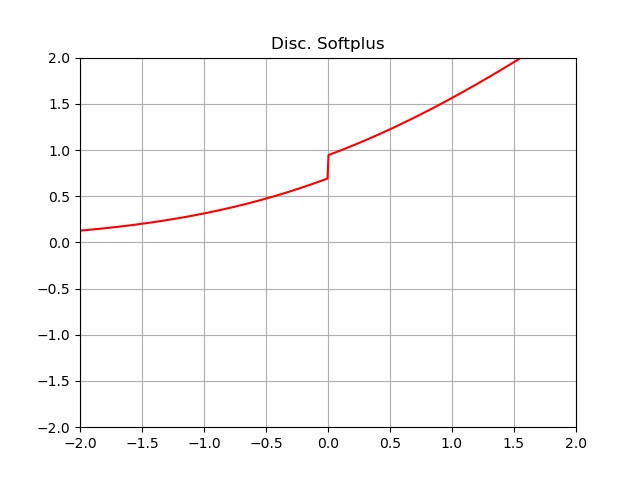}
	\caption{{Examples of discontinuous layers ($n_{\rm in}=n_{\rm out}=1$) with different activation functions.}}
	\label{fig:examplejumps}
\end{figure}

{The main property of a NN embedded with discontinuous layers (i.e., a $\delta$NN) is the ability to return both an approximation of the target function and an approximation of its discontinuity interfaces. Specifically, in a $\delta$NN, the weight matrices and the bias vectors of the discontinuous layers have the role of learning the discontinuity interfaces, while the discontinuity jump parameters have the role of learning the height of the jump along the discontinuity interfaces. For further theoretical or practical details about $\delta$NNs, we refer the reader to \cite{DellaSanta2023}.
}

{The reason why we are interested in $\delta$NNs for defining the scaling function $\bar{f}$ lies in the fact that, in principle, $\bar{f}$ can be discontinuous}, and we already have numerical evidence of the fact that if $\bar{f}$ mimics the discontinuities of $f$ then the Gibbs phenomenon observed in the approximations is drastically reduced. The novelty in this case is that we do not learn the discontinuities of $f$, we learn directly the function $\bar{f}$ without imposing any constraints on its discontinuities. 

We define the $\delta$NN as a parametric scaling function for the VSKs, aiming to find the \emph{optimal} one for the interpolant of $f$; i.e., we train a $\delta$NN $\bar{f}_{\v{\theta}}:\R^d\rightarrow\R^m$, with trainable parameters denoted by $\v{\theta}\in\R^p$, and we adapt the interpolation coefficients $\v{c}\in\R^n$ by minimizing the interpolation error of the VSK interpolant. 

More precisely, we build a NN model $\mathcal{M}_{\v{\theta},\v{\eta}}:\R^d\rightarrow\R^m\times\R^n$ pairing the $\delta$NN $\bar{f}_{\v{\theta}}$ and a model $\v{c}_\eta:=\v{c}(\v{\eta})$ for the coefficients, with $\v{\theta}\in\R^p, \v{\eta}\in\R^q$ trainable parameters, such that
\begin{equation*}
    \mathcal{M}_{\v{\theta},\v{\eta}}(\v{x})=(\bar{f}_{\v{\theta}}(\v{x}), \v{c}_{\eta}) ,
\end{equation*}
for each $\v{x}\in\R^d$.

The training of $\mathcal{M}_{\v{\theta},\v{\eta}}$ is obtained by minimizing the loss
\begin{equation}\label{eq:loss_vsk_plus_deltann}
        \begin{split}
    \ell_{X_n}(\v{\theta}, \v{\eta}) & := \frac{1}{n}
    \norm{
    \v{f} - \mK_{\bar{f}_{\v{\theta}}} \v{c}_{\v{\eta}}
    }_2^2 \\
     & = \frac{1}{n}
    \sum_{i=1}^n 
    \left(
    f_i - 
    \sum_{k=1}^n c_k(\v{\eta}) \ \kappa\left((\v{x}_i, \bar{f}_{\v{\theta}}(\v{x}_i))\ , \ (\v{x}_k, \bar{f}_{\v{\theta}}(\v{x}_k))\right)
    \right)^2,
    \end{split}
    \end{equation}
where a global minimum $(\v{\theta}^*,\v{\eta}^*)\in\R^p\times\R^q$ for \eqref{eq:loss_vsk_plus_deltann} is such that $\ell_{X_n}(\v{\theta}^*, \v{\eta}^*) = 0$ (i.e., interpolation satisfied). 

We drive the reader's attention towards the fact that \eqref{eq:loss_vsk_plus_deltann} is defined using the whole training data $\{(\v{x}_i, f_i)\}_{i=1}^n$ and not using random mini-batches sampled from them; indeed, since the main task is the interpolation, we prefer a deterministic training procedure instead of a stochastic one. 

\begin{remark}
    \label{remark1}
    Let $(\v{\theta}^{\rm fin}, \v{\eta}^{\rm fin})$ be the parameters obtained at the end of the training by minimizing \eqref{eq:loss_vsk_plus_deltann}. It is likely that the interpolation conditions corresponding to $(\v{\theta}^{\rm fin}, \v{\eta}^{\rm fin})$ are only approximately satisfied; indeed, stopping criteria are adopted and it is possible that the training stops before reaching a given tolerance. Therefore, it is recommended to build a model based on  $\v{\theta}^{\rm fin}$ and an updated version of $\v{c}$, namely $\v{c}=\mK^{-1}_{\bar{f}_{\v{\theta}^{\rm fin}}} \v{f}$. {In case an iterative solver is used to solve the linear system, $\v{c}_{\v{\eta}^{\rm fin}}$ can be used as a starting guess}.
\end{remark}

Summarizing the procedure, the $\delta$NN-VSK interpolant for a target function $f$ is constructed as in \eqref{eq:VSK_interpol} where the coefficients $\v{c}=(c_1,\ldots ,c_n)^{\intercal}$ are computed as reported in the Remark \ref{remark1} above.

The numerical experiments will show that the scaling function learned via the $\delta$NN-VSK method is, indeed, a nearly-optimal choice and that it closely mimics the target's shape. Therefore, in the next subsection, we propose an alternative way for computing the VSK interpolant, namely the VSK-$f$ approach, that directly learns the target and uses such an approximation as a scaling function.

\subsection{VSKs-$f$: using an approximation of $f$ as scaling function}\label{sec:VSKf}

Another possible approach for defining a proper scaling function can be more focused on the claim stated at the end of Section \ref{sec:theoretic_res}. Specifically, Proposition \ref{prop_lagr} suggests that a good approximation of the target can be used as a ``safe'' scaling function for a VSK interpolant. Therefore, we can renounce to find a nearly-optimal scaling function minimizing a loss like \eqref{eq:loss_vsk_plus_deltann}, preferring to train a $\delta$NN for directly approximating $f$, instead. Nonetheless, we recall that the quantity of data typically used for interpolation (i.e., $n$) rarely is enough for a fine approximation of $f$ via $\delta$NN models; therefore, finding a VSK interpolant of $f$ is still necessary.

In details, given a $\delta$NN $\bar{f}_{\v{\theta}}$, we train it for approximating $f$, using the interpolation data as the training set and via a classic stochastic training procedure. Therefore, the trained model $\bar{f}_{\v{\theta}^{\rm fin}}$ will be an approximation of $f$, and the larger is $n$ (i.e., the amount of interpolation data used for the training) the finer the approximation quality.

In conclusion, the VSK-$f$ method consists in computing the VSK interpolant \eqref{eq:VSK_interpol} of $f$ using $\bar{f}_{\v{\theta}^{\rm fin}}$ as scaling function.

\begin{remark}
    {The main advantage of using  VSKs-$f$ with respect to $\delta$NN-VSKs is that training $\bar{f}_{\v{\theta}}$ for approximating $f$ is easier than training the model $\mathcal{M}_{\v{\theta}, \v{\eta}}$ that learns simultaneously the coefficients of the approximant and the scaling function. Of course, the price of an easier procedure is the risk of using a safe scaling function, which however is not tailored for the considered kernel basis.}
\end{remark}

\section{Numerical experiments}\label{esperimenti}

    {We test the proposed procedure both on synthetic datasets (Subsection \ref{sintetici}) obtained by sampling three selected functions at given sets of quasi-uniform nodes, and on data describing the real-world phenomenon of the acetone's phase transition (Subsection \ref{sec:acetone}). 
    For all the numerical experiments, we have used the same residual $\delta$NN architecture and the same training options. Details are provided in \ref{sec:deltaNN_arch}.
    }

\subsection{Tests with synthetic datasets}\label{sintetici}

    In the numerical experiments that follow we take as test functions the  well-known \emph{Franke's} function
    \begin{equation*}
        \begin{split}
    f_1(x_1,x_2) = & \dfrac{3}{4}{\rm e}^{-[(9x_1-2)^2+(9x_2-2)^2]/4} +\dfrac{3}{4}{\rm e}^{-(9x_1+1)^2/49-(9x_2+1)/10} \\
     & + \dfrac{3}{4}{\rm e}^{-[(9x_1-2)^2+(9x_2-2)^2]/4} +\dfrac{3}{4}{\rm e}^{-(9x_1+1)^2/49-(9x_2+1)/10},
    \end{split}
    \end{equation*}
    which is continuous, being the sum of exponential terms, {and the discontinuous test functions: 
    \begin{align*}
        f_2(x_1,x_2) = \begin{cases}
        -{\rm e}^{(-(x_1-0.5)^2+(x_2-0.5)^2)}, \quad \textrm{if} \quad ||\boldsymbol{x} - (0.5,0.5)||_2^2 \geq 0.08\\
        \sin(x_1)+4\sin(x_2), \quad \textrm{otherwise}, 
             \end{cases}  
      \end{align*}
      and
      \begin{align*}
        f_3(x_1,x_2) = \begin{cases}
        \sin (0.4 \pi (x_1 + x_2)), \quad \textrm{if} \quad x_2 \geq {\rm e}^{x_1},\\
        \sin (0.7 \pi (x_1 + x_2)) - 4, \quad \textrm{if} \quad x_2 < {\rm e}^{x_1}-1,\\
        \sin (\pi (x_1 + x_2)) +4, \quad \textrm{if} \quad \textrm{otherwise}.
        \end{cases}  
    \end{align*}}
    We sample these functions at $n=[27^2,33^2,39^2]$ scattered Halton data in $[0,1]^2$ and we interpolate them with respect to:
    \begin{itemize}
        \item classical Fixed Scale Kernels (FSKs), see \eqref{eq1};
        \item $\delta$NN-VSKs (see Subsection \ref{sec:delta});
        \item VSKs-$f$ (see Subsection \ref{sec:VSKf}).
    \end{itemize}
    {The dataset size considered here is rather small, indeed, in order to reconstruct the target when the dataset is large enough, one may use local and greedy methods \cite{cavorettoEfficient, Santin23} or directly the $\delta$NN.}

    For all the VSK interpolants, we consider a scalar scaling function $\bar{f}_{\v{\theta}}$; i.e., $\bar{f}_{\v{\theta}}:\R^2\rightarrow\R$. 
    
    All the interpolants are evaluated on a $\sqrt{N}\times \sqrt{N}$ regular grid of evaluation points $\boldsymbol{\xi}_k$, $k=1,\ldots ,N$, with $\sqrt{N}=100$. In particular, we compute the following performance indicators for the interpolation accuracy of a given approximant $P$:
    \begin{itemize}
        \item Mean Absolute Error (MAE):
        $$
        \mathrm{MAE}(P)=\frac{1}{N}\sum_{k=1}^{N}|f(\boldsymbol{\xi}_k) - P(\boldsymbol{\xi}_k)|;
        $$
        \item Mean Squared Error (MSE):
        $$
        \mathrm{MSE}(P)=\frac{1}{N}\sum_{k=1}^{N}(f(\boldsymbol{\xi}_k) - P(\boldsymbol{\xi}_k))^2.
        $$
    \end{itemize}   
    
    Moreover, we also compute the Structural Similarity Index Measure (SSIM) \cite{SSIM_paper_2004} with respect to the greyscale image $I_f$ generated by the evaluations of $f$ on the $\sqrt{N}\times \sqrt{N}$ grid and the greyscale image $I_P$ generated by the evaluations of the interpolant $P$ on the same grid. The value $\mathrm{SSIM}(I_f,I_P)$ is in $[-1,1]$, where $1$ denotes perfect similarity, $0$ denotes no similarity, and $-1$ denotes perfect anti-correlation. {The SSIM index is an accuracy score popular in many imaging applications, such as image processing, super-resolution, image resizing, image rotation and registration, and whose convergence in the context of image interpolation has been recently studied in \cite{MarchettiSSIM}}. This index is computed via the \emph{tf.image.ssim} function of Tensorflow \cite{SSIM_TF}. For more details about SSIM refer to \cite{SSIM_paper_2004,SSIM_TF}.
        
    The kernel used to approximate the function $f_1$ (independently of the method used) is the Gaussian $C^{\infty}$ function:
    $$
    \phi_{\varepsilon}(r)=\rm{e}^{-\varepsilon^2r^2}\,,
    $$ 
    while for {$f_2$ and $f_3$} we use the Mat\'ern $C^2$ kernel
    $$
    \phi_{\varepsilon}(r)=\rm{e}^{-\varepsilon r}(1+\varepsilon r)\,.
    $$ 
    
    The results are reported in Table \ref{tab:tabella_1}. All results are reproducible as the Python code is freely available at 
    \begin{center}
        \url{https://github.com/Fra0013To/VSKlearning/tree/paper2024}.
    \end{center}
    {In Figures \ref{fig:f1_topview_comparison}--\ref{fig:f3_surface_errors_horizontal}, we depict all the obtained surfaces. In particular, we plot both their top view, which is essential to clearly see the improvements in the discontinuous case, and their 3D counterpart false colored with the absolute error.}
    
    {In all the considered test cases, we observe (see Table \ref{tab:tabella_1}) that the $\delta$NN-VSK and VSK-$f$ methods behave better than FSKs. Precisely, for the continuous test function $f_1$, the FSKs already give good approximations, while in the discontinuous cases the $\delta$NN-VSK and VSK-$f$ methods provide a consistent improvement on the approximation accuracy (see Table \ref{tab:tabella_1} and Figures \ref{fig:f1_topview_comparison}-\ref{fig:f3_surface_errors_horizontal}); moreover, in the latter case, the improvement in terms of SSIM (refer to Table \ref{tab:tabella_1}) of the VSKs-based methods is rather impressive. 
    Such results are due to the fact that when data are smooth, VSKs might only improve already quite good approximation scores returned by FSKs. On the other hand, in the discontinuous cases, the hardest task is to identify and reconstruct the faults; therefore, any discontinuous scaling function sharing the same discontinuities of the target provides a positive contribution.}
    
\begin{table}[htb]
\centering
\resizebox{1.\textwidth}{!}{
\begin{tabular}{|c|ccc|ccc|ccc|ccc|}
\hline 
& \multicolumn{3}{c|}{Interpolation} & \multicolumn{3}{c|}{MAE} & \multicolumn{3}{c|}{MSE} & \multicolumn{3}{c|}{SSIM} \\
\hline 
& $n$ & Kernel & $\varepsilon$ & FSKs & $\delta$NN-VSKs & VSKs-$f$ & FSKs & $\delta$NN-VSKs & VSKs-$f$ & FSKs & $\delta$NN-VSKs & VSKs-$f$  \\
\hline 
\multirow{3}{*}{$f_1$} 
& 729 & Gauss. & 0.6 & 4.99\rm{e}-2 & 8.98\rm{e}-3 & 3.57\rm{e}-3 & 4.24\rm{e}-3 & 1.42\rm{e}-4 & 2.43\rm{e}-5 & 
0.8712 & 0.9820 & 0.9943  \\
& 1089 & Gauss. & 1.2 & {2.38\rm{e}-2} & 3.48\rm{e}-3 & 2.11\rm{e}-3 & 9.88\rm{e}-4 & 2.33\rm{e}-5 & 8.25\rm{e}-6 &
0.9533 & 0.9969 & 0.9980 \\
& 1521 & Gauss. & 4.8 & 3.60\rm{e}-4 & 3.60\rm{e}-4 & 2.47\rm{e}-4 & 3.84\rm{e}-7 & 3.83\rm{e}-7 & 2.70\rm{e}-7 &
0.9997 & 0.9997 & 0.9999 \\
\hline
\multirow{3}{*}{$f_2$} 
& 729 & Mat.$C^2$ & 0.06 & 1.97\rm{e}-1 & 2.09\rm{e}-2 & 1.65\rm{e}-2 &  5.76\rm{e}-2 & 3.72\rm{e}-3 & 3.84\rm{e}-3 &
0.5883 & 0.9074 & 0.9253 \\
& 1089 &  Mat.$C^2$ & 0.12 & {1.45\rm{e} -1} & 8.06\rm{e}-3 & 1.29\rm{e}-2 & 3.37\rm{e}-2 & 2.12\rm{e}-3 & 2.18\rm{e}-3 &
0.5885 & 0.9556 & 0.9486 \\
& 1521 & Mat.$C^2$ & 0.48 & 6.65\rm{e}-2 & 5.68\rm{e}-3 & 8.37\rm{e}-3 & 1.33\rm{e}-2 & 1.89\rm{e}-3 & 2.37\rm{e}-3 &
0.6453 & 0.9696 & 0.9559 \\
\hline
\multirow{3}{*}{$f_3$} 
& 729 & Mat.$C^2$ & 0.06 & 2.09\rm{e}-1 & 4.30\rm{e}-2 & 1.40\rm{e}-2 &  5.84\rm{e}-2 & 3.82\rm{e}-3 & 2.42\rm{e}-3 &
0.6594 & 0.7928 & 0.9495 \\
& 1089 & Mat.$C^2$ & 0.12 & 1.34\rm{e}-1& 3.65\rm{e}-2 & 1.98\rm{e}-2 &  3.08\rm{e}-2 & 3.58\rm{e}-3 & 2.71\rm{e}-3 &
0.6411 & 0.8993 & 0.9312 \\
& 1521 & Mat.$C^2$ & 0.48 & 6.28\rm{e}-2 & 1.00\rm{e}-2 & 6.48\rm{e}-3 &  1.20\rm{e}-2 & 1.66\rm{e}-3 & 1.19\rm{e}-3 &
0.7242 & 0.9657 & 0.9760 \\
\hline 
\end{tabular}
}
\caption{{MAEs, MSEs, and SSIMs for the test functions $f_1$, $f_2$, and $f_3$. 
}
}
\label{tab:tabella_1}
\end{table}
    
{For all the cases, we recall that no information is provided about the continuity or discontinuity of the target, nor is information provided for selecting the scaling function. Therefore, the continuous case is a rather interesting test case, because it shows the flexibility of the $\delta$NN architecture in learning also continuous targets. In other words, if one has no information about the smoothness of data, then the $\delta$NN-VSK approach is a practical and robust choice for approximating the target.

Moreover, concerning the scaling function learned via $\delta$NNs, we observe that for both the continuous test ($f_1$) and all the other discontinuous tests, the $\delta$NN-VSK method learns nearly-optimal scaling functions that mimic the targets, while the VSK-$f$ method supports the theoretical claims of the safety in selecting a scaling function that not only mimics but approximates the target; refer to the bottom rows of Figures \ref{fig:f1_topview_comparison}, \ref{fig:f2_topview_comparison}, and \ref{fig:f3_topview_comparison}}.
    
{Summarizing, from the analysis of the interpolation results, we observe the following:}

\begin{itemize}
    
    \item Our VSKs-based methods provide reliable approximations {both for the continuous case and for the discontinuous test functions (see Table \ref{tab:tabella_1} and Figures \ref{fig:f1_surface_errors_horizontal}--\ref{fig:f3_surface_errors_horizontal})};
    
    \item The $\delta$NN-VSK method proves to be able to learn a nearly-optimal scaling function automatically, and as a numerical confirmation of our claims, the identified scaling functions $\bar{f}_{\v{\theta}^{\rm fin}}$ are such that they \emph{mimic} the target $f$. This mimicking behavior can be detailed {(e.g., see Figure \ref{fig:f1_topview_comparison}) or ``generic'' (e.g., see Figures \ref{fig:f2_topview_comparison}--\ref{fig:f3_topview_comparison});} 
    
    \item The results obtained via the VSK-$f$ method support our theoretical claim reported at the end of Section \ref{sec:theoretic_res}; i.e., using a scaling function $\bar{f}_{\v{\theta}^{\rm fin}}\approx f$ is a safe choice that guarantees a good interpolation accuracy, even if not necessarily the \emph{optimal one} (see Table \ref{tab:tabella_1}).
\end{itemize}

Concerning the second item above,  it is worth remarking that the function ${\bar{f}}:=\bar{f}_{\v{\theta}^{\rm fin}}$ of the $\delta$NN-VSK method
    is such that $\bar{f} \approx \gamma f$, where $\gamma: \mathbb{R}^d \longrightarrow \mathbb{R}$ is a continuous function. This is coherent with our implementation of the $\delta$NN and its loss \eqref{eq:loss_vsk_plus_deltann}. Let us write the VSKs with their explicit dependence on the shape parameter and on the radial distance, i.e., for $\boldsymbol{x},\boldsymbol{z}\in \Omega$:
        \begin{equation*} 
    \begin{split}
    \kappa_{\gamma {f}}(\boldsymbol{x},\boldsymbol{z}) & = \phi\left(\varepsilon\sqrt{\sum_{k=1}^d ({x}^{(k)}-{z}^{(k)})^2+((\gamma {f})(\boldsymbol{x})- (\gamma {f})(\boldsymbol{z}))^2}\right)\\
     & = \phi\left(\sqrt{\sum_{k=1}^d\varepsilon^2({x}^{(k)}-{z}^{(k)})^2+\varepsilon^2 (\gamma(\boldsymbol{x}) {f}(\boldsymbol{x})-{\gamma(\boldsymbol{z}) {f}(\boldsymbol{z}))^2}}\right). 
    \end{split}
    \end{equation*}
    {Then, the fact that the function $\bar{f}$ mimics the shape of the function $f$ but sometimes possibly out of scale is a consequence of the fact that the $\delta$NN itself tries to \emph{adjust}, via the function $\gamma$, the fixed shape parameter $\varepsilon$. In principle, we could optimize the shape parameter too, indeed methods for optimizing $\varepsilon$ could be integrated with our scheme \cite{driscoll2002interpolation,ling2022stochastic,cavoretto2021,Wenzel}; nevertheless, this is beyond the scope of the current paper as here the focus is on learning $\bar{f}$; such feature can be addressed in future work.}
    However, the most relevant result of these experiments is that we always observe some sort of similarities between the targets and the recovered scaling functions (coherently with Proposition \ref{prop_lagr}) and that the proposed VSK-based methods work properly for automatically identifying nearly-optimal/safe scaling functions.

\begin{figure}[htbp!]
    \centering
    \subcaptionbox{Target}{
    \includegraphics[trim={2.3cm 1.6cm 2.45cm 1.9cm},clip,width=0.38\textwidth]{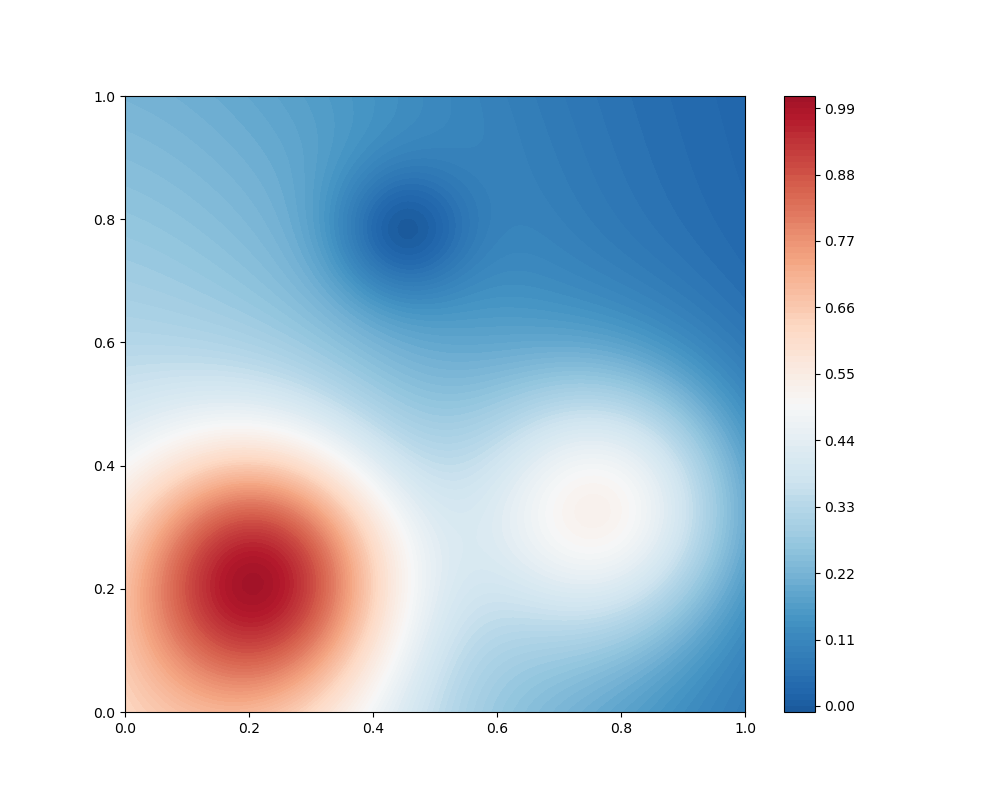}
    }
    \subcaptionbox{FSKs}{
    \includegraphics[trim={2.3cm 1.6cm 2.45cm 1.9cm},clip,width=0.38\textwidth]{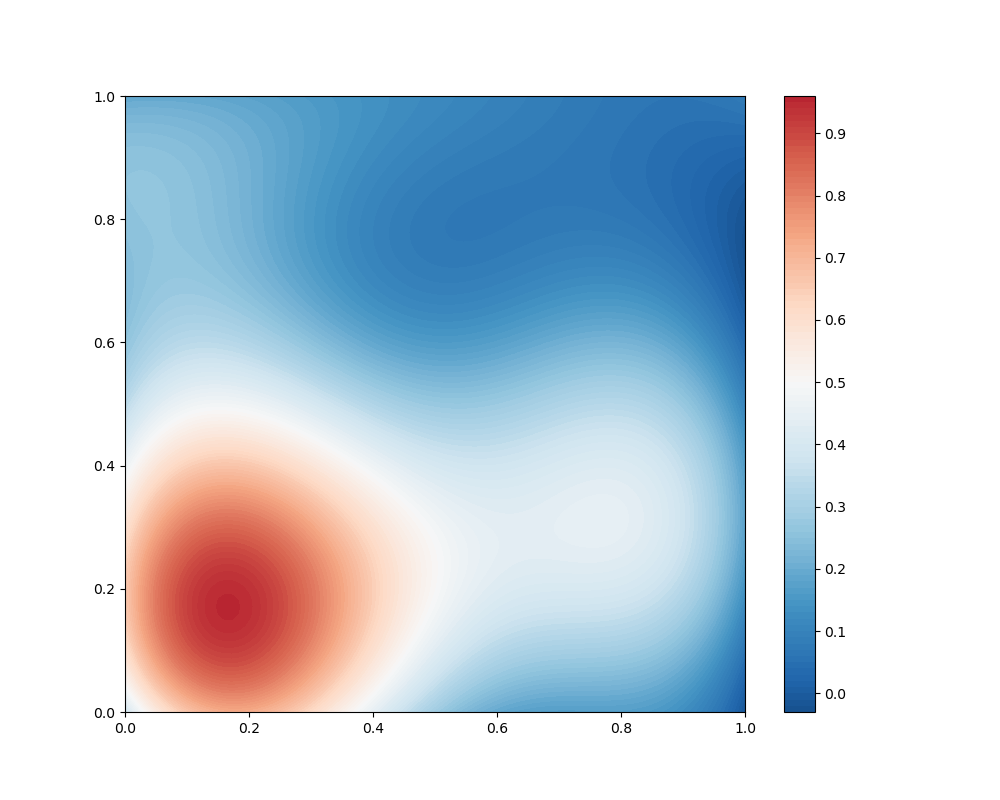}
    }
    \\
    \subcaptionbox{$\delta$NN-VSKs}{
    \includegraphics[trim={2.3cm 1.6cm 2.45cm 1.9cm},clip,width=0.38\textwidth]{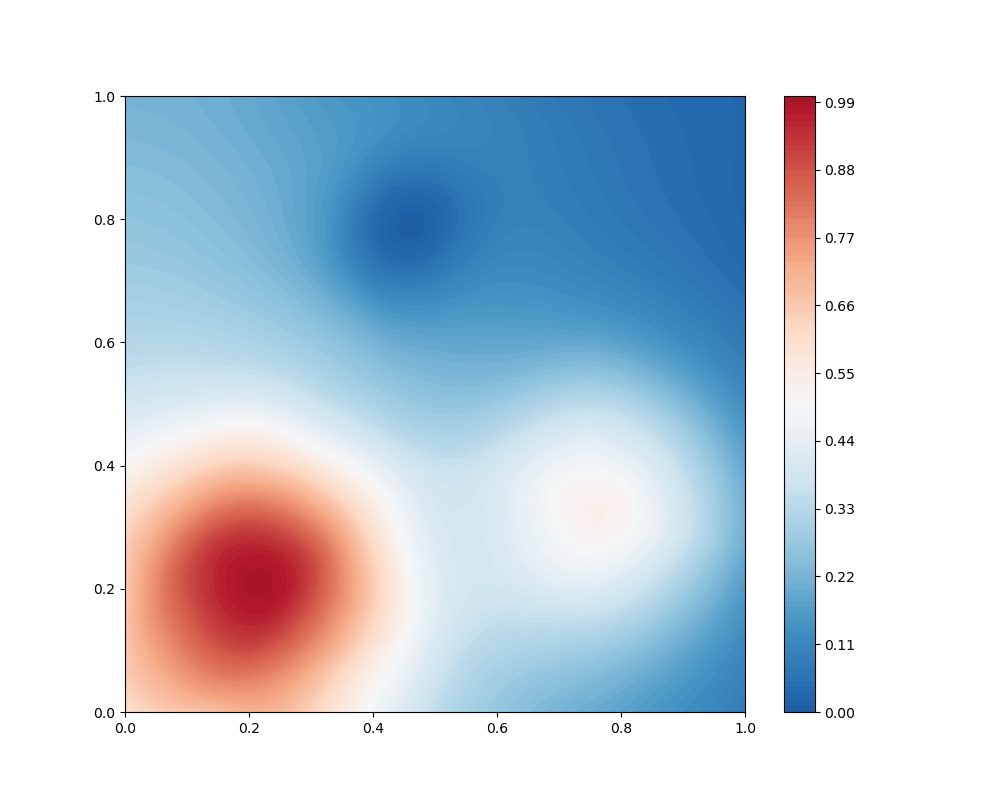}
    }
    \subcaptionbox{VSKs-$f$}{
    \includegraphics[trim={2.3cm 1.6cm 2.45cm 1.9cm},clip,width=0.38\textwidth]{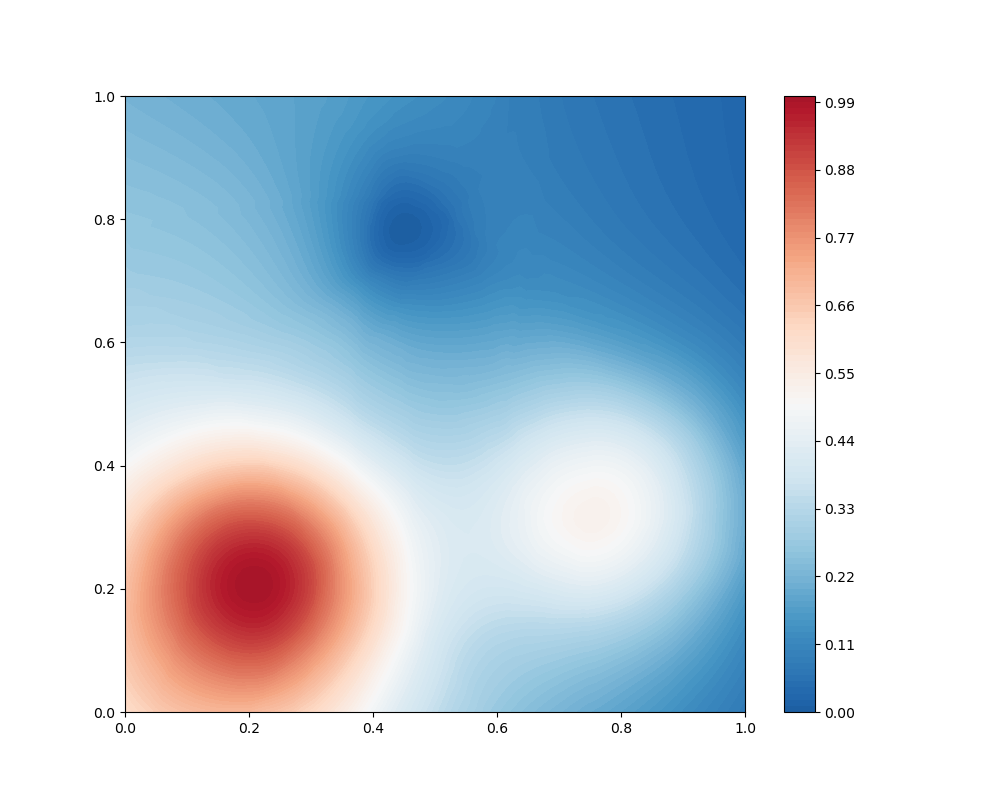}
    }
    \\
    \subcaptionbox{$\delta$NN-VSKs,\\ scaling func.}{
    \includegraphics[trim={2.3cm 1.6cm 2.45cm 1.9cm},clip,width=0.38\textwidth]{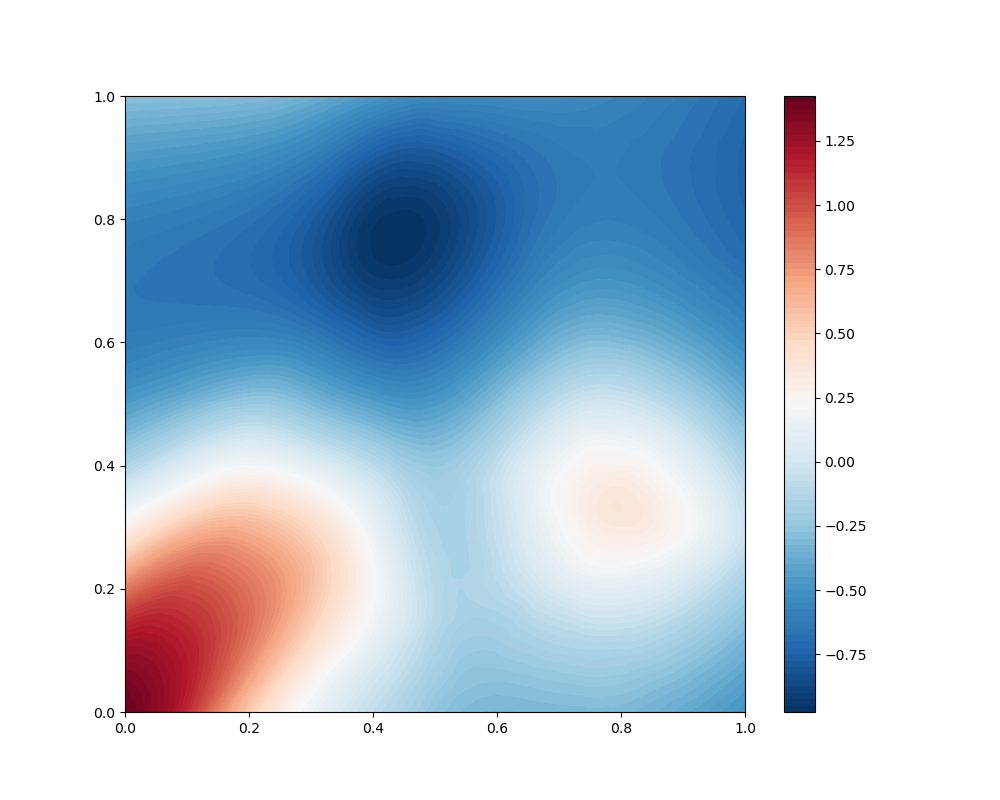}
    }
    \subcaptionbox{VSKs-$f$,\\ scaling func.}{
    \includegraphics[trim={2.3cm 1.6cm 2.45cm 1.9cm},clip,width=0.38\textwidth]{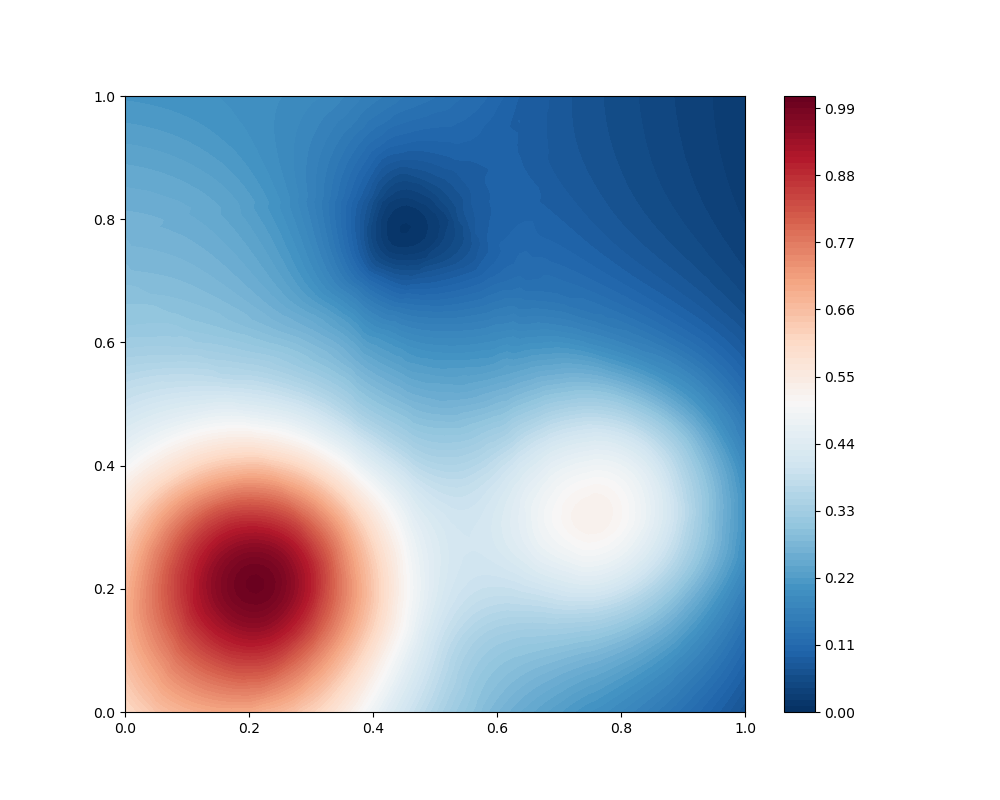}
    }
    \caption{{Interpolation results for $f_1$. Case $n=33^2=1089$ interpolation points, Gaussian Kernel. Shared color bars for subfigures ($a$)-($d$); custom color bars for the scaling functions (subfigures ($e$) and ($f$)).}}
    \label{fig:f1_topview_comparison}
\end{figure}

\begin{figure}[htbp!]
    \centering
    \subcaptionbox{Target}{
    \includegraphics[trim={2.3cm 1.6cm 2.45cm 1.9cm},clip,width=0.38\textwidth]{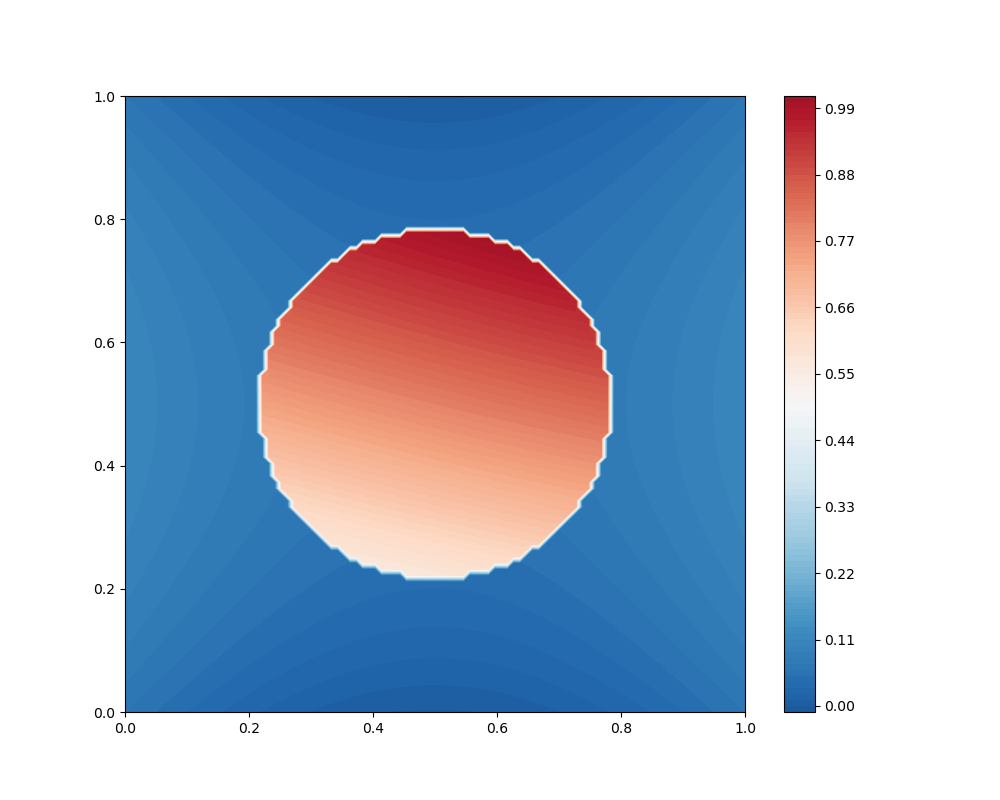}
    }
    \subcaptionbox{FSKs}{
    \includegraphics[trim={2.3cm 1.6cm 2.45cm 1.9cm},clip,width=0.38\textwidth]{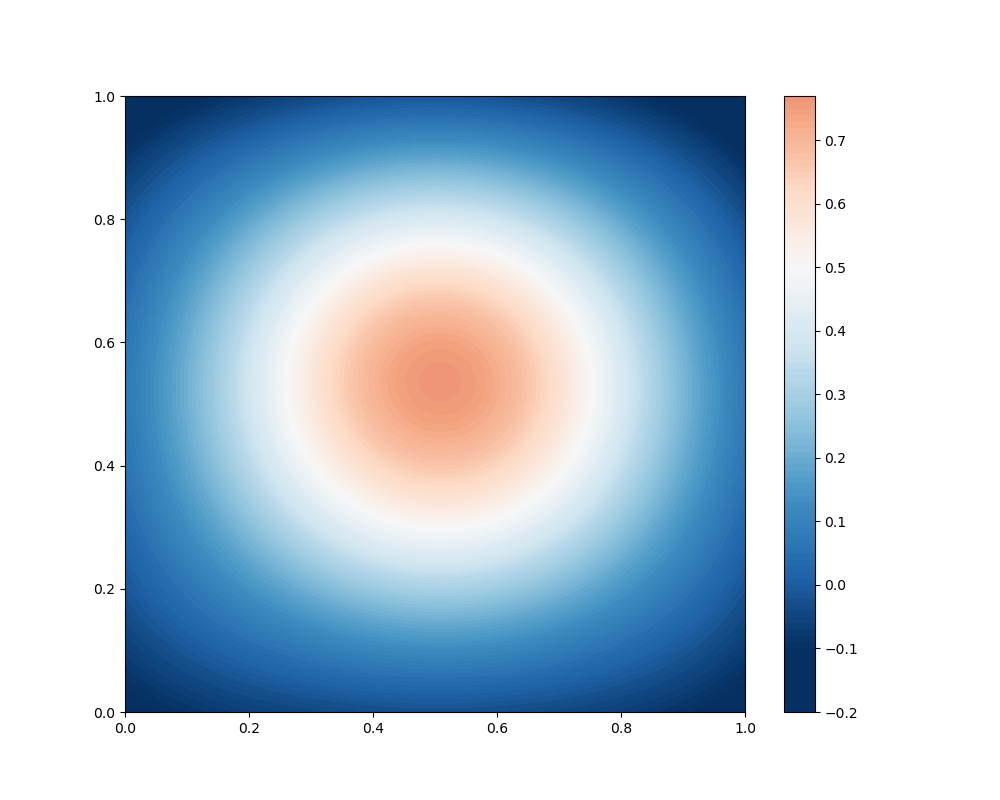}
    }
    \\
    \subcaptionbox{$\delta$NN-VSKs}{
    \includegraphics[trim={2.3cm 1.6cm 2.45cm 1.9cm},clip,width=0.38\textwidth]{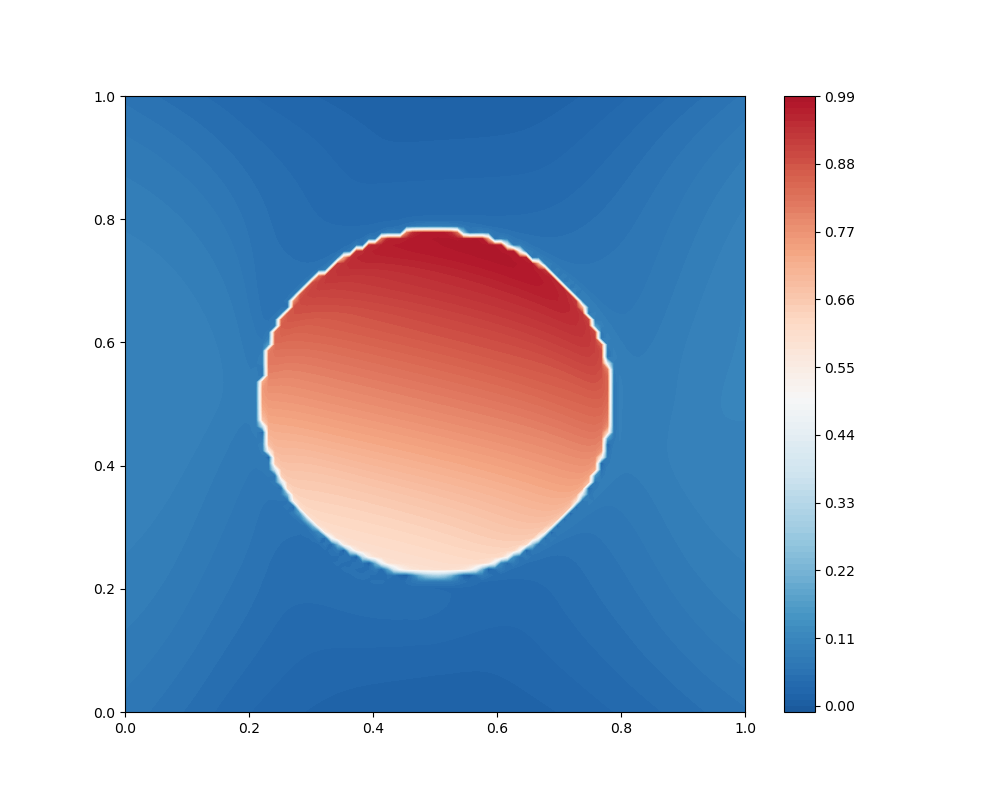}
    }
    \subcaptionbox{VSKs-$f$}{
    \includegraphics[trim={2.3cm 1.6cm 2.45cm 1.9cm},clip,width=0.38\textwidth]{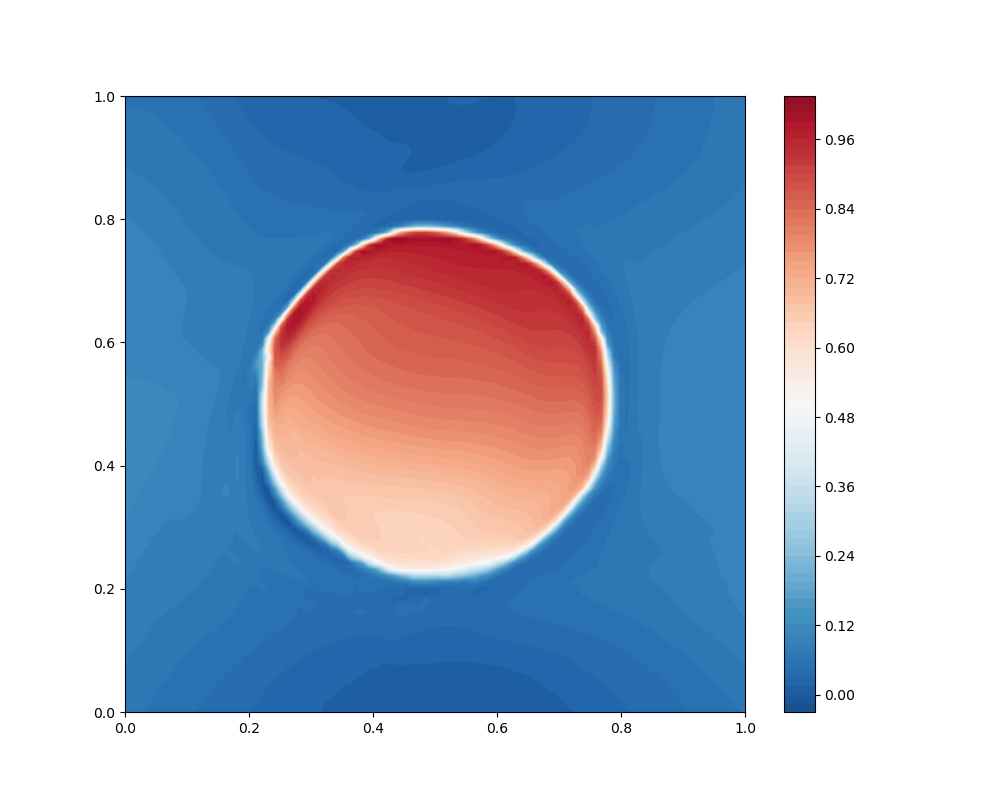}
    }
    \\
    \subcaptionbox{$\delta$NN-VSKs,\\ scaling func.}{
    \includegraphics[trim={2.3cm 1.6cm 2.45cm 1.9cm},clip,width=0.38\textwidth]{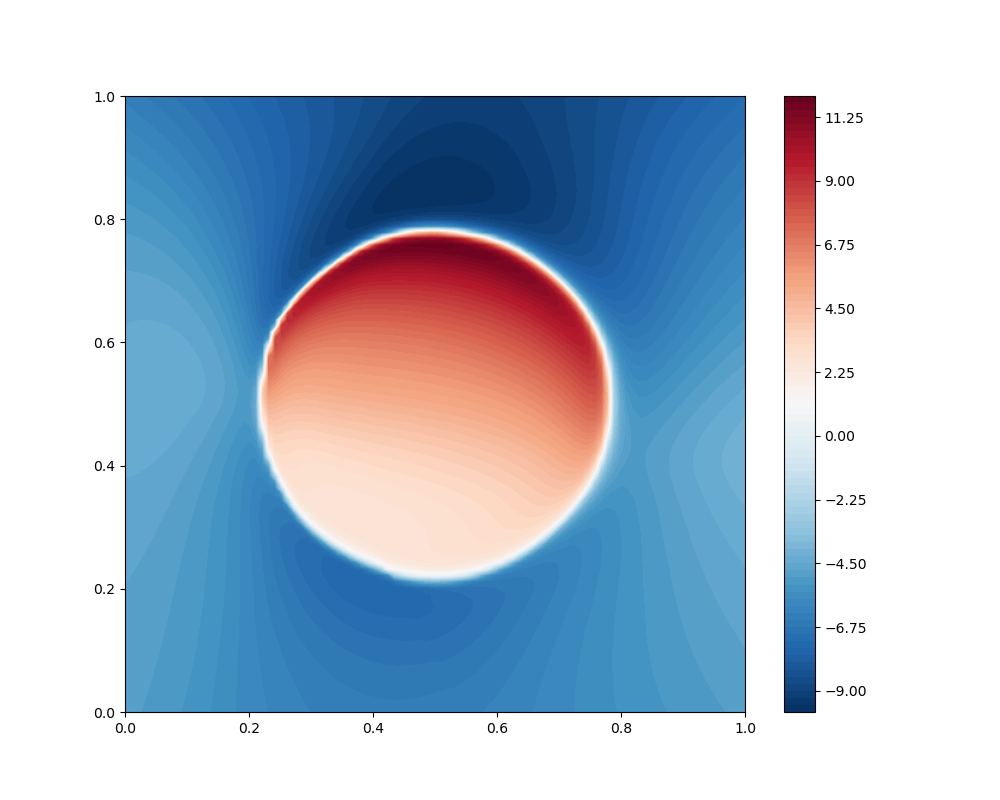}
    }
    \subcaptionbox{VSKs-$f$,\\ scaling func.}{
    \includegraphics[trim={2.3cm 1.6cm 2.45cm 1.9cm},clip,width=0.38\textwidth]{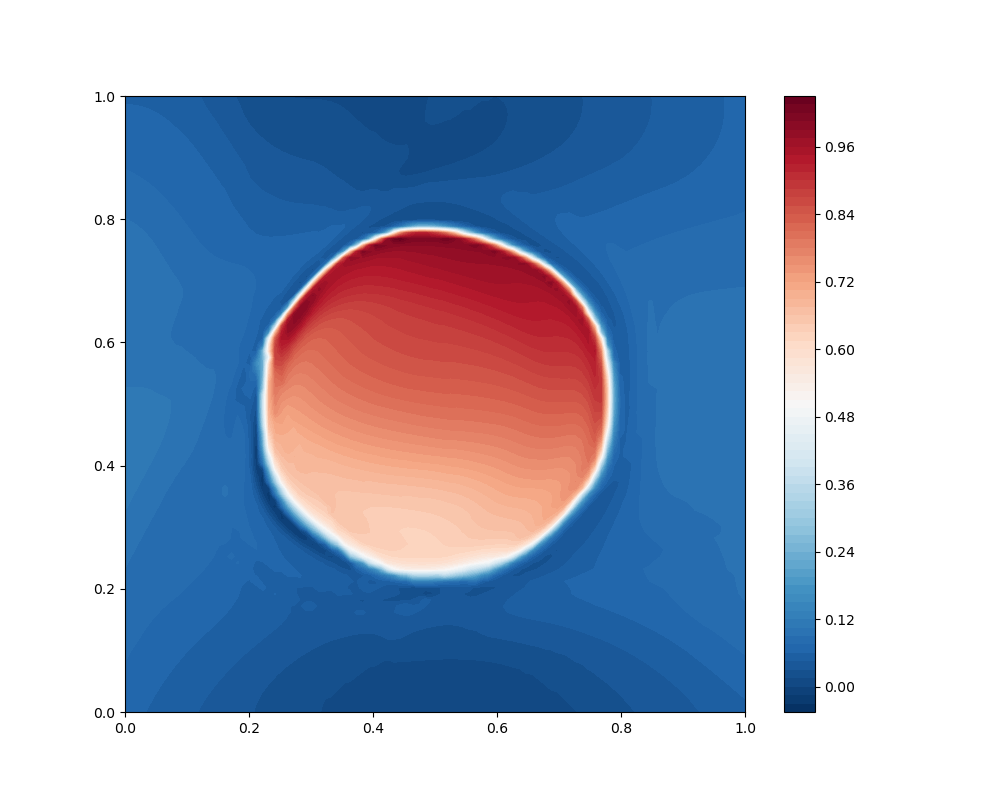}
    }
    \caption{{Interpolation results for $f_2$. Case $n=33^2=1089$ interpolation points, Mat\'ern-$C^2$ Kernel. Shared color bars for subfigures ($a$)-($d$); custom color bars for the scaling functions (subfigures ($e$) and ($f$)).}}
    \label{fig:f2_topview_comparison}
\end{figure}

\begin{figure}[htbp!]
    \centering
    \subcaptionbox{Target}{
    \includegraphics[trim={2.3cm 1.6cm 2.45cm 1.9cm},clip,width=0.38\textwidth]{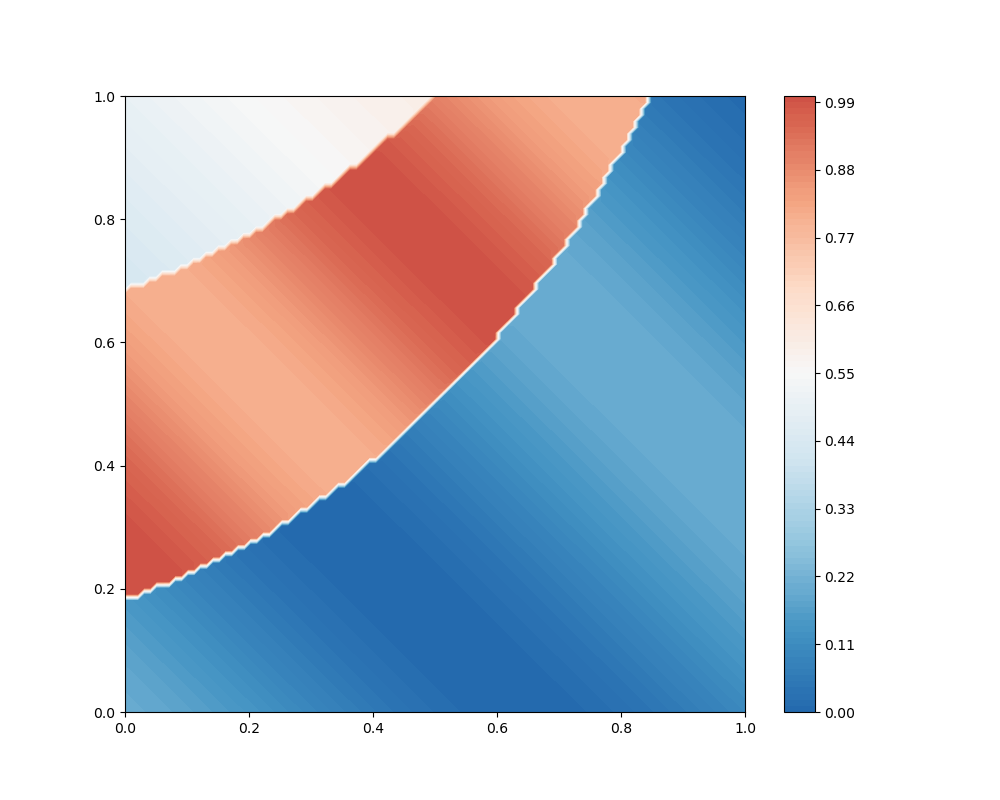}
    }
    \subcaptionbox{FSKs}{
    \includegraphics[trim={2.3cm 1.6cm 2.45cm 1.9cm},clip,width=0.38\textwidth]{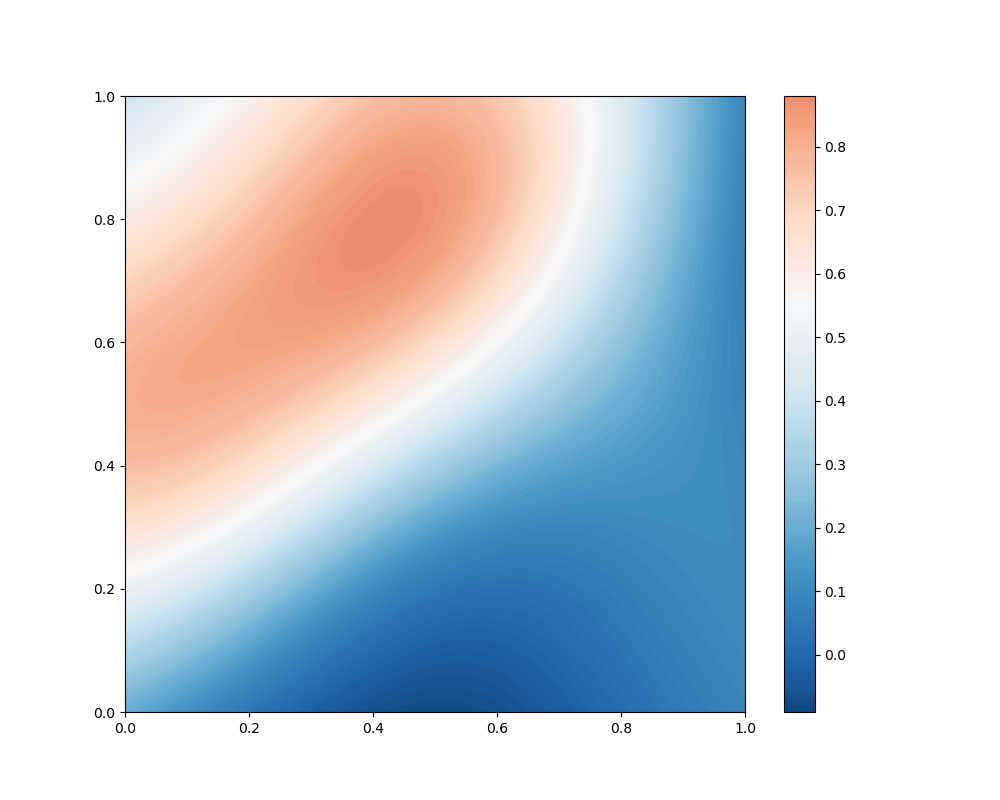}
    }
    \\
    \subcaptionbox{$\delta$NN-VSKs}{
    \includegraphics[trim={2.3cm 1.6cm 2.45cm 1.9cm},clip,width=0.38\textwidth]{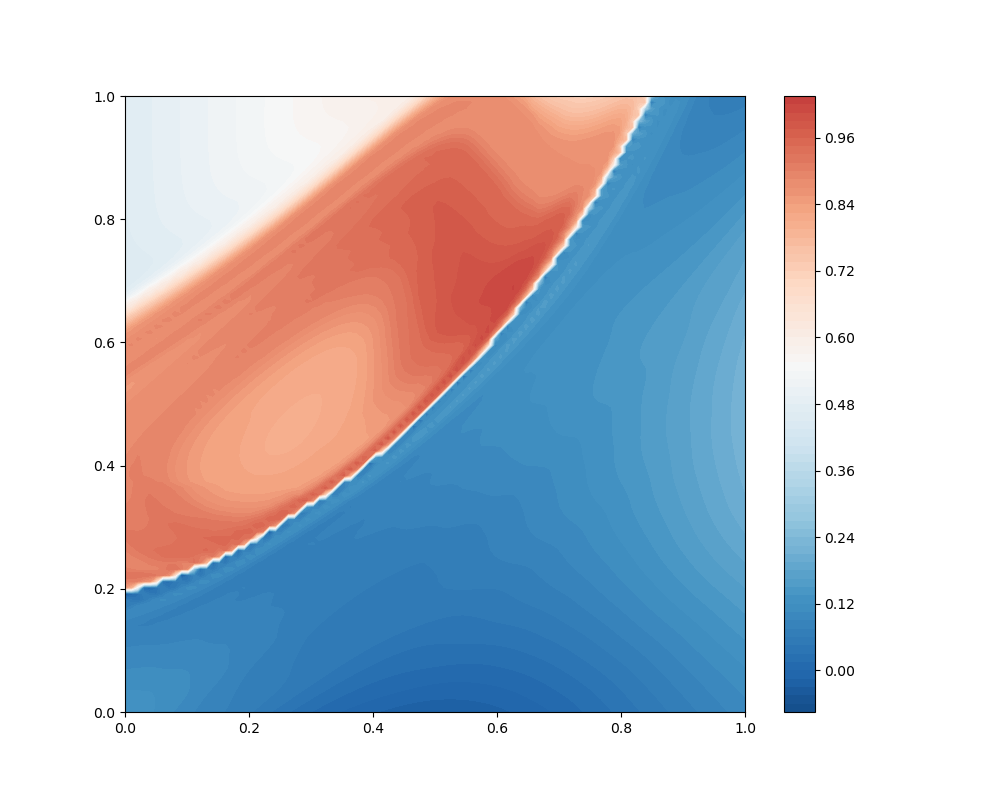}
    }
    \subcaptionbox{VSKs-$f$}{
    \includegraphics[trim={2.3cm 1.6cm 2.45cm 1.9cm},clip,width=0.38\textwidth]{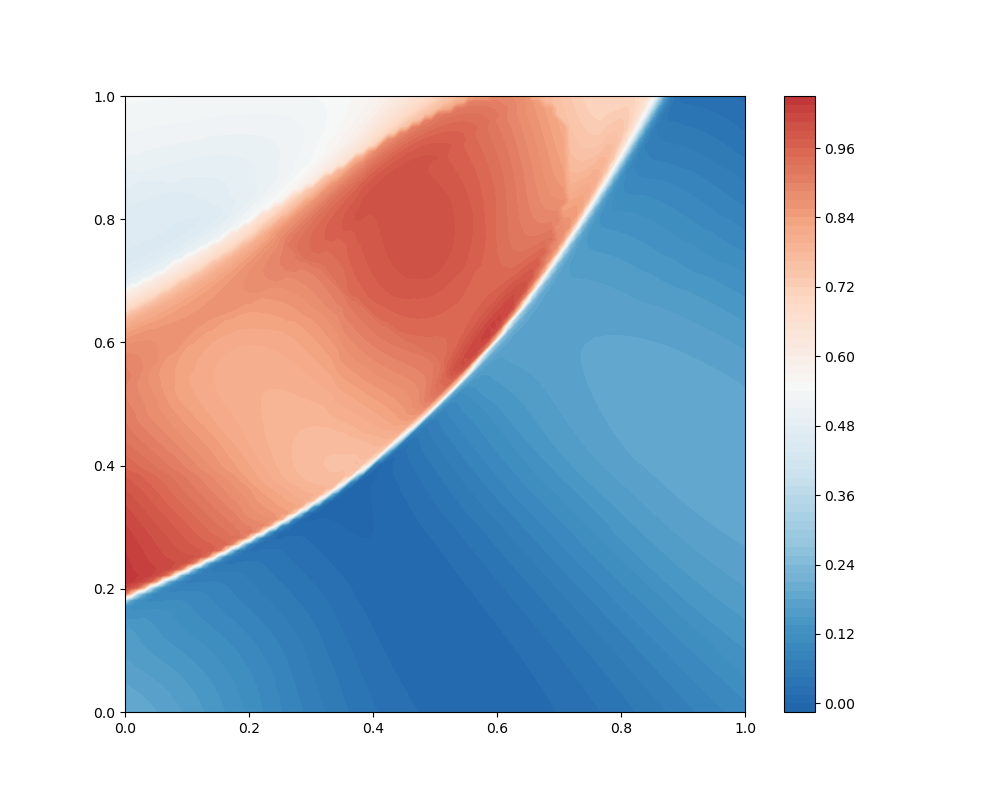}
    }
    \\
    \subcaptionbox{$\delta$NN-VSKs,\\ scaling func.}{
    \includegraphics[trim={2.3cm 1.6cm 2.45cm 1.9cm},clip,width=0.38\textwidth]{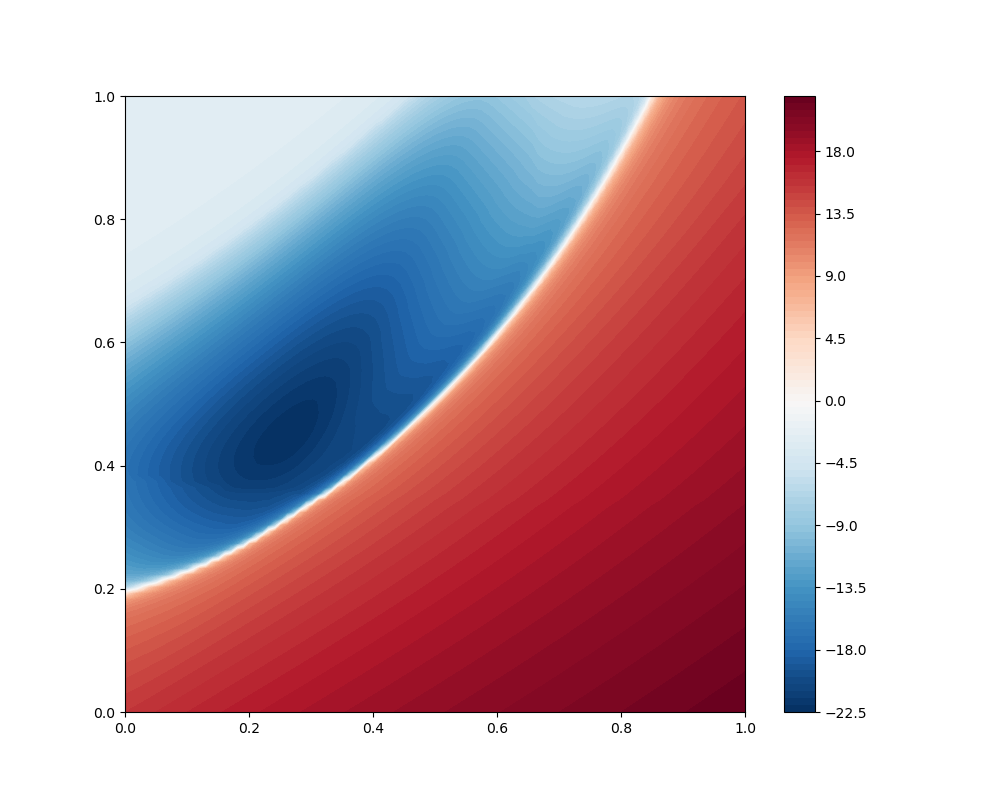}
    }
    \subcaptionbox{VSKs-$f$,\\ scaling func.}{
    \includegraphics[trim={2.3cm 1.6cm 2.45cm 1.9cm},clip,width=0.38\textwidth]{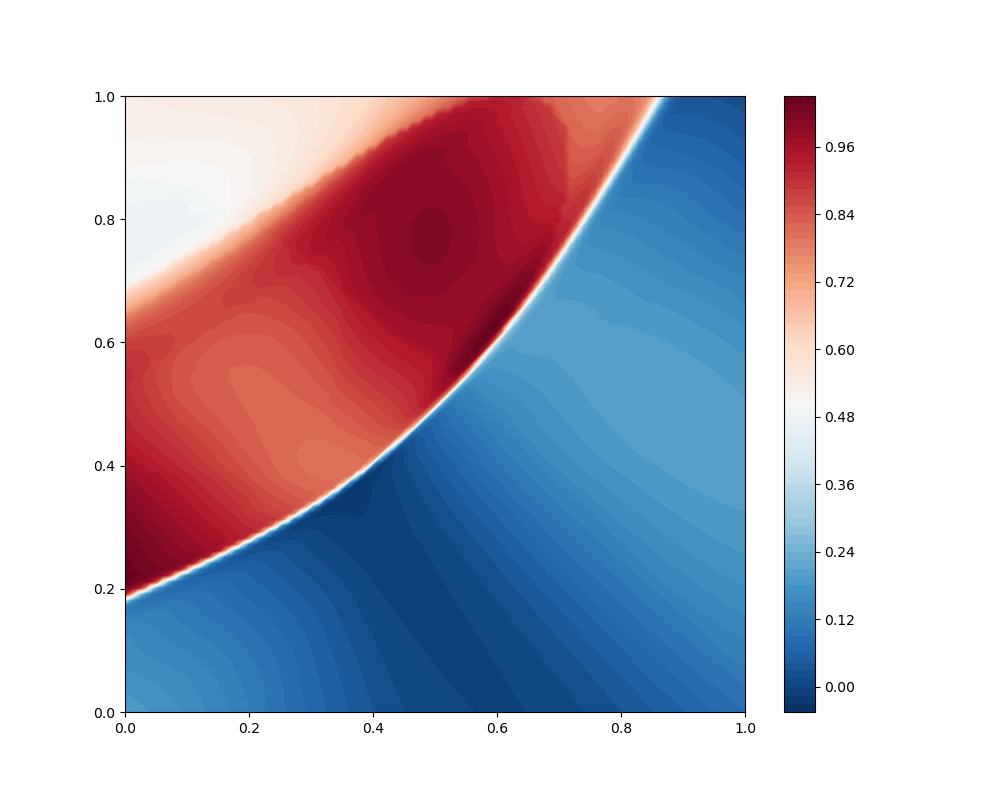}
    }
    \caption{{Interpolation results for $f_3$. Case $n=33^2=1089$ interpolation points, Mat\'ern-$C^2$ Kernel. Shared color bars for subfigures ($a$)-($d$); custom color bars for the scaling functions (subfigures ($e$) and ($f$))}.}
    \label{fig:f3_topview_comparison}
\end{figure}

\begin{figure}[htbp!]
     \centering
     \subcaptionbox{Target}{
     \includegraphics[trim={2.3cm 1.6cm 2.45cm 1.9cm},clip,width=0.38\textwidth]{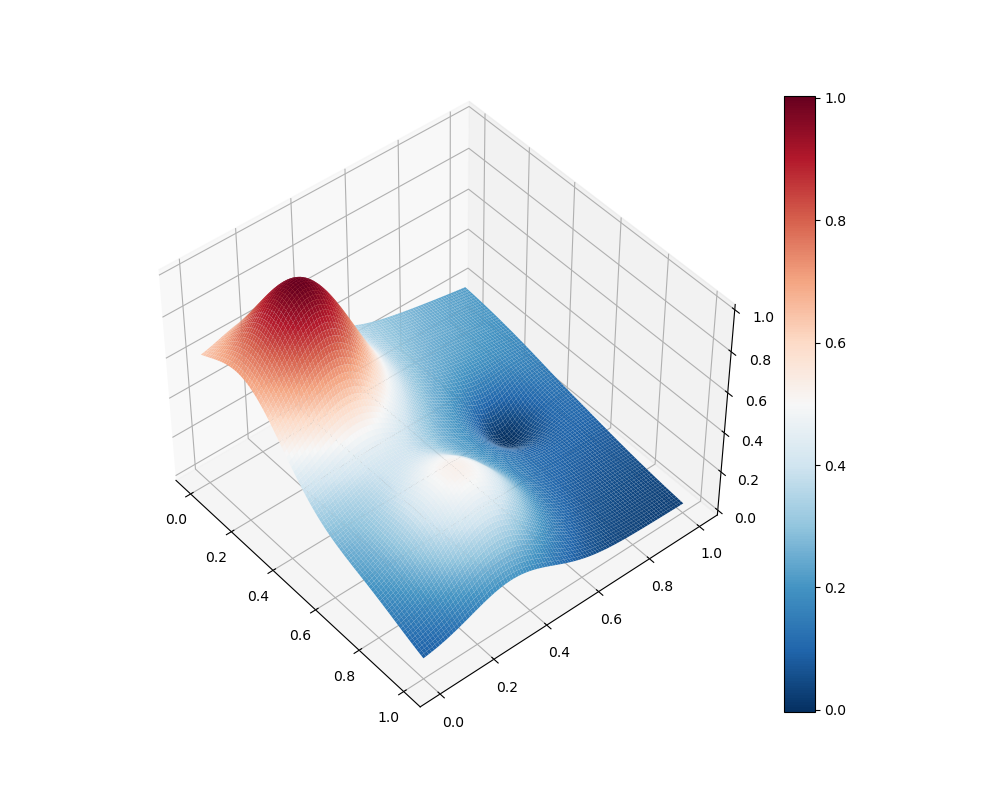}
     }
     \\
     \subcaptionbox{FSKs}{
     \includegraphics[trim={2.3cm 1.6cm 5.55cm 1.9cm},clip,width=0.28\textwidth]{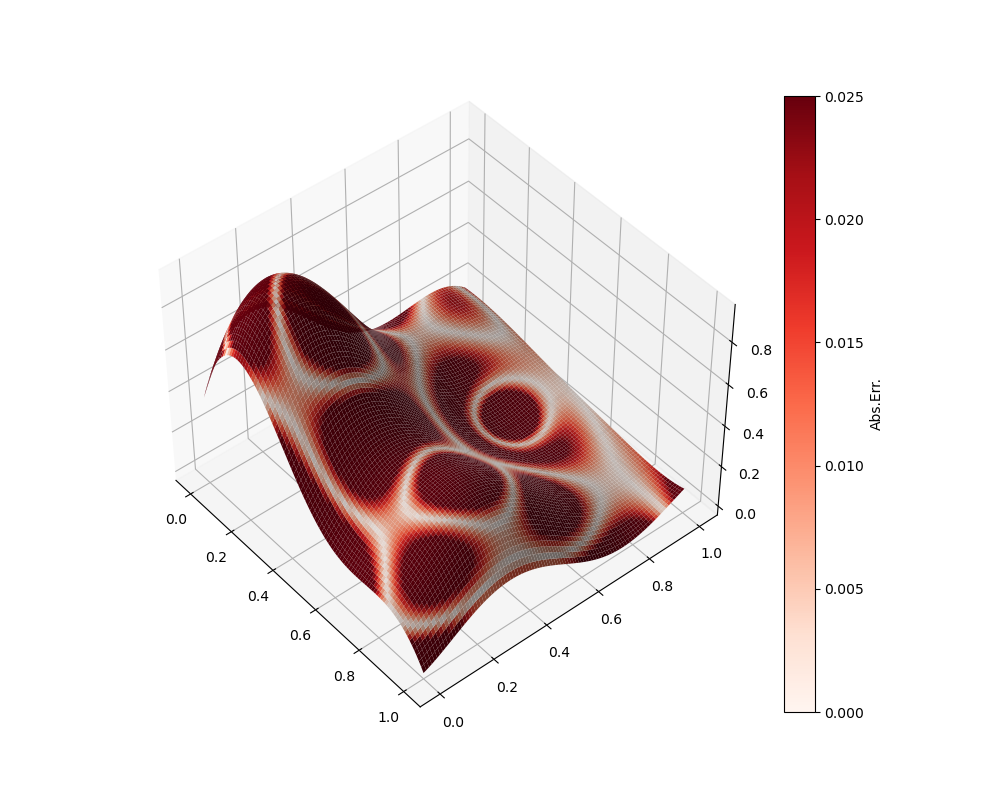}
     }
     \subcaptionbox{$\delta$NN-VSKs}{
     \includegraphics[trim={2.3cm 1.6cm 5.55cm 1.9cm},clip,width=0.28\textwidth]{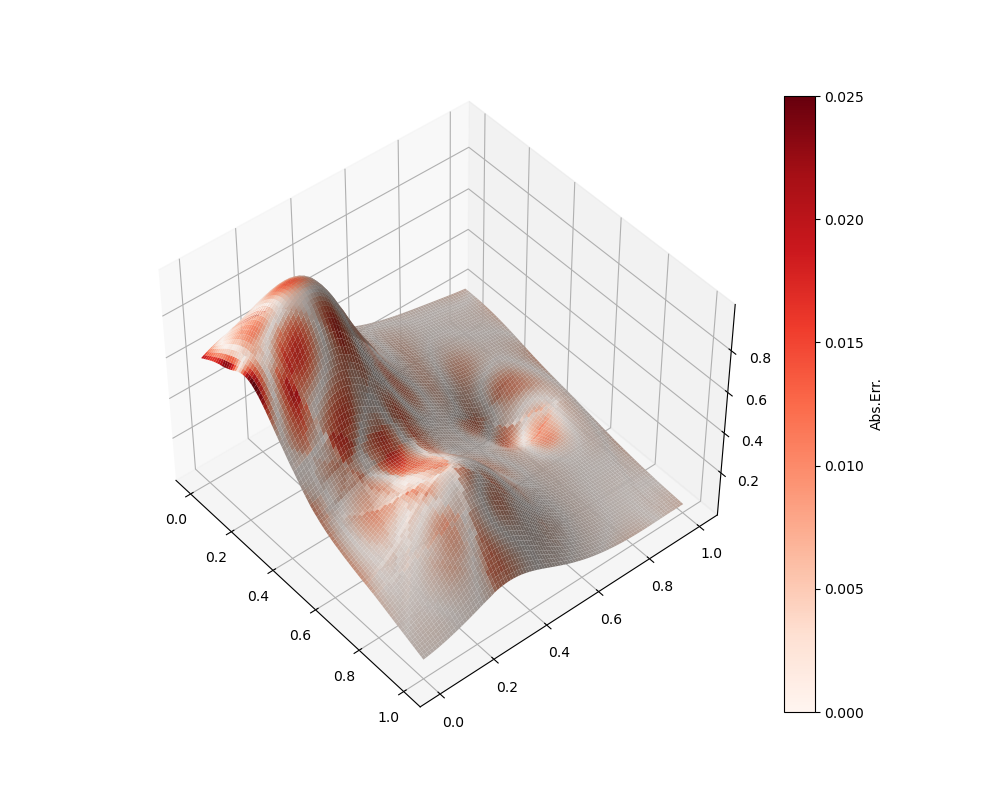}
     }
     \subcaptionbox{VSKs-$f$}{
     \includegraphics[trim={2.3cm 1.6cm 2.45cm 1.9cm},clip,width=0.325\textwidth]{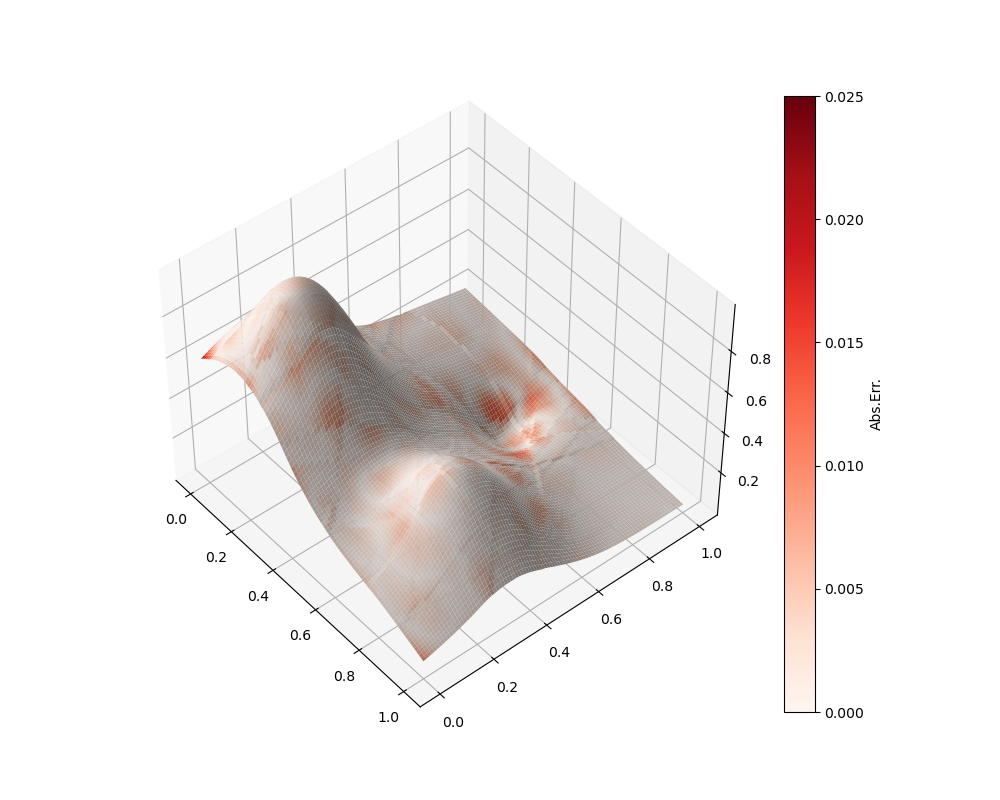}
     }
     \\
     \subcaptionbox{FSKs}{
     \includegraphics[trim={2.3cm 1.6cm 5.55cm 1.9cm},clip,width=0.28\textwidth]{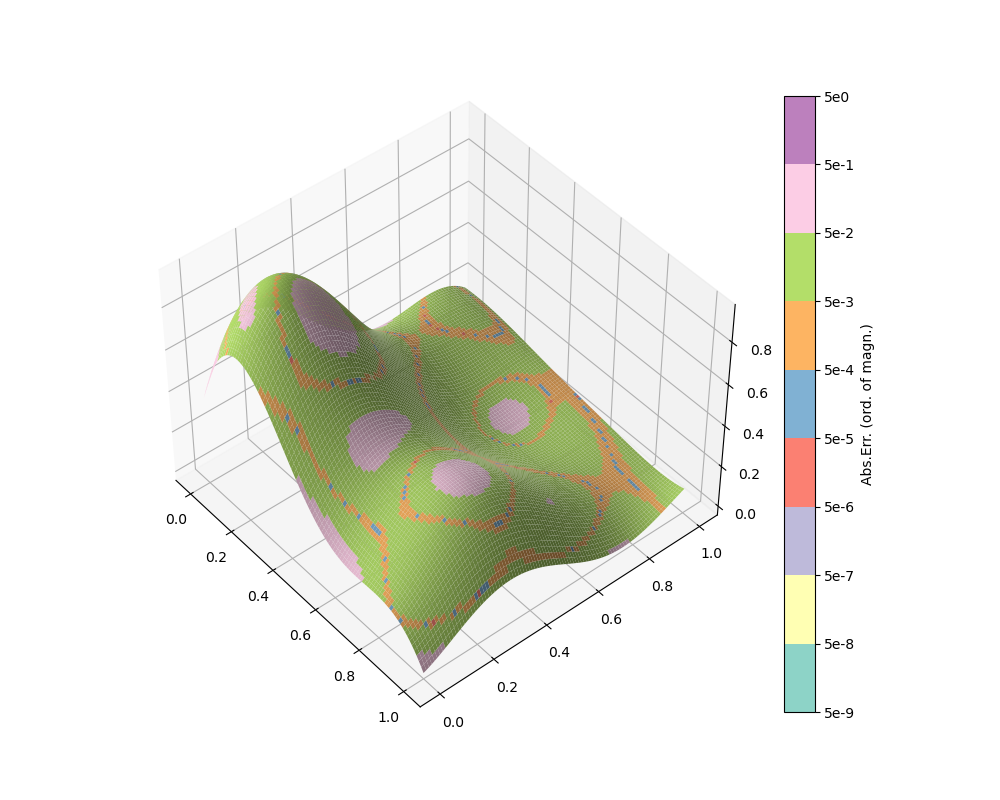}
     }
     \subcaptionbox{$\delta$NN-VSKs}{
     \includegraphics[trim={2.3cm 1.6cm 5.55cm 1.9cm},clip,width=0.28\textwidth]{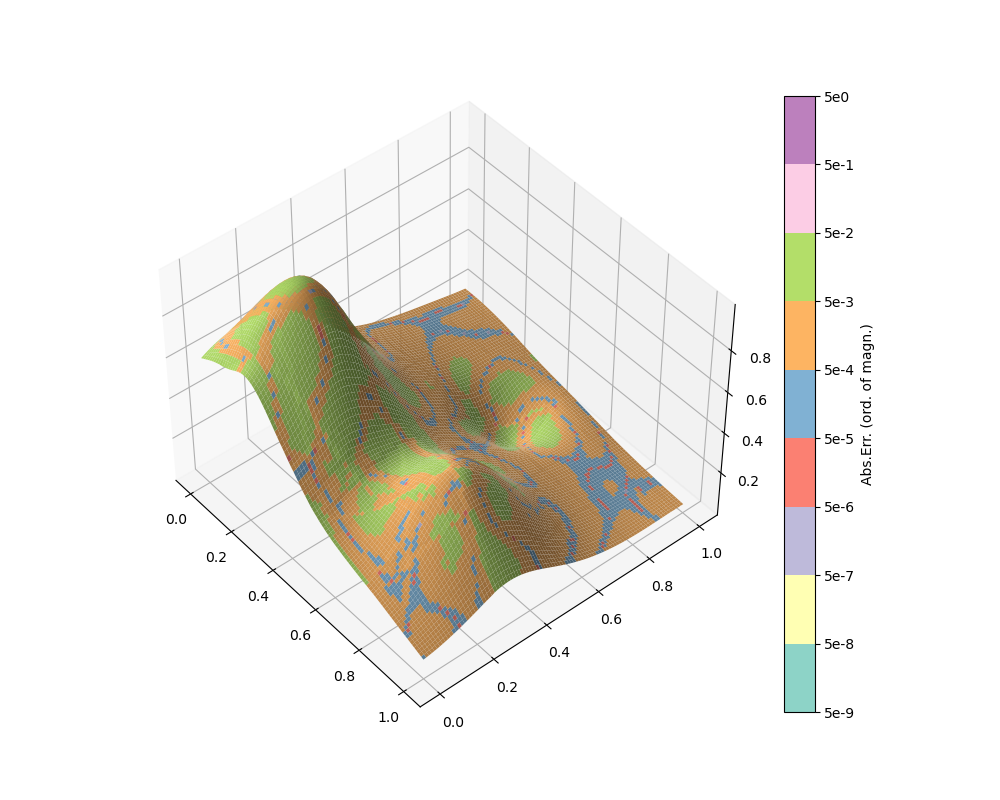}
     }
     \subcaptionbox{VSKs-$f$}{
    \includegraphics[trim={2.3cm 1.6cm 2.45cm 1.9cm},clip,width=0.325\textwidth]{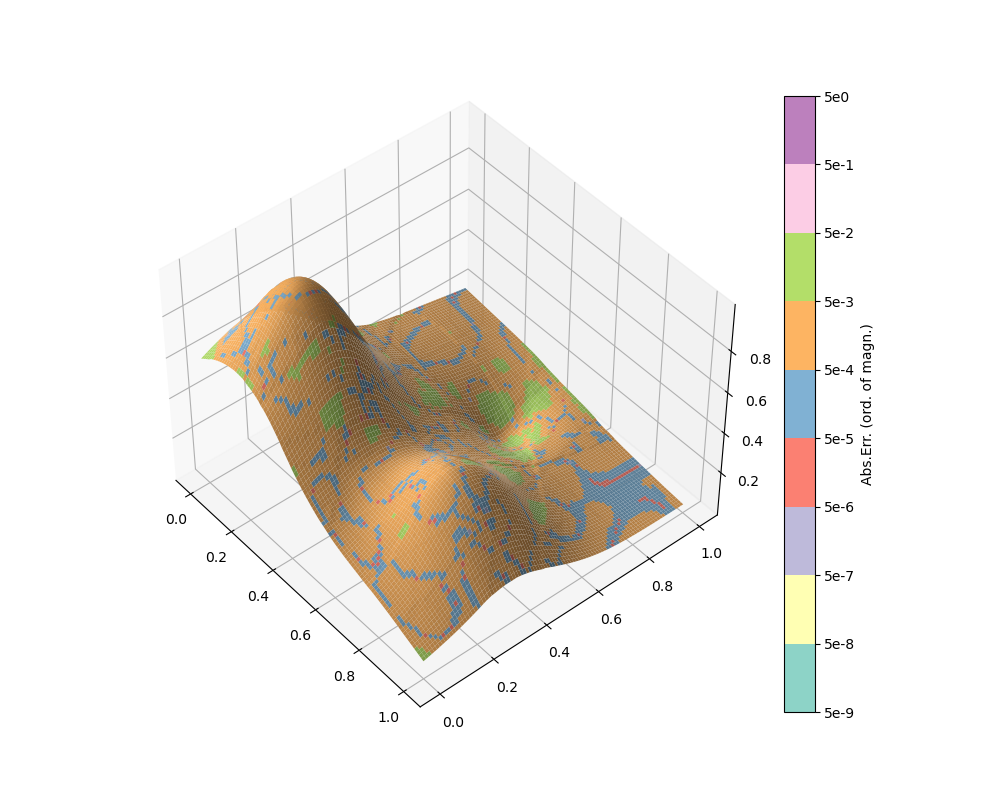}
    }
    \caption{{Interpolation results for $f_1$ (see Figure \ref{fig:f1_topview_comparison}), colored w.r.t. interpolation error (absolute error). In the bottom row, colors describe the order of magnitude of the absolute error. Shared colorscales for subfigures ($b$)-($d$) and for subfigures ($e$)-($g$).}}
    \label{fig:f1_surface_errors_horizontal}
 \end{figure}
 
  \begin{figure}[htbp!]
     \centering
     \subcaptionbox{Target}{
     \includegraphics[trim={2.3cm 1.6cm 2.45cm 1.9cm},clip,width=0.38\textwidth]{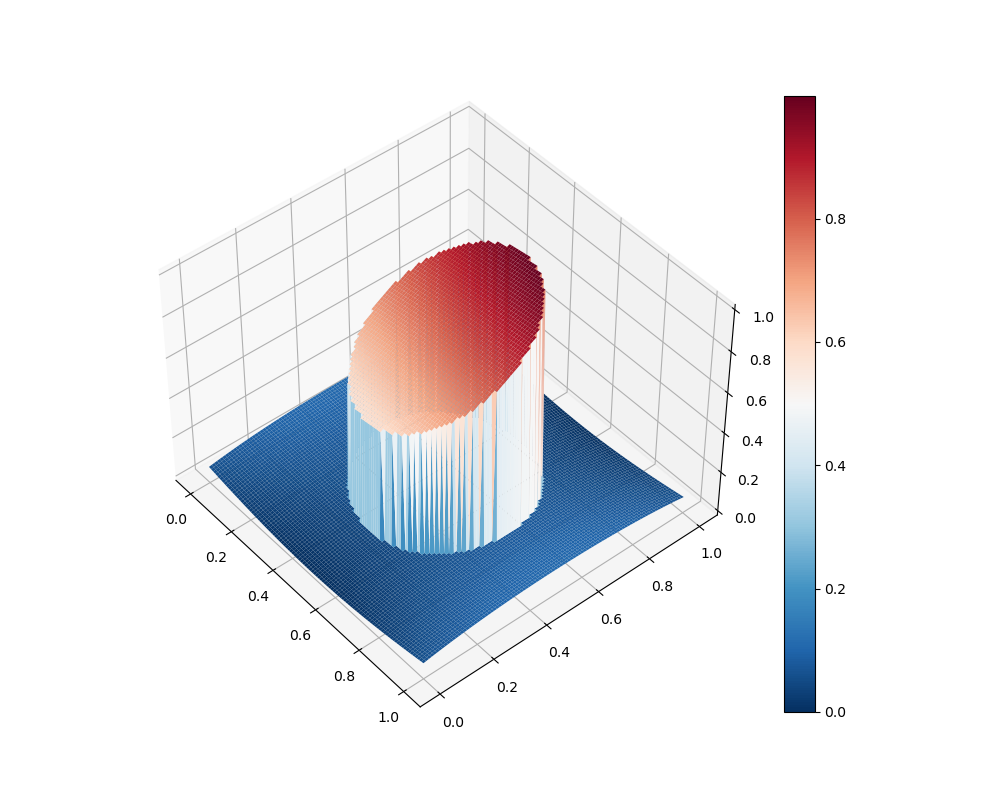}
     }
     \\
     \subcaptionbox{FSKs}{
     \includegraphics[trim={2.3cm 1.6cm 5.55cm 1.9cm},clip,width=0.28\textwidth]{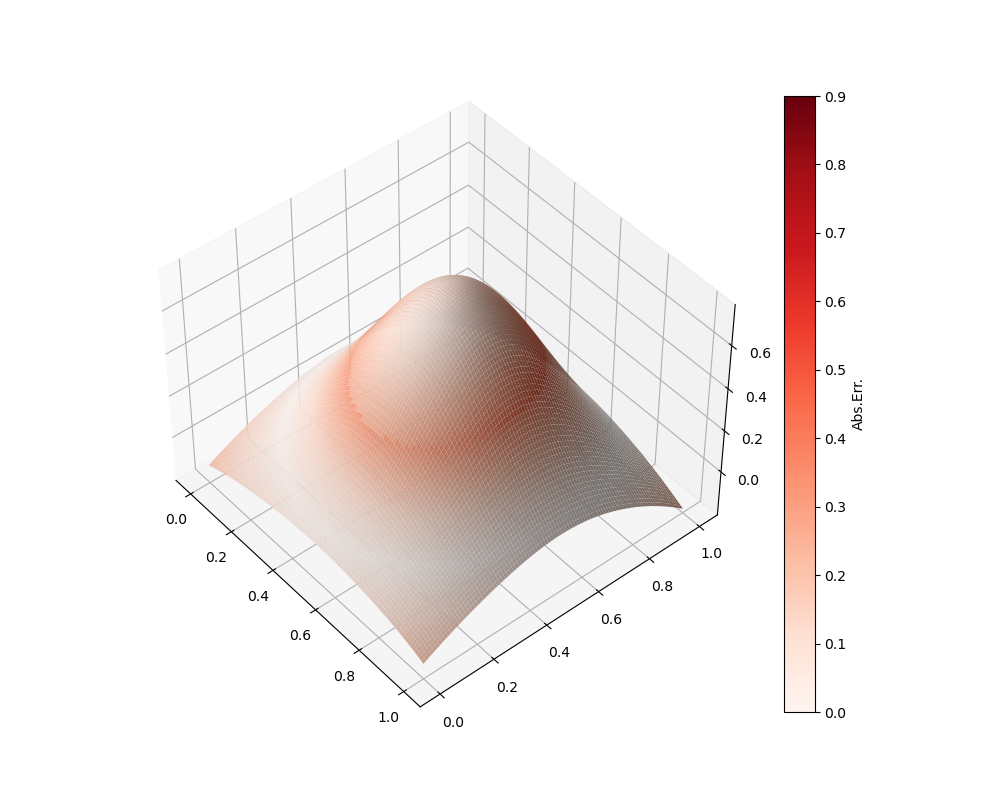}
     }
     \subcaptionbox{$\delta$NN-VSKs}{
     \includegraphics[trim={2.3cm 1.6cm 5.55cm 1.9cm},clip,width=0.28\textwidth]{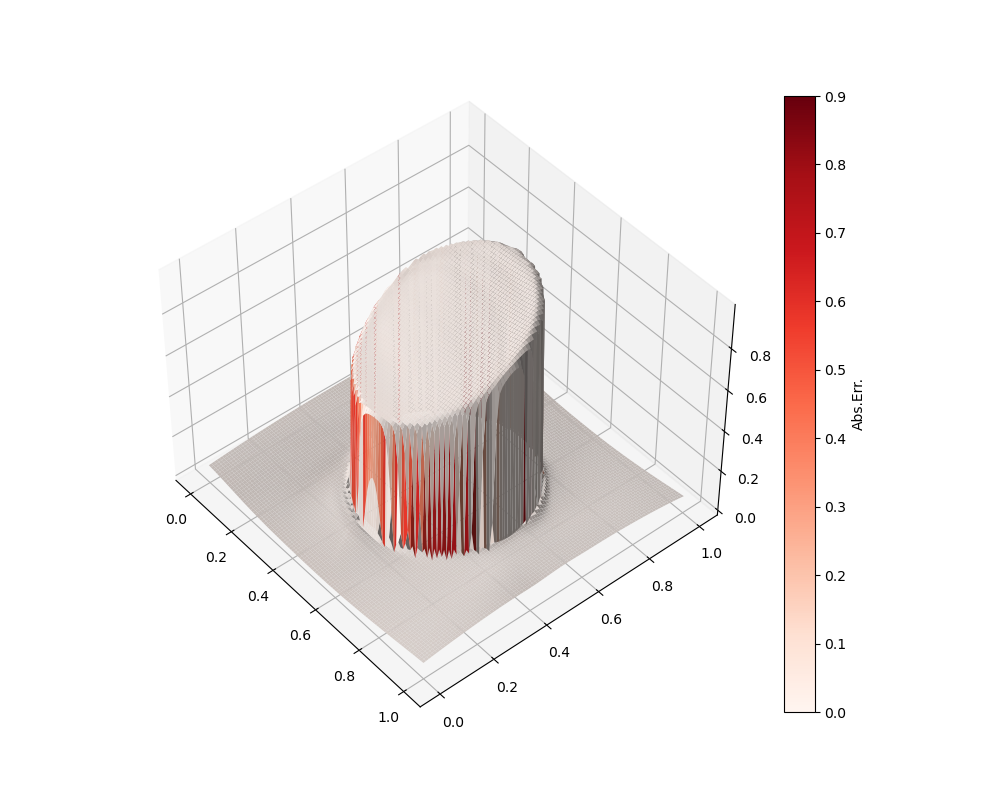}
     }
     \subcaptionbox{VSKs-$f$}{
     \includegraphics[trim={2.3cm 1.6cm 2.45cm 1.9cm},clip,width=0.325\textwidth]{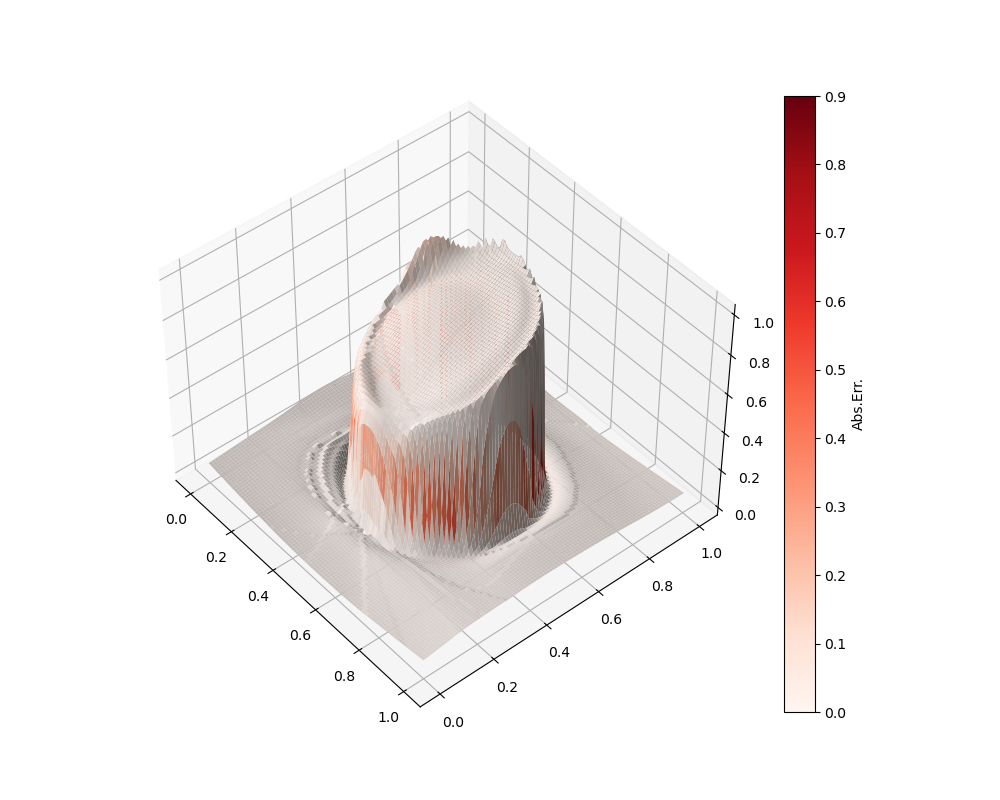}
     }
     \\
     \subcaptionbox{FSKs}{
     \includegraphics[trim={2.3cm 1.6cm 5.55cm 1.9cm},clip,width=0.28\textwidth]{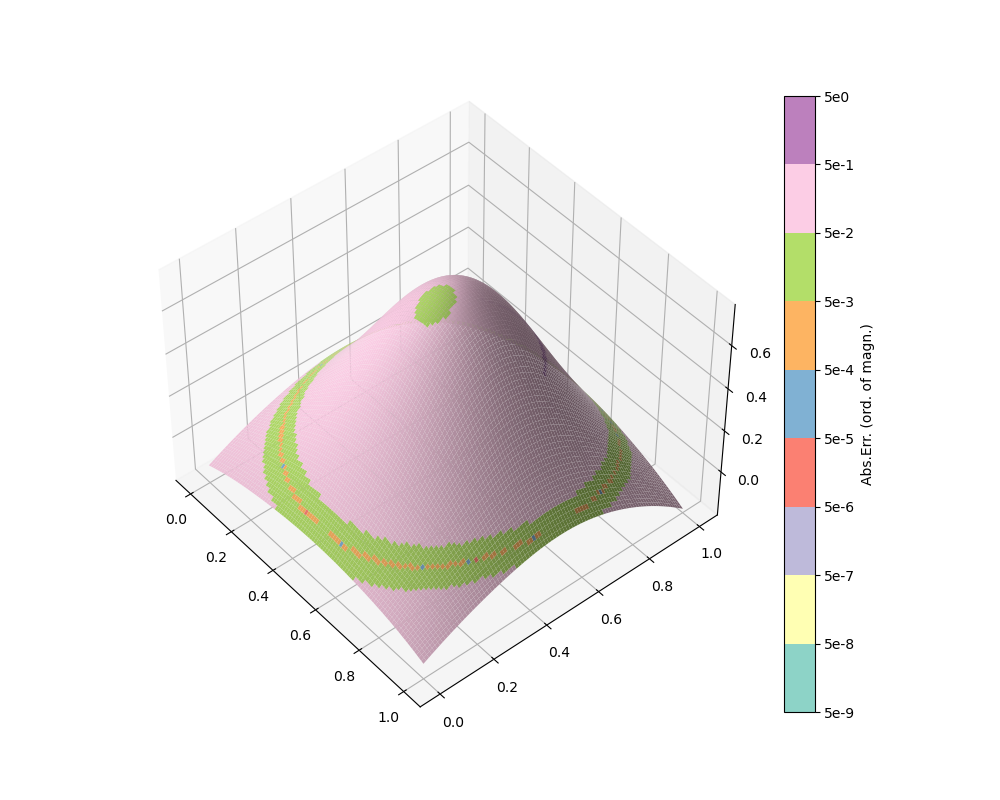}
     }
     \subcaptionbox{$\delta$NN-VSKs}{
     \includegraphics[trim={2.3cm 1.6cm 5.55cm 1.9cm},clip,width=0.28\textwidth]{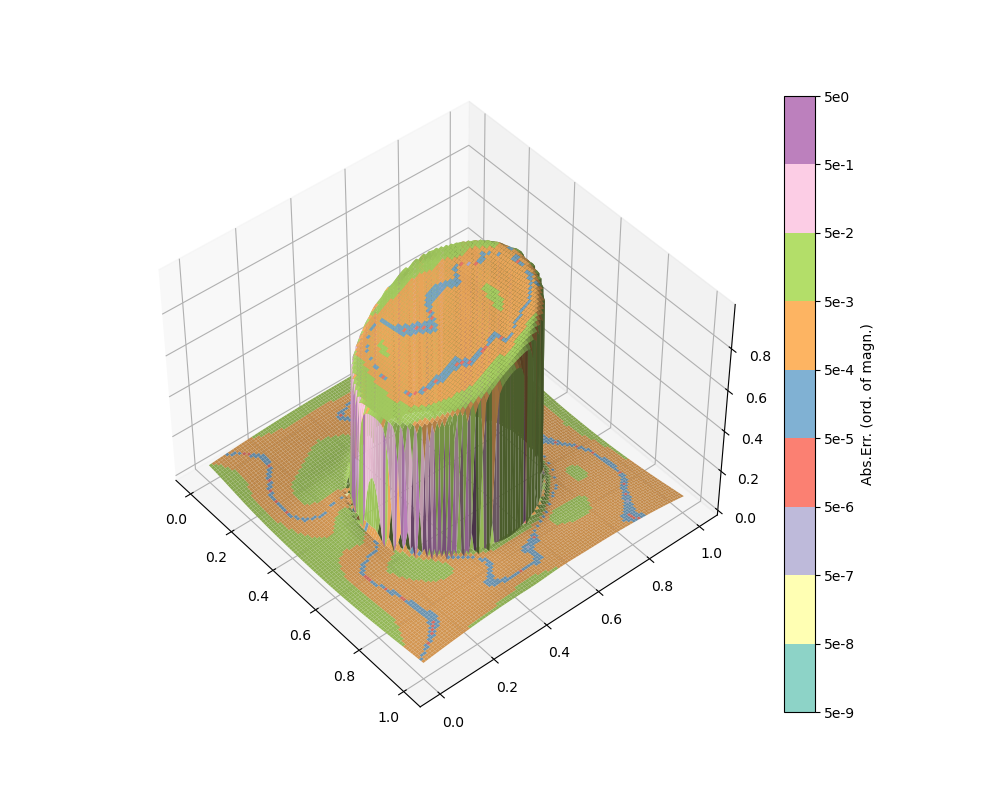}
     }
     \subcaptionbox{VSKs-$f$}{
     \includegraphics[trim={2.3cm 1.6cm 2.45cm 1.9cm},clip,width=0.325\textwidth]{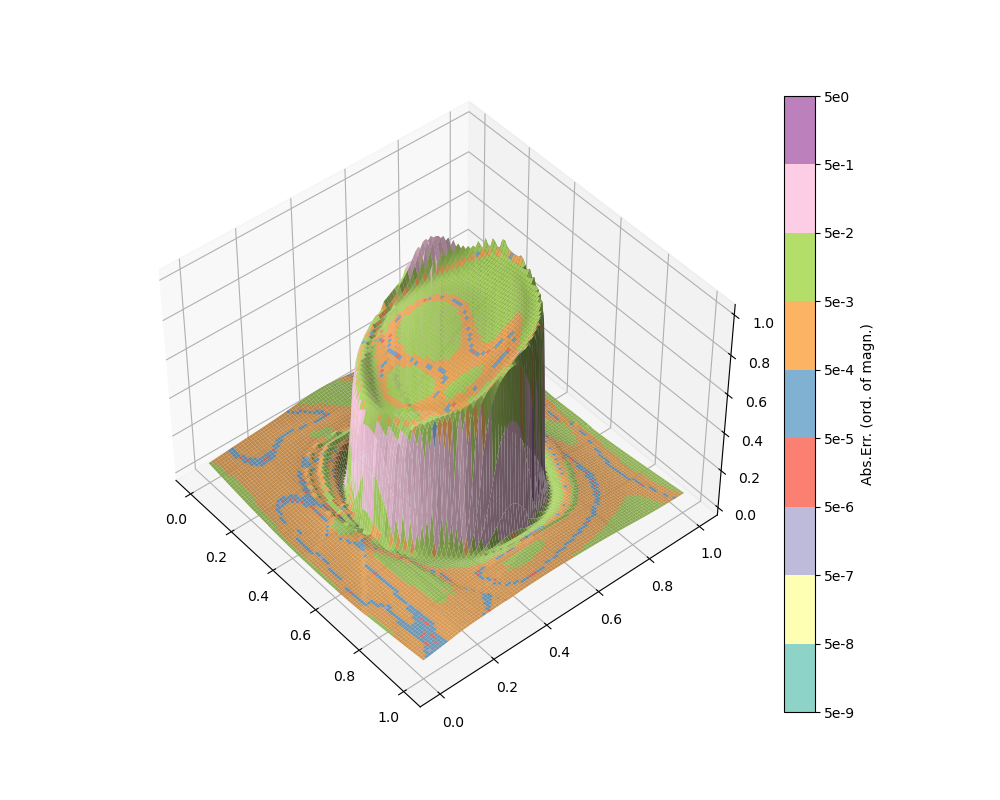}
     }
     \caption{{Interpolation results for $f_2$ (see Figure \ref{fig:f2_topview_comparison}), colored w.r.t. interpolation error (absolute error). In the bottom row, colors describe the order of magnitude of the absolute error. Shared color bars for subfigures ($b$)-($d$) and for subfigures ($e$)-($g$).}}
     \label{fig:f2_surface_errors_horizontal}
 \end{figure}

\begin{figure}[htbp!]
     \centering
     \subcaptionbox{Target}{
     \includegraphics[trim={2.3cm 1.6cm 2.45cm 1.9cm},clip,width=0.38\textwidth]{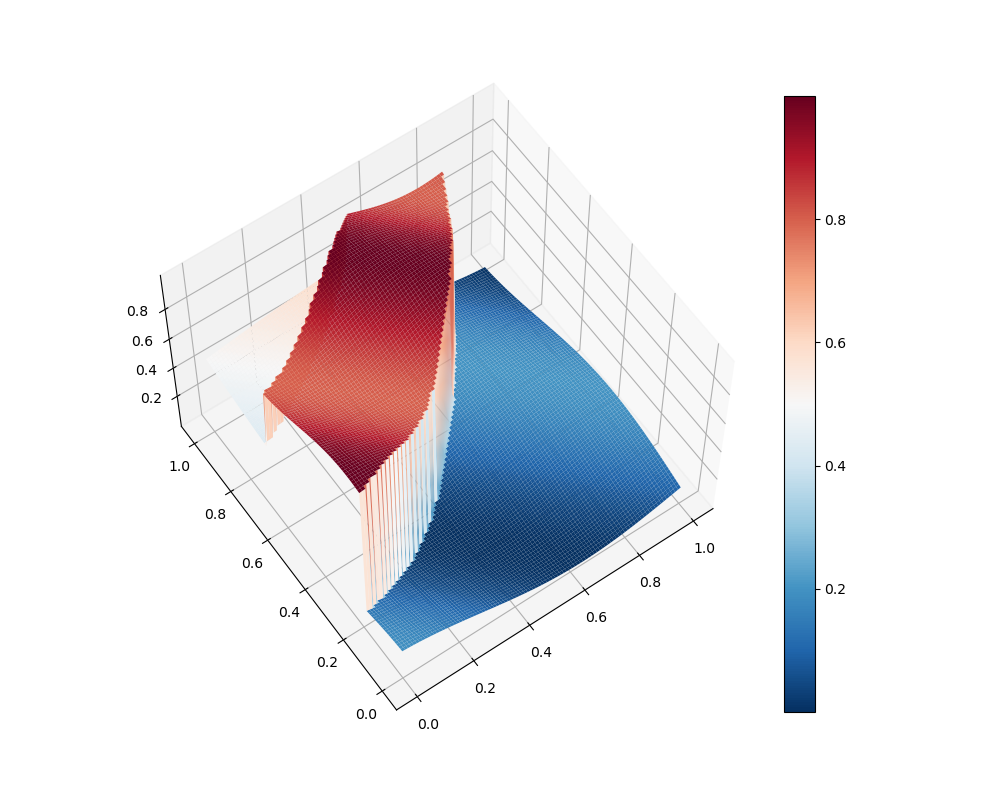}
     }
     \\
     \subcaptionbox{FSKs}{
     \includegraphics[trim={2.3cm 1.6cm 5.55cm 1.9cm},clip,width=0.28\textwidth]{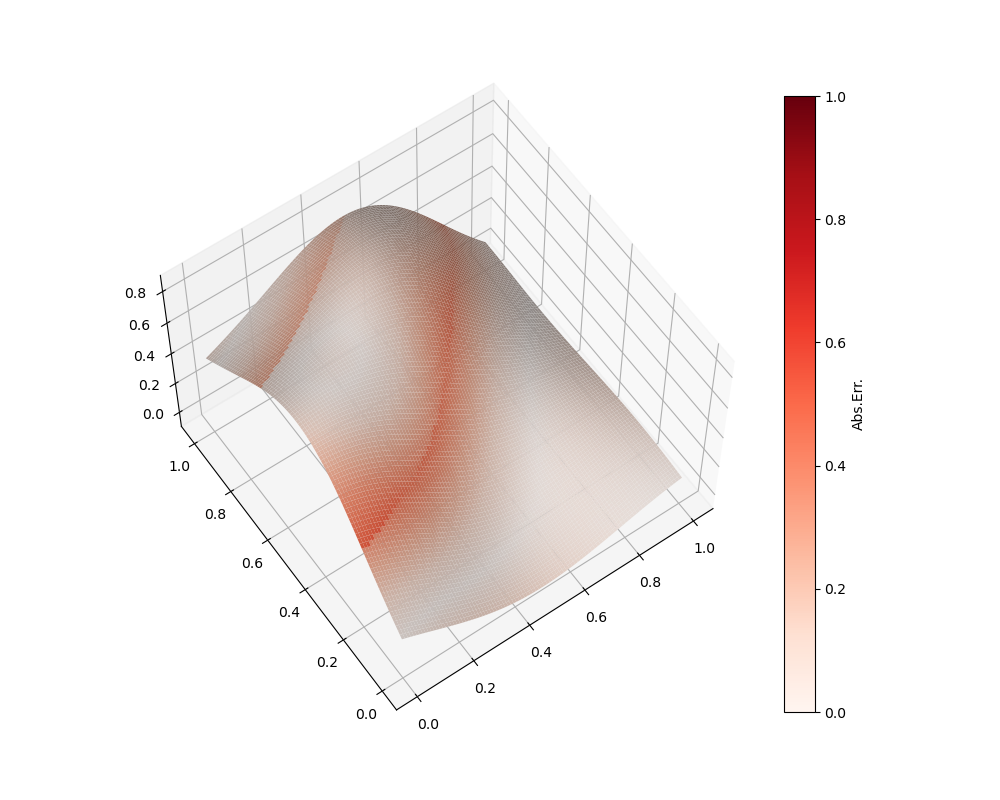}
     }
     \subcaptionbox{$\delta$NN-VSKs}{
     \includegraphics[trim={2.3cm 1.6cm 5.55cm 1.9cm},clip,width=0.28\textwidth]{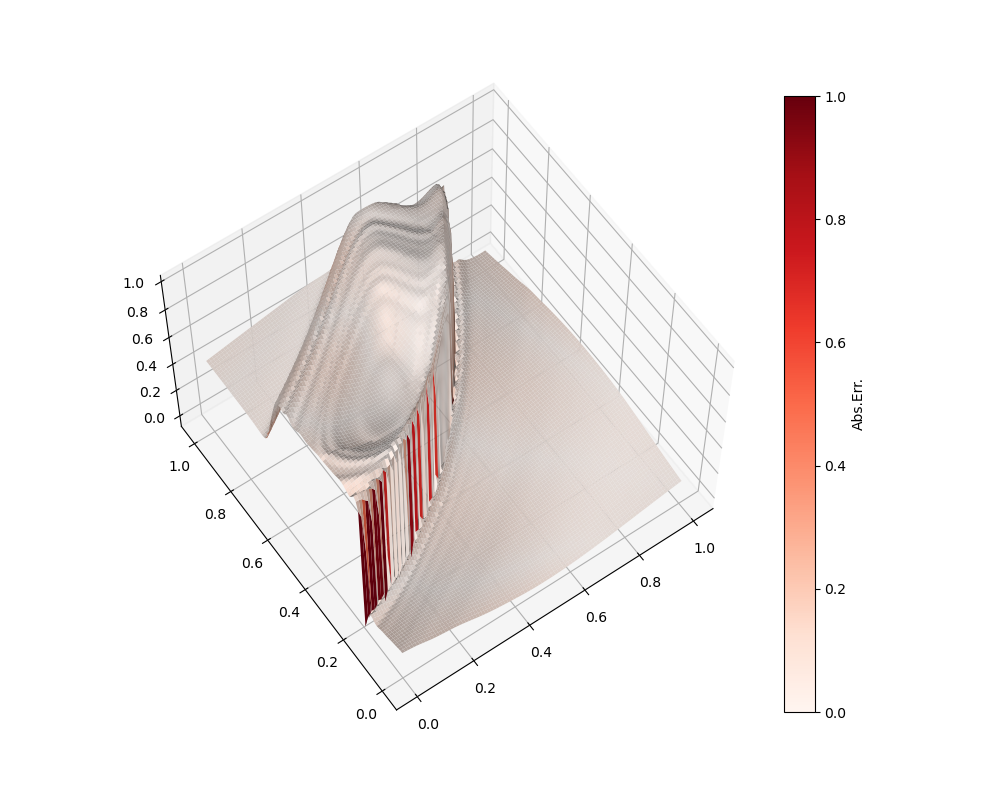}
     }
     \subcaptionbox{VSKs-$f$}{
     \includegraphics[trim={2.3cm 1.6cm 2.45cm 1.9cm},clip,width=0.325\textwidth]{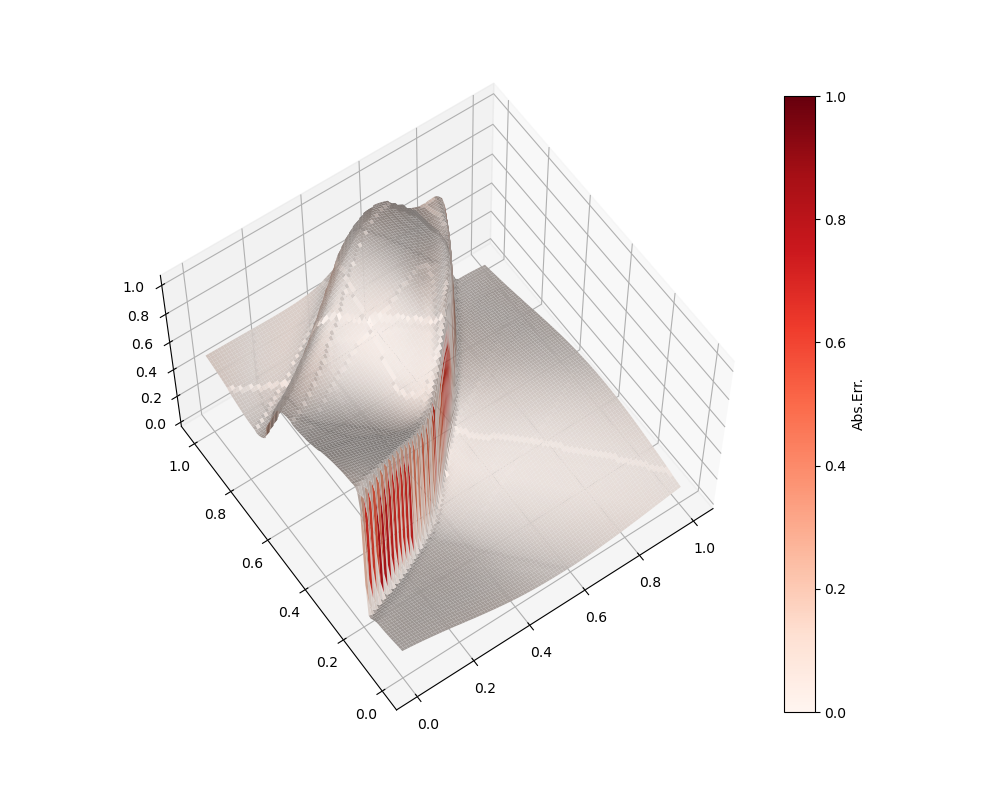}
     }
     \\
     \subcaptionbox{FSKs}{
     \includegraphics[trim={2.3cm 1.6cm 5.55cm 1.9cm},clip,width=0.28\textwidth]{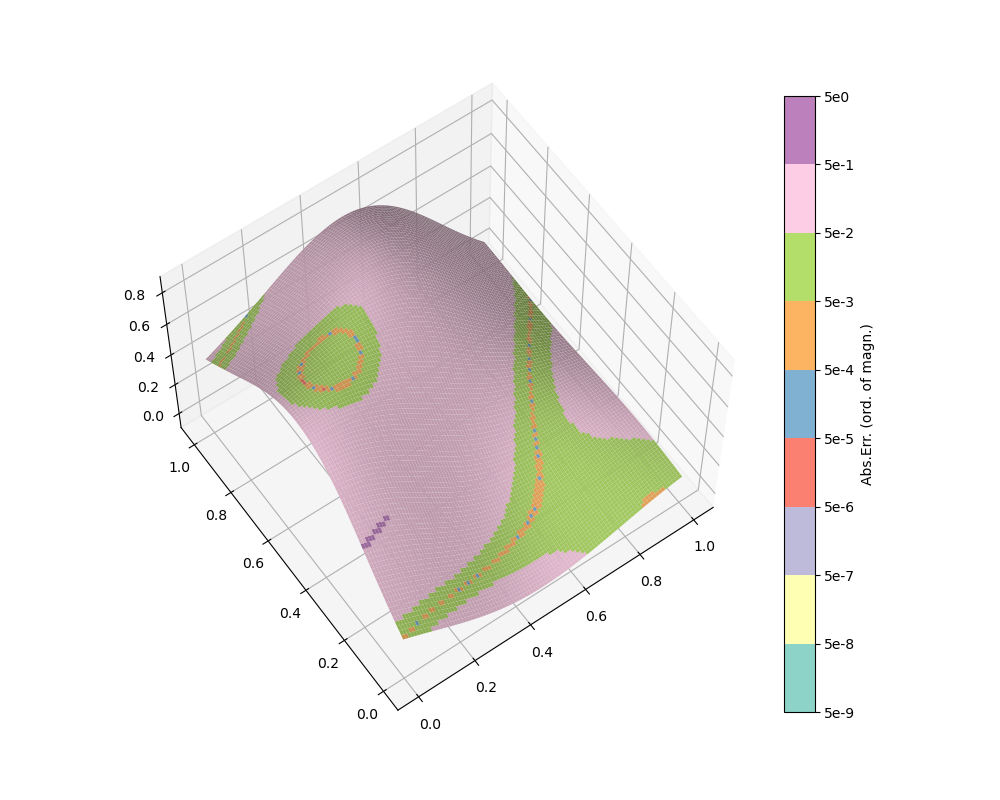}
     }
     \subcaptionbox{$\delta$NN-VSKs}{
     \includegraphics[trim={2.3cm 1.6cm 5.55cm 1.9cm},clip,width=0.28\textwidth]{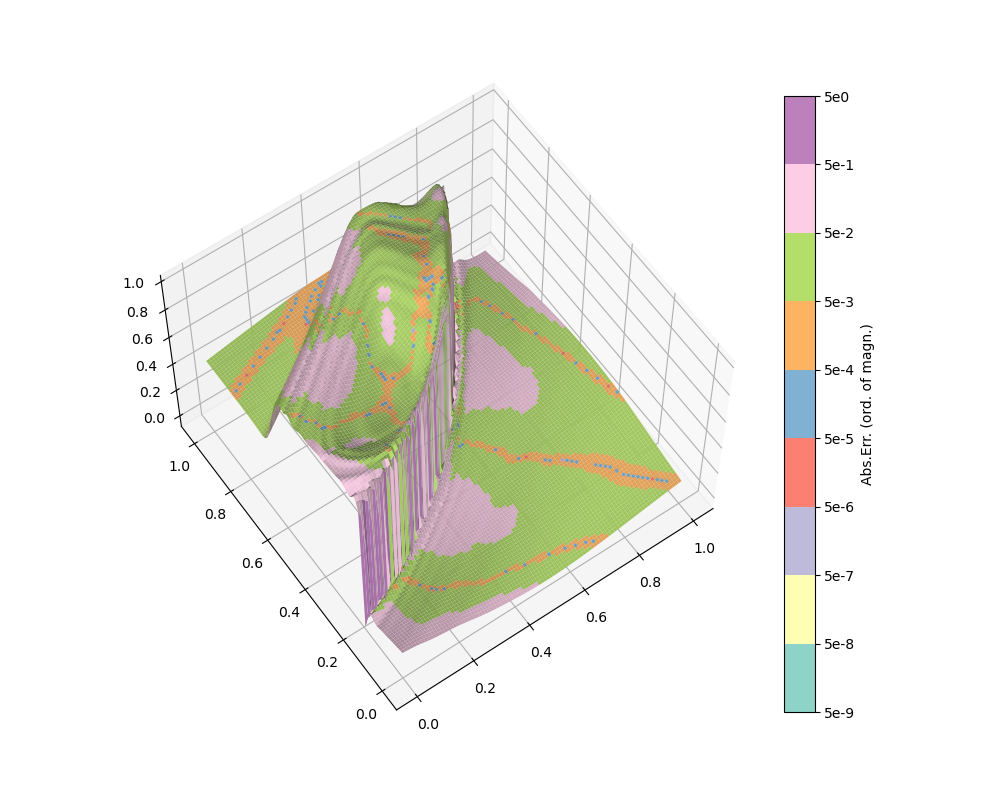}
     }
     \subcaptionbox{VSKs-$f$}{
     \includegraphics[trim={2.3cm 1.6cm 2.45cm 1.9cm},clip,width=0.325\textwidth]{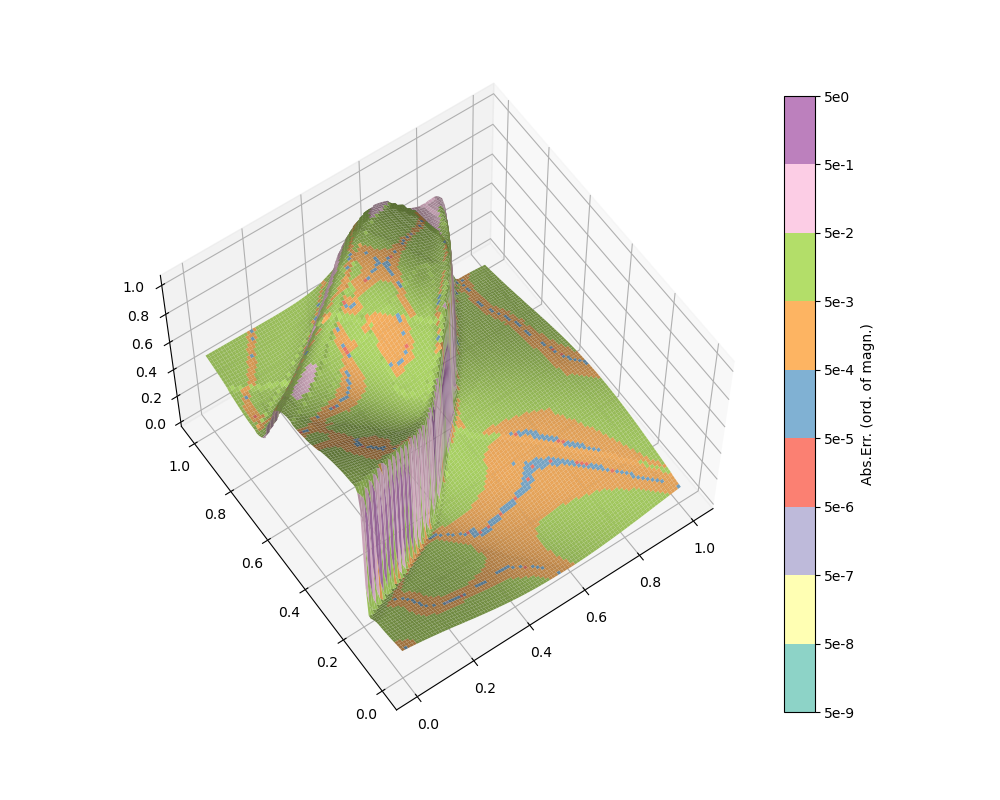}
     }
     \caption{{Interpolation results for $f_3$ (see Figure \ref{fig:f3_topview_comparison}), colored w.r.t. interpolation error (absolute error). In the bottom row, colors describe the order of magnitude of the absolute error. Shared color bars for subfigures ($b$)-($d$) and for subfigures ($e$)-($g$).}}
     \label{fig:f3_surface_errors_horizontal}
 \end{figure}

{Concluding the analyses of the interpolation results for the three test functions $f_1$, $f_2$, $f_3$, we measure the NN models training time, for both the $\delta$NN-VSK method and the VSK-$f$ method. All the training procedures have been performed on a Laptop LENOVO 21HFS04Q00 ThinkPad P14s Gen 4, 12-core CPU (4-mt/8-st) model 13th Gen Intel Core i7-1360P (bits: 64), 32GB RAM, NVIDIA RTX A500 Laptop GPU. The NN model training time is shown in Table \ref{tab:tabella_1_times} where we report the training time as the average over 10 runs together with their standard deviation.

We measure the time spent for training the NN models because it is the only extra computational cost with respect to FSKs. Moreover, we recall that the $\delta$NN-VSK method is trained in a deterministic way, on the whole interpolation data; then, all the available training epochs are used, because no other stopping criteria are considered. On the other hand, the training of the $\delta$NN as scaling function in the VSK-$f$ methods follows the classic NN training rules (see \ref{sec:deltaNN_arch} for details); it is then a stochastic training, and early stopping regularization can activate before the maximum number of epochs is reached (see the cases $f_2$, $n=1521$, and $f_3$, $n=1089$ in Table \ref{tab:tabella_1_times}).

In general, we observe that these training procedures are relatively fast: at most 2 minutes and a half for $\delta$NN-VSKs and at most $5$ minutes for VSKs-$f$. Therefore, the best results obtained using the proposed methods, especially for discontinuous functions, are worth the extra computational cost in terms of time. Concerning the scalability with respect to $n$, the $\delta$NN-VSK method performs better, because it is deterministic and not based on mini-batches (mini-batches may increase the number of inner iteration for epochs in NN training procedures).
}

\begin{table}[htb!]
\centering
\resizebox{1.\textwidth}{!}{
\begin{tabular}{|c|c|cc|cc|}
\hline 
& & \multicolumn{2}{|c|}{Training Time (min:sec)} & \multicolumn{2}{|c|}{Training Epochs (done/max)} \\
\hline 
& $n$ & $\delta$NN-VSKs & VSKs-$f$ & $\delta$NN-VSKs & VSKs-$f$ \\
\hline 
\multirow{3}{*}{$f_1$} 
& 729 & 01:04.28 $\pm$ 0.30s & 02:41.68 $\pm$ 0.79s & 2000/2000 & 1000/1000 \\
& 1089 & 01:36.96 $\pm$ 0.22s & 03:47.06 $\pm$ 0.98s & 2000/2000 & 1000/1000 \\
& 1521 & 02:19.92 $\pm$ 0.26s & 05:06.02 $\pm$ 0.99s & 2000/2000 & 1000/1000 \\
\hline
\multirow{3}{*}{$f_2$} 
& 729 & 01:09.13 $\pm$ 1.17s & 02:41.77 $\pm$ 0.60s & 2000/2000 & 1000/1000 \\
& 1089 & 01:44.41 $\pm$ 0.32s & 03:46.98 $\pm$ 0.48s & 2000/2000 & 1000/1000 \\
& 1521 & 02:33.06 $\pm$ 0.28s & 04:44.27 $\pm$ 0.63 & 2000/2000 & 928/1000 \\
\hline
\multirow{3}{*}{$f_3$} 
& 729 & 01:06.84 $\pm$ 0.12s & 02:42.33 $\pm$ 0.47s & 2000/2000 & 1000/1000 \\
& 1089 & 01:43.88 $\pm$ 0.25s & 03:34.77 $\pm$ 0.99s & 2000/2000 & 947/1000 \\
& 1521 & 02:32.31 $\pm$ 0.28s & 05:05.35 $\pm$ 0.87s & 2000/2000 & 1000/1000 \\
\hline 
\end{tabular}
}
\caption{{Training times in minutes and seconds for the methods $\delta$NN-VSKs and VSKs-$f$ on the test functions $f_1$, $f_2$, and $f_3$. Training times were computed by running the methods 10 times and computing the average and the standard deviation.}
}
\label{tab:tabella_1_times}
\end{table}

\subsection{{Real-world test case}}\label{sec:acetone}

{
In this subsection, we apply the proposed $\delta$NN-based VSK methods to approximate a discontinuous function obtained from real data. Specifically, we consider the phase transition phenomenon of acetone. For this medium (like other ones), the state of matter is characterized by some parameters, such as the density $\rho$. Nonetheless, from a physical point of view, the density can be described as a function of the temperature $T$ and the pressure $p$; then, we can define the function $f_4$:
\begin{equation*}
	\rho=f_4(T, p)\,.
\end{equation*}

We are interested in approximating $f_4$ because, for a wide range of media, it has been observed via experimental measurements that the density often presents discontinuity interfaces that separate two or more regions of the $(T,p)$ plane, representing different states of matter. The points belonging to these discontinuity interfaces are called equilibrium points for the states of matter they separate. However, phase transitions can happen also continuously, crossing a region of so-called supercritical points of the $(T,p)$ plane. This classification of phase transition phenomena is called Ehrenfest classification \cite{Jaeger1998_PHASETRANSITION}. See Figures \ref{fig:f4_topview_comparison}-(a) and \ref{fig:f4_surface_errors_horizontal}-(a) for a reconstruction of the density function $f_4$ of acetone ($T$ and $p$ are normalized); see Figure \ref{fig:acetone} for an example of experimental measures and a description of the state of matter. Looking at these figures, we observe that the function is characterized by a discontinuity interface that is a sort of nonlinear ``rip''. 
}

\begin{figure}[htb]
    \centering
    \includegraphics[width=0.65\textwidth]{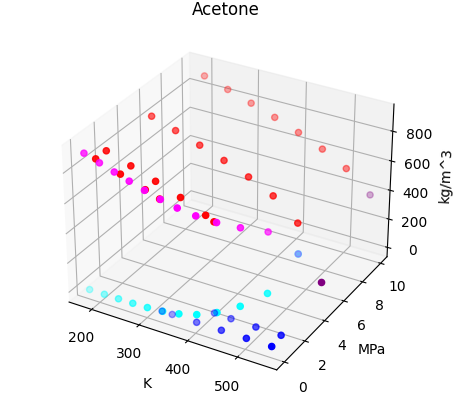}
    \caption{{$(T,p,\rho)$ points of acetone taken from \cite{engtoolbox_PHASETRANSITION_ACETONE}; red dots correspond to liquid state, magenta to liquid state at equilibrium, blue to gas state, cyan to gas state at equilibrium, and purple to supercritical points.}
    }
    \label{fig:acetone}
\end{figure}

{
The discontinuous behavior of $f_4$ makes it an interesting new test function, useful for extending the analysis to a real-world scenario. Then, we approximate the acetone's density function by using FSK, $\delta$NN-VSK, and VSK-$f$ methods, with respect to the same values $n$ of interpolation points used in Section \ref{sintetici}. 

Also for this case, we observe that the $\delta$NN-VSK and VSK-$f$ methods improves on FSKs in the approximation accuracy (see Table \ref{tab:tabella_2_acetone} and Figures \ref{fig:f4_topview_comparison} and \ref{fig:f4_surface_errors_horizontal}). In particular, even if the approximation errors of FSKs are not particularly bad, the VSKs-based methods show very good abilities in correctly approximating the nonlinear rip characterizing the discontinuity. This property is very important for the real-world scenario we are considering.

Concerning the scaling function learned via $\delta$NN, we observe similar behaviors to the ones observed in Section \ref{sintetici}. The $\delta$NN-VSK method learns a nearly-optimal scaling functions that resemble the target in the form of $\bar{f}\approx \gamma f$, with $\gamma <0$. On the other hand, the VSK-$f$ method continues to support the theoretical claims of the safety in having $\bar{f}\approx f$; refer to the bottom rows of Figure \ref{fig:f4_topview_comparison}.
}

\begin{table}[htb]
\centering
\resizebox{1.\textwidth}{!}{
\begin{tabular}{|c|ccc|ccc|ccc|ccc|}
\hline 
& \multicolumn{3}{c|}{Interpolation} & \multicolumn{3}{c|}{MAE} & \multicolumn{3}{c|}{MSE} & \multicolumn{3}{c|}{SSIM} \\
\hline 
& $n$ & Kernel & $\varepsilon$ & FSKs & $\delta$NN-VSKs & VSKs-$f$ & FSKs & $\delta$NN-VSKs & FSKs & FSKs & $\delta$NN-VSKs & VSKs-$f$  \\
\hline
\multirow{3}{*}{$f_4$} 
& 729 & Mat.$C^2$ & 0.06 & 7.16\rm{e}-2 & 1.03\rm{e}-2 & 9.23\rm{e}-3 & 1.38\rm{e}-2 & 3.34\rm{e}-3 & 3.26\rm{e}-3 &
0.9000 & 0.9777 & 0.9853 \\
& 1089 & Mat.$C^2$ & 0.12 & 4.94\rm{e}-2 & 7.98\rm{e}-3 & 8.29\rm{e}-3 & 1.05\rm{e}-2 & 3.45\rm{e}-3 & 3.35\rm{e}-3 &
0.9158 & 0.9852 & 0.9902 \\
& 1521 & Mat.$C^2$ & 0.48 & 2.54\rm{e}-2 & 8.10\rm{e}-3 & 7.22\rm{e}-3 & 6.06\rm{e}-3& 3.43\rm{e}-3 & 2.84\rm{e}-3 &
0.9251 & 0.9856 & 0.9883 \\
\hline 
\end{tabular}
}
\caption{{MAEs, MSEs, and SSIMs for the test function $f_4$. }
}
\label{tab:tabella_2_acetone}
\end{table}

\begin{figure}[htbp!]
    \centering
    \subcaptionbox{Target}{
    \includegraphics[trim={2.3cm 1.6cm 2.45cm 1.9cm},clip,width=0.38\textwidth]{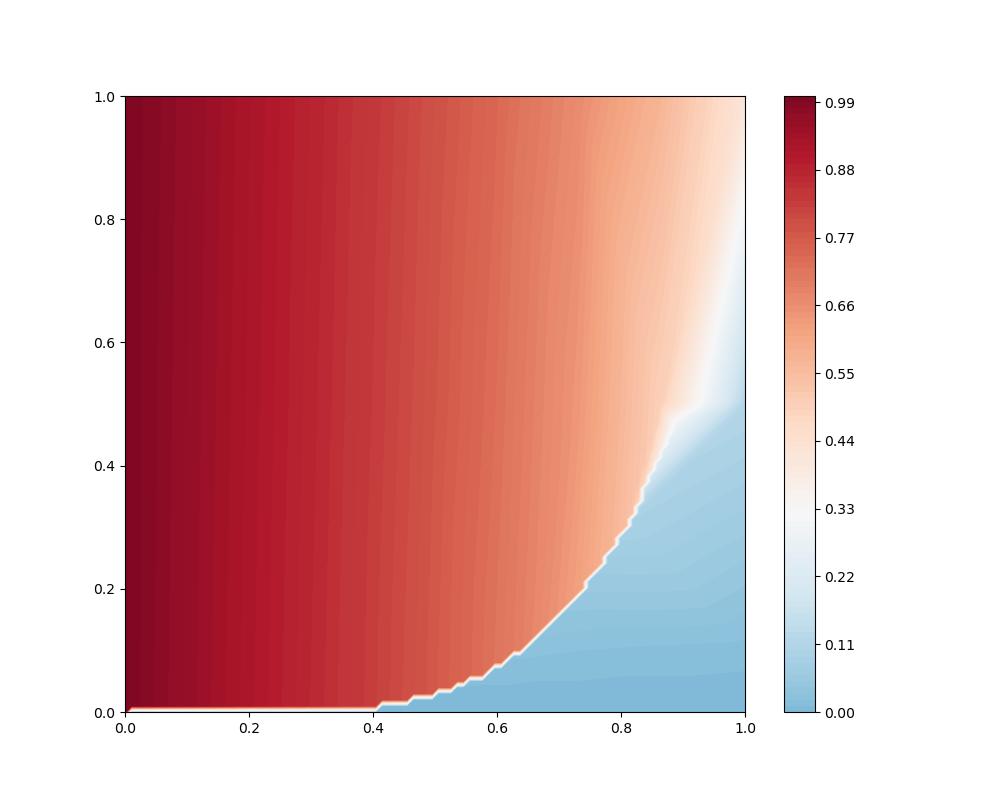}
    }
    \subcaptionbox{FSKs}{
    \includegraphics[trim={2.3cm 1.6cm 2.45cm 1.9cm},clip,width=0.38\textwidth]{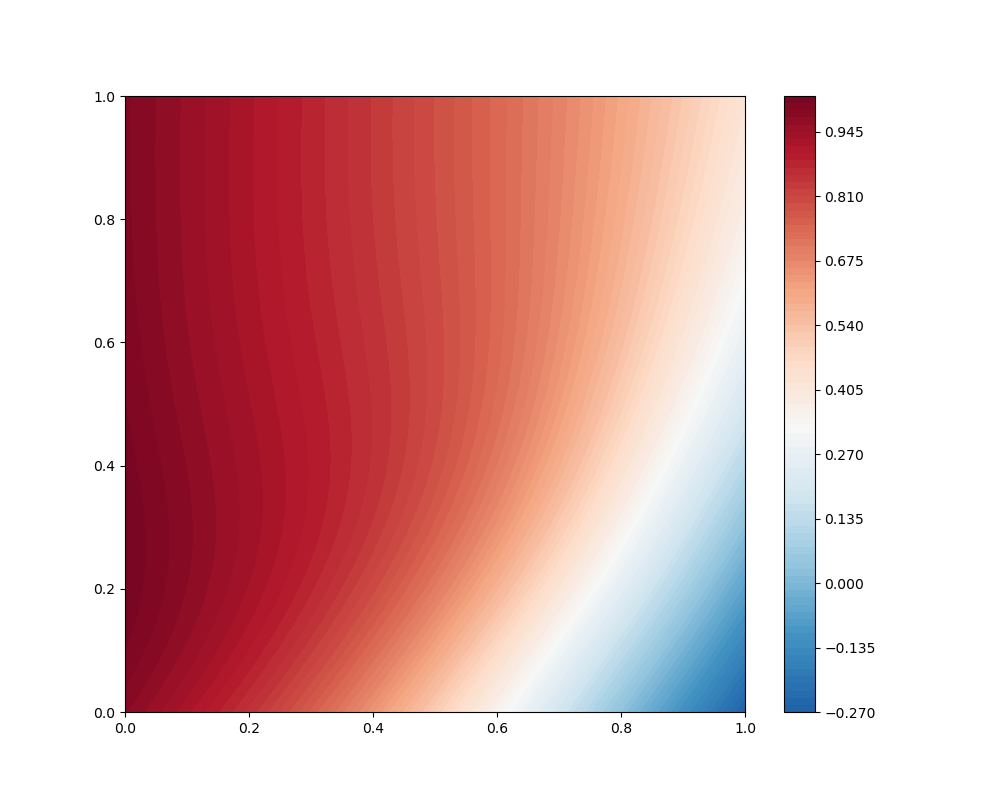}
    }
    \\
    \subcaptionbox{$\delta$NN-VSKs}{
    \includegraphics[trim={2.3cm 1.6cm 2.45cm 1.9cm},clip,width=0.38\textwidth]{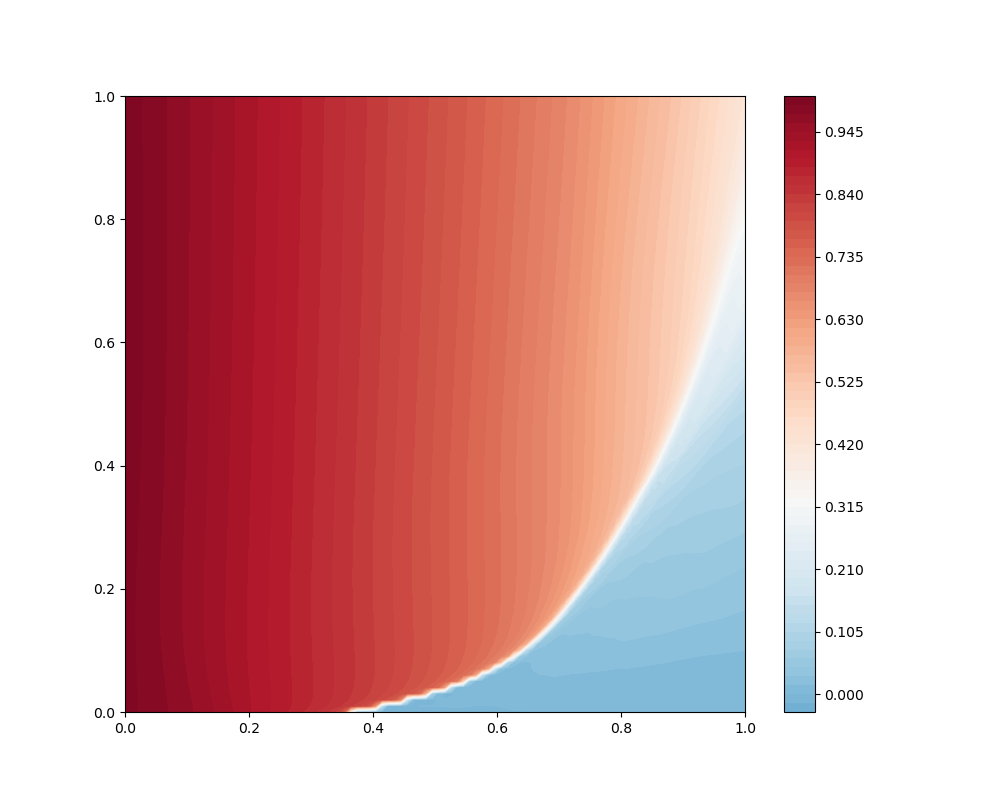}
    }
    \subcaptionbox{VSKs-$f$}{
    \includegraphics[trim={2.3cm 1.6cm 2.45cm 1.9cm},clip,width=0.38\textwidth]{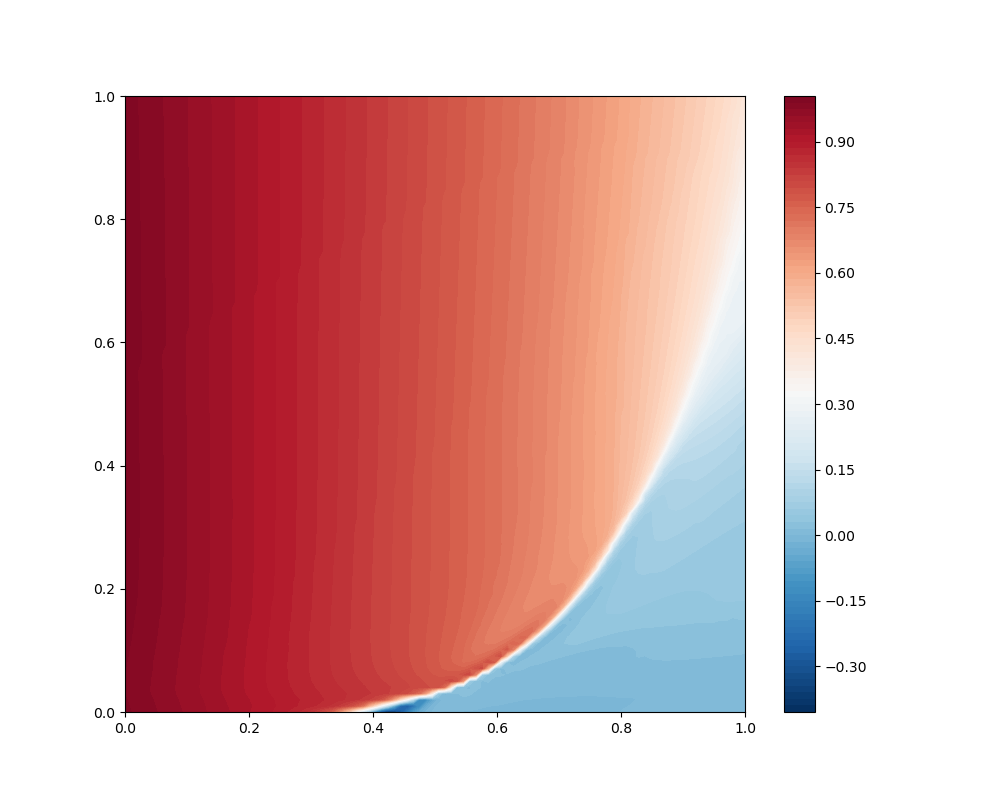}
    }
    \\
    \subcaptionbox{$\delta$NN-VSKs,\\ scaling func.}{
    \includegraphics[trim={2.3cm 1.6cm 2.45cm 1.9cm},clip,width=0.38\textwidth]{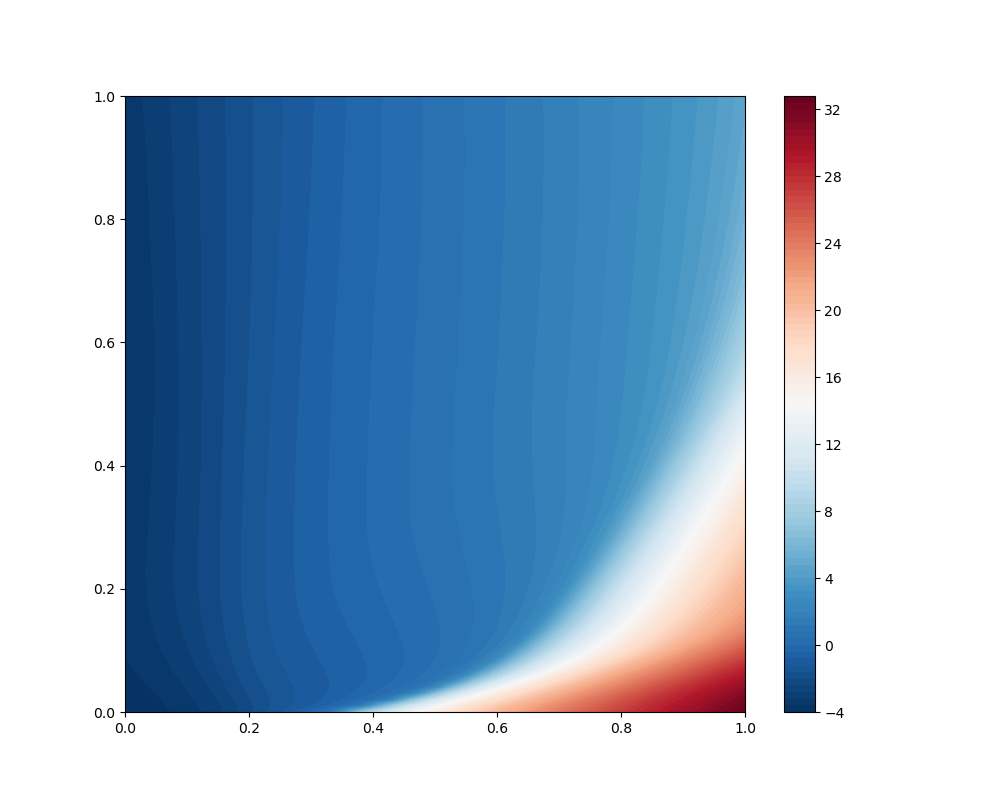}
    }
    \subcaptionbox{VSKs-$f$,\\ scaling func.}{
    \includegraphics[trim={2.3cm 1.6cm 2.45cm 1.9cm},clip,width=0.38\textwidth]{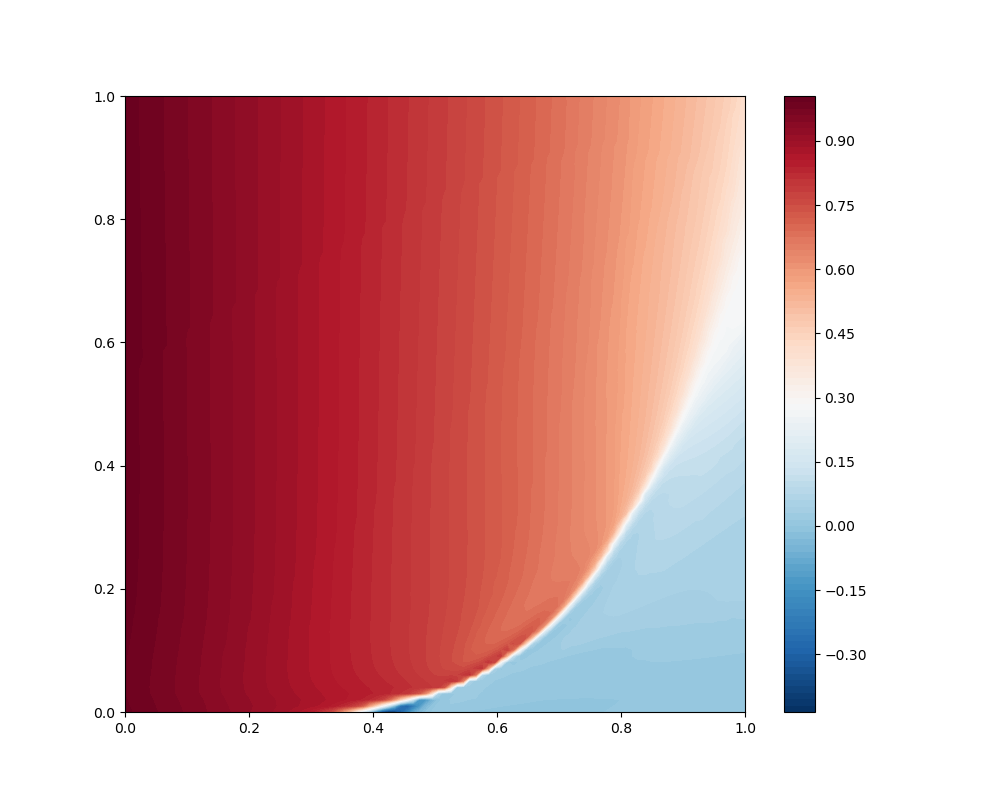}
    }
    \caption{{Interpolation results for $f_4$. Case $n=33^2=1089$ interpolation points, Mat\'ern-$C^2$ Kernel. Shared color bars for subfigures ($a$)-($d$); custom color bar for the scaling functions (subfigures ($e$) and ($f$)).}}
    \label{fig:f4_topview_comparison}
\end{figure}

 \begin{figure}[htbp!]
     \centering
     \subcaptionbox{Target}{
     \includegraphics[trim={2.3cm 1.6cm 2.45cm 1.9cm},clip,width=0.38\textwidth]{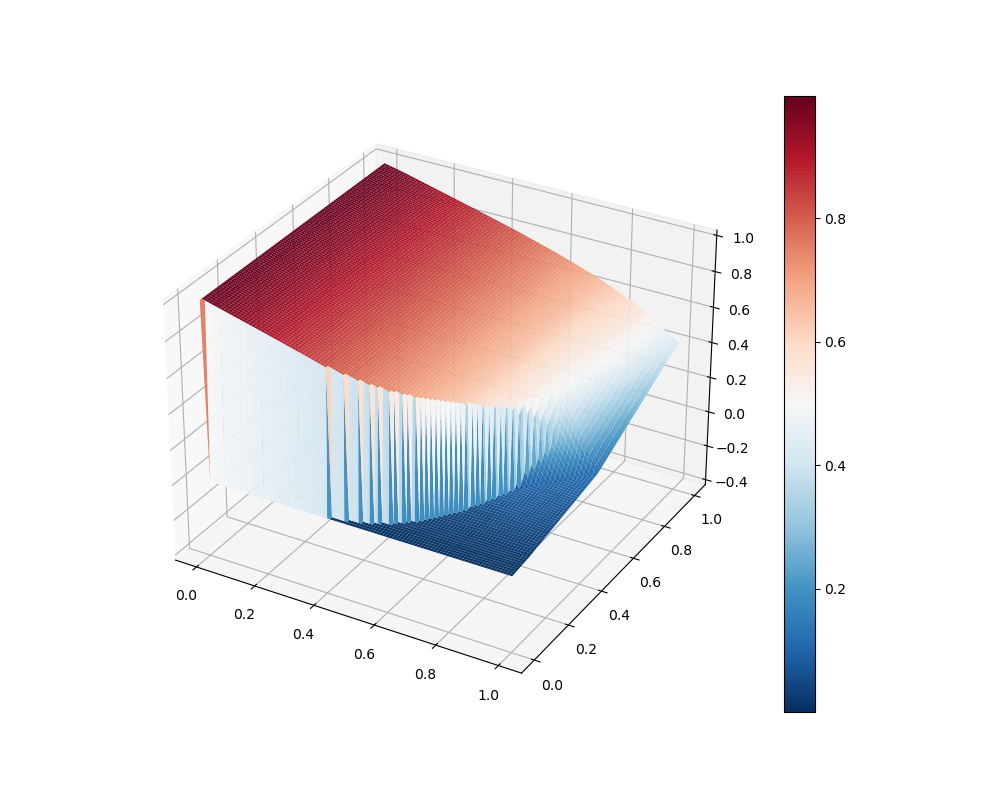}
     }
     \\
          \subcaptionbox{FSKs}{
     \includegraphics[trim={2.3cm 1.6cm 5.55cm 1.9cm},clip,width=0.28\textwidth]{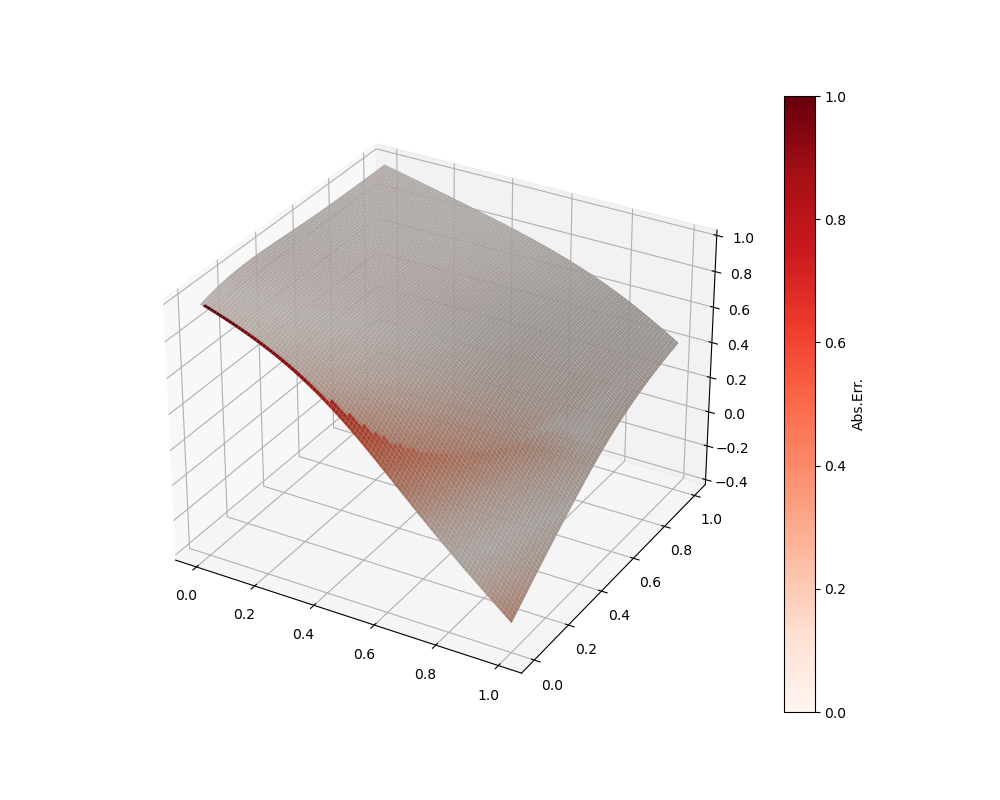}
     }
     \subcaptionbox{$\delta$NN-VSKs}{
     \includegraphics[trim={2.3cm 1.6cm 5.55cm 1.9cm},clip,width=0.28\textwidth]{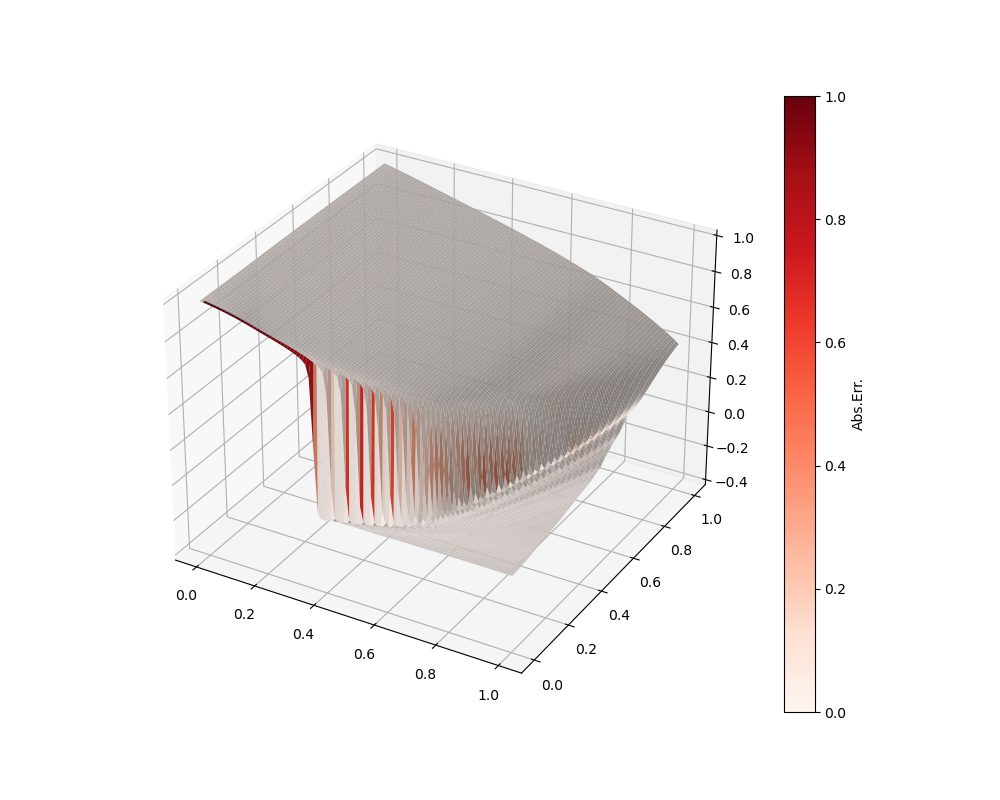}
     }
     \subcaptionbox{VSKs-$f$}{
     \includegraphics[trim={2.3cm 1.6cm 2.45cm 1.9cm},clip,width=0.325\textwidth]{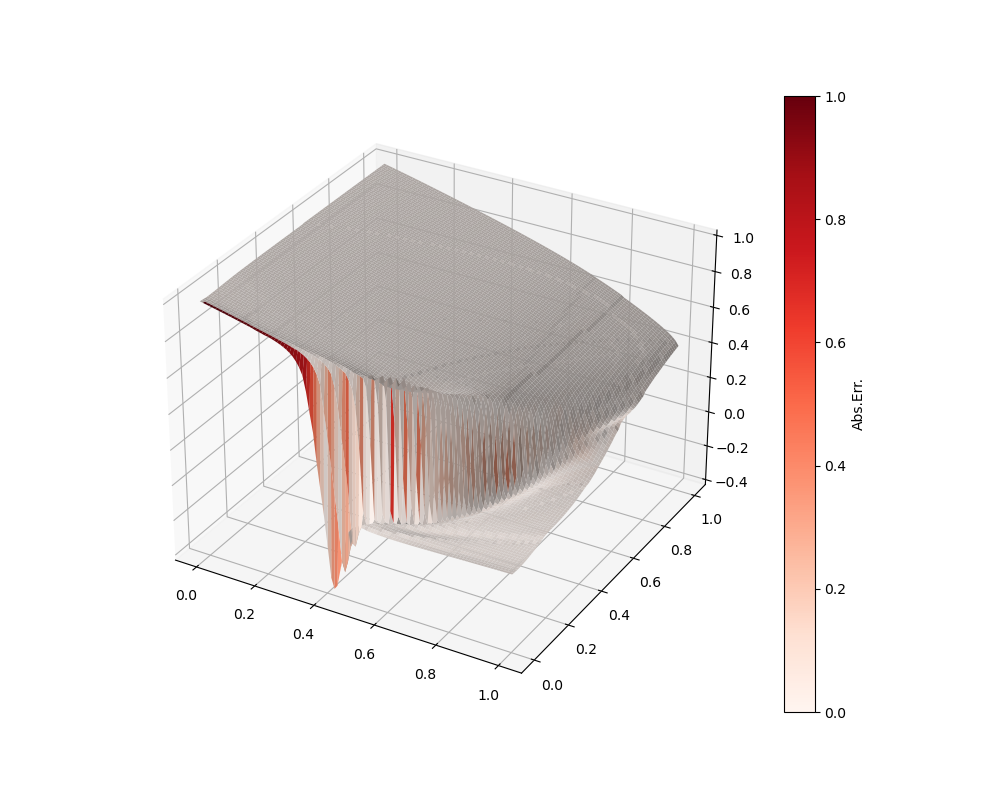}
     }\\
     \subcaptionbox{FSKs}{
     \includegraphics[trim={2.3cm 1.6cm 5.55cm 1.9cm},clip,width=0.28\textwidth]{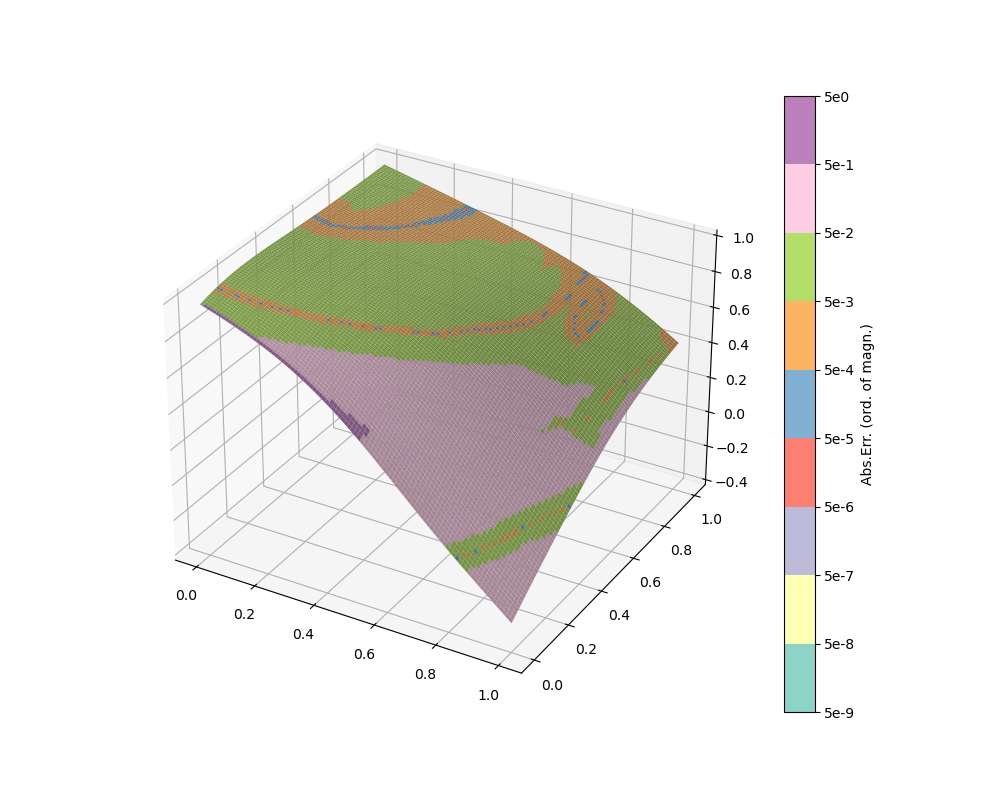}
     }
     \subcaptionbox{$\delta$NN-VSKs}{
     \includegraphics[trim={2.3cm 1.6cm 5.55cm 1.9cm},clip,width=0.28\textwidth]{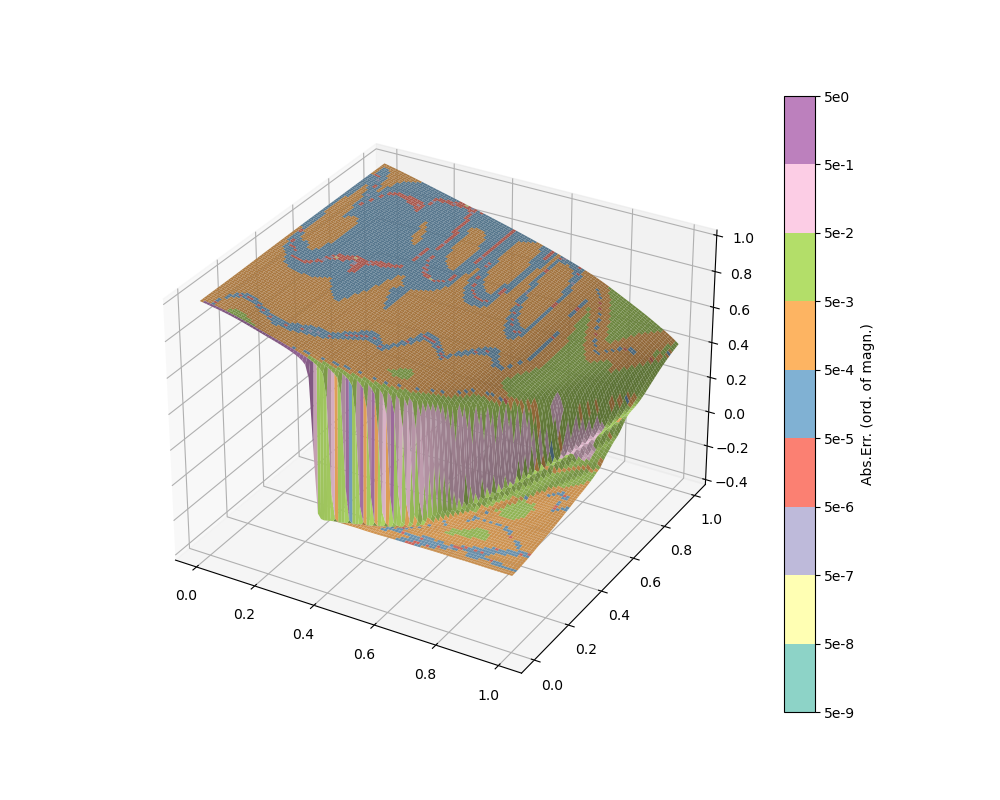}
     }
     \subcaptionbox{VSKs-$f$}{
     \includegraphics[trim={2.3cm 1.6cm 2.45cm 1.9cm},clip,width=0.325\textwidth]{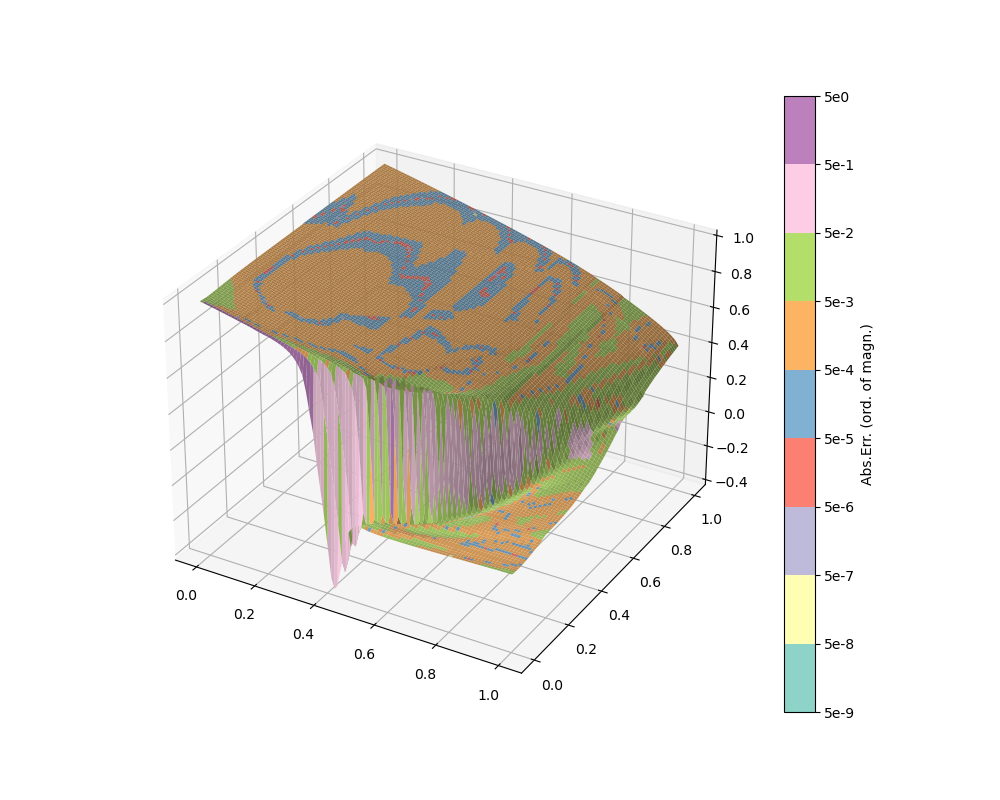}
     }
     \caption{{Interpolation results for $f_4$ (see Figure \ref{fig:f4_topview_comparison}), colored w.r.t. interpolation error (absolute error). In the bottom row, colors describe the order of magnitude of the absolute error. Shared colorscales for subfigures ($b$)-($d$) and for subfigures ($e$)-($g$).}}
     \label{fig:f4_surface_errors_horizontal}
 \end{figure}

\section{Conclusions}
\label{Conclusion}

In this work, by taking advantage of the cardinal form of the VSK interpolant, we introduce new error bounds that relate the accuracy of the VSK interpolant to the definition of the associated scaling function. These bounds point out that the closer the scaling function is to the target, the smaller the pointwise error. We further propose a way to learn the scaling function using $\delta$NNs, and such a data-driven approach offers a user-independent solution for enhancing the flexibility of VSKs in meshfree approximation tasks and numerically shows that the learned scaling function captures the key features of the target, as expected. 

The numerical tests show that our two possible implementations of the VSKs, namely the $\delta$NN-VSKs and the VSKs-$f$, behave better than the classical kernel interpolant. 
The main advantage of the VSKs-$f$ lies in its easy implementation because it directly learns the target which is then used as a scaling function. Nevertheless, the justification for using such a scheme is subordinated to the theoretical results and to the fact that $\delta$NN-VSKs, which are able to learn the nearly-optimal scaling function without any constraints, return scaling functions that truly resemble the targets.

In future work, we plan to learn simultaneously both the scaling function for VSKs and the \emph{optimal} sampling locations producing in that way sparse models (see e.g. \cite{Santin23,Camattari,marchi2005near}). Moreover, the same data-driven learning algorithm could be extended to the context of kernel collocation models for approximating PDEs, with a particular focus on the most local promising techniques as RBF-FD (refer to \cite{Fornberg_Flyer_2015} for a general overview).

\appendix

{
\section{$\delta$NN Architecture in the Numerical Experiments}\label{sec:deltaNN_arch}

In this section, we list the details about the $\delta$NN architecture and the training options used.

\quad

\noindent \textbf{Architecture:}
\begin{enumerate}
    \item Input layer ($\R^2$ inputs);
    \item Two FC layers, 128 units, elu activation function;
    \item\label{item:res} One residual block made of:
    \begin{enumerate}
        \item FC layer, 128 units, elu activation function;
        \item FC layer, 128 units, linear activation function;
        \item Sum of the two layers above;
        \item elu activation function;
    \end{enumerate}
    \item Three blocks of layers, each block made of:
    \begin{enumerate}
        \item FC layer, 128 units, elu activation function;
        \item Residual block as described at item \ref{item:res};
        \item Discontinuous layer, 16 units, elu activation function;
    \end{enumerate}
    \item FC layer, 128 units, elu activation function;
    \item FC layer, 1 unit, linear activation function.
\end{enumerate}

\noindent\textbf{Training Options:}
\begin{itemize}
    \item $\delta$NN-VSKs:
    \begin{itemize}
        \item maximum epochs: 2000;
        \item Adam optimizer \cite{Kingma2015_ADAM}, starting learning rate $10^{-4}$, reduce learning rate on plateau (patience 75 epochs, factor 0.5);
    \end{itemize}
    \item VSKs-$f$:
    \begin{itemize}
        \item maximum epochs: 1000;
        \item Validation set: $20\%$ of the $n$ points;
        \item Mini-batch size: 32;
        \item Adam optimizer \cite{Kingma2015_ADAM}, starting learning rate $10^{-4}$, reduce learning rate on plateau (patience 75 epochs, factor 0.5);
        \item Early stopping (patience 550 epochs).
    \end{itemize}
\end{itemize}


}

\section*{Acknowledgements}
    Gianluca Audone and Emma Perracchione kindly acknowledge the support of the Fondazione Compagnia di San Paolo within the framework of the Artificial Intelligence Call for Proposals, AIxtreme project (ID Rol: 71708). Emma Perracchione further acknowledges the support of the GOSSIP project (``Greedy Optimal Sampling for Solar Inverse Problems'') – funded by the Ministero dell’Universit\`a e della Ricerca (MUR) -  within the PRIN 2022 program, CUP: E53C24002330001.
    Sandra Pieraccini acknowledges the support of the FaReX project (``Full and Reduced order modelling of coupled systems: focus on non-matching methods and automatic learning'') – funded by the Ministero dell’Universit\`a e della Ricerca – within the PRIN 2022 program (D.D.104 - 02/02/2022). Francesco Della Santa and Sandra Pieraccini acknowledge that this study was carried out within the FAIR-Future Artificial Intelligence Research and received funding from the European Union Next-GenerationEU (PIANO NAZIONALE DI RIPRESA E RESILIENZA (PNRR)–MISSIONE 4 COMPONENTE 2, INVESTIMENTO 1.3---D.D. 1555 11/10/2022, PE00000013). This manuscript reflects only the authors’ views and opinions; neither the European Union nor the European Commission can be considered responsible for them.


\begin{thebibliography}{10}
\expandafter\ifx\csname url\endcsname\relax
  \def\url#1{\texttt{#1}}\fi
\expandafter\ifx\csname urlprefix\endcsname\relax\def\urlprefix{URL }\fi
\expandafter\ifx\csname href\endcsname\relax
  \def\href#1#2{#2} \def\path#1{#1}\fi


\bibitem{Bozzini2015199}
M.~Bozzini, L.~Lenarduzzi, M.~Rossini, R.~Schaback, Interpolation with variably scaled kernels, IMA J. Numer. Anal. 35~(1) (2015) 199--219.

\bibitem{driscoll2002interpolation}
T.~A. Driscoll, B.~Fornberg, Interpolation in the limit of increasingly flat radial basis functions, Comput. Math. Appl. 43~(3-5) (2002) 413--422.

\bibitem{fornberg2004stable}
B.~Fornberg, G.~Wright, Stable computation of multiquadric interpolants for all values of the shape parameter, Comput. Math. Appl. 48~(5-6) (2004) 853--867.

\bibitem{ling2022stochastic}
L.~Ling, F.~Marchetti, A stochastic extended {R}ippa’s algorithm for {LpOCV}, Appl. Math. Letters 129 (2022) 107955.

\bibitem{marchetti2021extension}
F.~Marchetti, The extension of {R}ippa’s algorithm beyond {LOOCV}, Appl. Math. Letters 120 (2021) 107262.

\bibitem{rippa1999algorithm}
S.~Rippa, An algorithm for selecting a good value for the parameter c in radial basis function interpolation, Adv. Comput. Math. 11~(2) (1999) 193--210.

\bibitem{vskmpi}
S.~De~Marchi, W.~Erb, F.~Marchetti, E.~Perracchione, M.~Rossini, Shape-driven interpolation with discontinuous kernels: Error analysis, edge extraction, and applications in magnetic particle imaging, SIAM J. Sci. Comput. 42~(2) (2020) B472--B491.

\bibitem{romani}
L.~Romani, M.~Rossini, D.~Schenone, Edge detection methods based on {RBF} interpolation, J. Comput. Appl. Math. 349 (2019) 532 -- 547.

\bibitem{rossini_drna}
M.~Rossini, Interpolating functions with gradient discontinuities via {V}ariably {S}caled {K}ernels, Dolomites Res. Notes Approx. 17 (2018) 3--14.

\bibitem{Perracchione2023}
E.~Perracchione, F.~Camattari, A.~Volpara, P.~Massa, A.~M. Massone, M.~Piana, Unbiased {CLEAN} for {STIX} in {S}olar {O}rbiter, Astrophys. J. Suppl. S. 268~(2) (2023).

\bibitem{DellaSanta2023}
F.~Della~Santa, S.~Pieraccini, Discontinuous neural networks and discontinuity learning, J. Comput. Appl. Math. 419 (2023).

\bibitem{Fasshauer15}
G.~E. Fasshauer, M.~McCourt, Kernel-based Approximation Methods using {MATLAB}, World scientific, 2015.

\bibitem{Wendland05}
H.~Wendland, Scattered Data Approximation, Cambridge University Press, 2004.

\bibitem{Esfahani202338}
M.~K. Esfahani, S.~De~Marchi, F.~Marchetti, Moving least squares approximation using variably scaled discontinuous weight function, Constr. Math. Anal. 6~(1) (2023) 38--54.

\bibitem{Brutman}
L.~Brutman, On the lebesgue function for polynomial interpolation, SIAM J. Num. Anal. 15~(4) (1978) 694--704.

\bibitem{Kidger20202306}
P.~Kidger, T.~Lyons, Universal approximation with deep narrow networks, Vol. 125, 2020, pp. 2306--2327.

\bibitem{cavorettoEfficient}
R.~Cavoretto, A.~De~Rossi, E.~Perracchione, Efficient computation of partition of unity interpolants through a block-based searching technique, Comput. Math. Appl. 71~(12) (2016) 2568--2584.

\bibitem{Santin23}
T.~Wenzel, G.~Santin, B.~Haasdonk, Analysis of target data-dependent greedy kernel algorithms: Convergence rates for $f$-, $f \cdot p$- and $f/p$-greedy, Constr. Approx. 57~(1) (2023) 305--327.

\bibitem{SSIM_paper_2004}
Z.~Wang, A.~Bovik, H.~Sheikh, E.~Simoncelli, Image quality assessment: from error visibility to structural similarity, IEEE Transactions on Image Processing 13~(4) (2004) 600--612.
\newblock \href {https://doi.org/10.1109/TIP.2003.819861} {\path{doi:10.1109/TIP.2003.819861}}.

\bibitem{MarchettiSSIM}
F.~Marchetti, G.~Santin, Convergence results in image interpolation with the continuous ssim, SIAM J. Imag. Sci. 15~(4) (2022) 1977--1999.

\bibitem{SSIM_TF}
tf.image.ssim, tensorflow, \url{https://www.tensorflow.org/api_docs/python/tf/image/ssim}, accessed: 2024-09-20.

\bibitem{cavoretto2021}
R.~Cavoretto, A.~De~Rossi, M.~Mukhametzhanov, Y.~Sergeyev, On the search of the shape parameter in radial basis functions using univariate global optimization methods, J. Glob. Optim. 39~(9) (2021) 305--327.

\bibitem{Wenzel}
T.~Wenzel, F.~Marchetti, E.~Perracchione, Data-driven kernel designs for optimized greedy schemes: {A} machine learning perspective, SIAM J. Sci. Comput. 46~(1) (2024) C101--C126.

\bibitem{Jaeger1998_PHASETRANSITION}
G.~Jaeger, {The Ehrenfest Classification of Phase Transitions: Introduction and Evolution}, Archive for History of Exact Sciences 53 (1998) 51--81.
\newblock \href {https://doi.org/10.1007/s004070050021} {\path{doi:10.1007/s004070050021}}.

\bibitem{engtoolbox_PHASETRANSITION_ACETONE}
Acetone - density and specific weight, \url{https://www.engineeringtoolbox.com/acetone-2-propanone-density-specific-weight-temperature-pressure-d_2038.html}, accessed on: 2022-03-11.

\bibitem{Camattari}
F.~Camattari, S.~Guastavino, F.~Marchetti, M.~Piana, E.~Perracchione, Classifier-dependent feature selection via greedy methods, Statist. Comput. 34~(151) (2024) 1--12.

\bibitem{marchi2005near}
S.~De~Marchi, R.~Schaback, H.~Wendland, Near-optimal data-independent point locations for radial basis function interpolation, Adv. Comput. Math. 23~(3) (2005) 317--330.

\bibitem{Fornberg_Flyer_2015}
B.~Fornberg, N.~Flyer, Solving {PDE}s with radial basis functions, Acta Numerica 24 (2015) 215–258.

\bibitem{Kingma2015_ADAM}
D.~P. Kingma, J.~L. Ba, {Adam: A method for stochastic optimization}, 3rd International Conference on Learning Representations, ICLR 2015 - Conference Track Proceedings (2015) 1--15\href {http://arxiv.org/abs/1412.6980} {\path{arXiv:1412.6980}}.

\end{thebibliography}
\end{document}